\newtheorem{thm}{}[section]
\newtheorem{theorem}[thm]{Theorem}
\newtheorem{corollary}[thm]{Corollary}
\newtheorem{lemma}[thm]{Lemma}
\newtheorem{proposition}[thm]{Proposition}
\theoremstyle{definition}
\newtheorem{definition}[thm]{Definition}
\theoremstyle{remark}
\newtheorem{remark}[thm]{Remark}
\newtheorem{question}[thm]{Question}
\newtheorem{example}[thm]{Example}
\numberwithin{equation}{section}
\newcommand{\tl}{\ensuremath{{\bm{t}}}}
\newcommand{\UU}{\ensuremath{U}}
\newcommand{\Ind}{\ensuremath{\mathbf{1}}}
\newcommand{\Fou}{\ensuremath{\mathcal{F}}}
\newcommand{\TT}{\ensuremath{\mathbb{T}}}
\newcommand{\uu}{\ensuremath{\bm u}}
\newcommand{\vv}{\ensuremath{\bm v}}
\newcommand{\NN}{\ensuremath{\mathbb{N}}}
\newcommand{\Nt}{\ensuremath{\mathcal{N}}}
\newcommand{\Mt}{\ensuremath{\mathcal{M}}}
\newcommand{\Ts}{\ensuremath{\mathcal{T}}}
\newcommand{\ZZ}{\ensuremath{\mathbb{Z}}}
\newcommand{\RR}{\ensuremath{\mathbb{R}}}
\newcommand{\FF}{\ensuremath{\mathbb{F}}}
\newcommand{\XX}{\ensuremath{{X}}}
\newcommand{\YY}{\ensuremath{{Y}}}
\newcommand{\xx}{\ensuremath{\bm x}}
\newcommand{\yy}{\ensuremath{\bm y}}
\newcommand{\ee}{\ensuremath{\bm e}}
\newcommand{\ww}{\ensuremath{\bm w}}
\newcommand{\EE}{\ensuremath{\mathcal{E}}}
\newcommand{\BB}{\ensuremath{\mathcal{B}}}
\newcommand{\Id}{\ensuremath{\mathrm{Id}}}
\newcommand{\supp}{\operatorname{supp}}
\begin{document}
%------------------------------------------------------------------------

\title[Permutative equivalence of squares of unconditional bases]{On the permutative equivalence of squares of unconditional bases}
\author[F. Albiac]{F. Albiac}
\address{Department of Mathematics, Statistics and Computer Sciences, and InaMat\\ Universidad P\'ublica de Navarra\\
Pamplona 31006\\ Spain} 
\email{fernando.albiac@unavarra.es}

\author[J. L. Ansorena]{J. L. Ansorena}
\address{Department of Mathematics and Computer Sciences\\
Universidad de La Rioja\\
Logro\~no 26004\\ Spain} 
\email{joseluis.ansorena@unirioja.es}

\subjclass[2010]{46B15, 46B20, 46B42, 46B45, 46A16, 46A35, 46A40, 46A45}

\keywords{uniqueness, unconditional basis, equivalence of bases, quasi-Banach space, Banach lattice}

%-----------------------
\begin{abstract}We prove that if the squares of two unconditional bases are equivalent up to a permutation, then the bases themselves are permutatively equivalent. This settles a twenty year-old question raised by Casazza and Kalton in \cite{CasKal1998}. Solving this problem provides a new paradigm to study the uniqueness of unconditional basis in the general framework of quasi-Banach spaces. Multiple examples are given to illustrate how to put in practice this theoretical scheme. Among the main applications of this principle we obtain the uniqueness of unconditional basis up to permutation of finite sums of quasi-Banach spaces with this property.
\end{abstract}
%-----------------------

\thanks{Both authors supported by the Spanish Ministry for Science, Innovation, and Universities, Grant PGC2018-095366-B-I00 for \textit{An\'alisis Vectorial, Multilineal y Approximaci\'on}. The first-named author also acknowledges the support from Spanish Ministry for Economy and Competitivity, Grant MTM2016-76808-P for \textit{Operators, lattices, and structure of Banach spaces}.}

%-----------------------
\maketitle
%-----------------------

%-----------------------
\section{Introduction and background}
\noindent An important long-standing problem in Banach space theory, eventually solved in the negative by Gowers and Maurey in 1997 \cite{GowMau1997}, asked whether any two Banach spaces $X$ and $Y$ such that $X$ is isomorphic to a complemented subspace of $Y$ and such that $Y$ is isomorphic to a complemented subspace of $X$ are isomorphic. This is known, by analogy with a similar result for cardinals in the category of sets, as the \textit{Schr\"oder-Bernstein} problem for Banach spaces.

Pe{\l}czy\'{n}ski had noticed much earlier, back in 1969, that a little extra information about each space, namely being isomorphic to their squares, is all that is needed for the Schr\"oder-Bernstein problem for Banach spaces to have a positive outcome \cite{Pel1969}. This observation, nowadays known as \textit{Pe{\l}czy\'{n}ski's decomposition method}, highlighted the role played by the squares of the spaces, and the question arose whether any two Banach spaces $X$ and $Y$ such that $X^{2}\approx Y^{2}$ are isomorphic. This problem was also settled in the aforementioned article by Gowers and Maurey. 
Indeed, the authors constructed in \cite{GowMau1997} a Banach space $\XX$ with $\XX\approx\XX^3$ but $\XX\not\approx\XX^2$. Then, if we put $\YY=\XX^2$, we have that $\XX$ is isomorphic to a complemented subspace of $\YY$, that $\YY$ is isomorphic to a subspace of $\XX$, that $\XX^2\approx \YY^2$, and that $\XX\not\approx\YY$. So, the pair of spaces $X$ and $Y$ serves as a counterexample for both questions.

The Schr\"oder-Bernstein problem for Banach spaces is a very basic and natural property that arises most of the time when one is trying to show that two Banach (or quasi-Banach) spaces are isomorphic. However, its practical implementation
depends on knowing a priori large classes of spaces when the property holds. And this might be an intractable problem in almost any general setting. 

W\'ojtowicz \cite{Wojto1988} and Wojtaszczyk \cite{Woj1997} discovered independently, with a lapse of 11 years, the following beautiful criterion in the spirit of the Schr\"oder-Bernstein problem to check whether two unconditional bases (in possibly different quasi-Banach spaces) are permutatively equivalent. 

\begin{theorem}[see \cite{Woj1997}*{Proposition 2.11} and \cite{Wojto1988}*{Corollary 1}]\label{thm:SBUB} Let $(\xx_{n})_{n=1}^{\infty}$ and $(\yy_{n})_{n=1}^{\infty}$ be two unconditional bases of quasi-Banach spaces $X$ and $\YY$. Suppose that $(\xx_{n})_{n=1}^{\infty}$ is permutatively equivalent to a subbasis of $(\yy_{n})_{n=1}^{\infty}$ and that $(\yy_{n})_{n=1}^{\infty}$ permutatively is equivalent to a subbasis of $(\xx_{n})_{n=1}^{\infty}$. Then $(\xx_{n})_{n=1}^{\infty}$ and $(\yy_{n})_{n=1}^{\infty}$ are permutatively equivalent. In particular, $X\approx Y$.
\end{theorem}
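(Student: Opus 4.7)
My plan is to carry out the classical Cantor--Schr\"oder--Bernstein construction at the level of the index set $\NN$. The hypothesis furnishes injections $\pi_1,\pi_2\colon\NN\to\NN$ and a constant $C\geq 1$ such that $(\xx_n)_{n=1}^{\infty}$ is $C$-equivalent to $(\yy_{\pi_1(n)})_{n=1}^{\infty}$ and $(\yy_n)_{n=1}^{\infty}$ is $C$-equivalent to $(\xx_{\pi_2(n)})_{n=1}^{\infty}$.

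The first move is purely combinatorial: the set-theoretic Cantor--Bernstein theorem applied to the pair $(\pi_1,\pi_2)$ yields partitions $\NN=M_1\sqcup M_2$ and $\NN=N_1\sqcup N_2$ (on the $\xx$ and $\yy$ sides, respectively) such that $\pi_1$ restricts to a bijection $M_1\to N_1$ and $\pi_2$ restricts to a bijection $N_2\to M_2$. Splicing these restrictions produces a permutation $\tau$ of $\NN$ defined by $\tau(n)=\pi_1(n)$ for $n\in M_1$ and $\tau(n)=\pi_2^{-1}(n)$ for $n\in M_2$. It then suffices to prove $(\xx_n)_{n=1}^{\infty}\sim(\yy_{\tau(n)})_{n=1}^{\infty}$, which is exactly the desired permutative equivalence.

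To establish this global equivalence, I would first exploit the elementary fact that any equivalence of basic sequences restricts, with the same constants, to an equivalence of any pair of corresponding subsequences, since only finitely supported test scalars enter the definition. Restriction of the first hypothesized equivalence to $M_1$ gives $(\xx_n)_{n\in M_1}\sim(\yy_{\tau(n)})_{n\in M_1}$, and restriction of the second to $N_2$, after the change of variable $n=\tau^{-1}(m)$ with $m\in M_2$, gives $(\xx_n)_{n\in M_2}\sim(\yy_{\tau(n)})_{n\in M_2}$, both with constants bounded by $C$. The decisive step is to reassemble these two pieces using unconditionality: given a finitely supported scalar sequence $(a_n)$, the coordinate projection of $\sum_n a_n\xx_n$ onto either block $M_i$ is bounded by the unconditional basis constant of $(\xx_n)$, so, combined with the quasi-triangle inequality in $X$, the global norm is comparable to $\|\sum_{n\in M_1}a_n\xx_n\|+\|\sum_{n\in M_2}a_n\xx_n\|$ up to a factor depending only on this constant and the modulus of concavity of $X$. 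The analogous comparison holds in $Y$ for $(\yy_{\tau(n)})$. Feeding the two piecewise equivalences into these comparisons yields the global equivalence, and $X\approx Y$ follows immediately.

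The main obstacle I anticipate is administrative rather than conceptual: in the quasi-Banach setting one must keep careful track of the modulus of concavity when invoking the quasi-triangle inequality, and one needs unconditionality of both bases to control both directions of the comparison between the global norm and the sum of the two piecewise norms. Because only two blocks are being glued together, no iteration of the quasi-triangle inequality is required and all constants remain uniformly controlled in $(a_n)$.
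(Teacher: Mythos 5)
The paper does not actually prove Theorem~\ref{thm:SBUB} --- it is imported from Wojtaszczyk and W\'ojtowicz --- so there is no internal proof to compare against; your argument is nevertheless correct and is essentially the standard proof behind the cited result: the set-theoretic Cantor--Schr\"oder--Bernstein theorem applied to the two index injections yields the two-block splicing $\tau$, the hypothesized equivalences restrict to the corresponding blocks with the same constants, and unconditionality of both bases together with a single application of the quasi-triangle inequality glues the two piecewise equivalences into a global one. Since only two blocks are glued, all constants remain uniform, and the resulting equivalence of the full bases immediately gives $X\approx Y$.
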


The validity of the Schr\"oder-Bernstein principle for unconditional bases has a played a crucial role in the development of the subject of uniqueness of unconditional basis in quasi-Banach spaces (see, e.g., \cites{AKL2004, AlbiacLeranoz2008, AlbiacLeranoz2010, AlbiacLeranoz2011, AlbiacLeranoz2011b}). Casazza and Kalton brought this principle to the reader's awareness in \cite{CasKal1998} and used it to give new examples of Banach spaces with a unique unconditional basis up to permutation. The simplifying power of the Schr\"oder-Bernstein principle for unconditional bases would have made life much easier also for all the authors who had previously worked on the problem of uniqueness of unconditional bases up to permutation and who, in order to obtain the same conclusions, had to impose additional properties to the bases in relation to other general techniques such as the decomposition method (see e.g. \cite{BCLT1985}*{Proposition 7.7}). It is indeed remarkable that, although the combinatorial arguments used by Wojtaszczyk to prove Theorem~\ref{thm:SBUB} are somewhat standard, they went unnoticed until close to the 21st century!

The state of art of the Schr\"oder-Bernstein problem for Banach spaces in the pre-Gowers era was described by Casazza in \cite{Casazza1989}. His paper with Kalton \cite{CasKal1998} appeared just one year after Gowers and Maurey disproved the Schr\"oder-Bernstein problem for Banach spaces and Wojtaszczyk's reinterpreted the Schr\"oder-Bernstein principle for unconditional bases. Thus, it is not surprising that the following question was timely raised in \cite{CasKal1998}: 

\begin{question}\label{CKQuestion} (See \cite{CasKal1998}*{Remarks following the proof of Theorem 5.7}) Suppose that $(\xx_{n})_{n=1}^{\infty}$ and $(\yy_{n})_{n=1}^{\infty}$ are two unconditional bases whose squares are permutatively equivalent. Does it follow that $(\xx_{n})_{n=1}^{\infty}$ and $(\yy_{n})_{n=1}^{\infty}$ are permutatively equivalent?
\end{question}

This problem was a driving force for the present investigation and we solve it in the affirmative. In fact we show that the result still holds replacing the assumption on the square of the bases with the weaker assumption that some powers of the bases are permutatively equivalent. We will do that in Section~\ref{sect:SQRTUB}. 

Answering Question~\ref{CKQuestion} in the positive offers a new paradigm to tackle the problem of uniqueness of unconditional basis up to permutation in the general setting of quasi-Banach spaces. The necessary ingredients and preparatory results leading to the main theoretical tool, namely Theorem~\ref{thm:keytechniquebis}, are presented in a self-contained fashion in Section~\ref{sect:UUB}. 

In Sections \ref{Sec:ApplAnti-Euclidean} and \ref{SectStrongAbs} we embark on a comprehensive survey of quasi-Banach spaces with a unique unconditional basis up to permutation which are susceptible to be applied the scheme of Section~\ref{sect:UUB}. 

In Section~\ref{DirectSumsUnc} we further exploit the usefulness of Theorem~\ref{thm:keytechniquebis} to show that the property of uniqueness of unconditional bases is preserved when we take finite direct sums of a wide class of quasi-Banach spaces with this property. When combined with the spaces from Sections \ref{Sec:ApplAnti-Euclidean} and \ref{SectStrongAbs} we obtain a myriad of new examples of spaces with uniqueness of unconditional basis up to permutation. 

We use standard terminology and notation in Banach space theory as can be found, e.g., in \cites{AlbiacKalton2016}. Most of our results, however, will be established in the general setting of quasi-Banach spaces; the unfamiliar reader will find general information about quasi-Banach spaces in \cite{KPR1984}. We next gather the notation that it is more heavily used. In keeping with current usage we will write $c_{00}(J)$ for the set of all $(a_j)_{j\in J}\in \FF^J$ such that $|\{j\in J \colon a_j\not=0\}|<\infty$, where $\FF$ could be the real or complex scalar field. Given $s\in\NN$ we put $\NN[s]=\{1, \dots, s\}.$ Given a quasi-Banach space $\XX$ and $s\in\NN$ we denote by $\kappa[s,\XX]$ the smallest constant $C$ such that
\[
\left\Vert \sum_{j=1}^s f_j\right\Vert\le C \left(\sum_{j=1}^s \Vert f_j\Vert\right),\quad f_j\in\XX.
\]
The closed linear span of a subset  $V$ of $\XX$ will be denoted by $[V]$. A countable family $\BB=(\xx_n)_{n \in \Nt}$ in $\XX$ is an \textit{unconditional basic sequence} if for every $f\in[\xx_n \colon n \in \Nt]$ there is a unique family $(a_n)_{n \in \Nt}$ in $\FF$ such that the series $\sum_{n \in \Nt} a_n \, \xx_n$ converges unconditionally to $f$.
If $\BB$ is an unconditional basic sequence, there is a constant $K\ge 1$ such that
\[
\left\Vert \sum_{n \in \Nt} a_n \, \xx_n\right\Vert \le K \left\Vert \sum_{n \in \Nt} b_n \, \xx_n\right\Vert
\]
for any finitely non-zero sequence of scalars $(a_n)_{n\in \mathcal N}$ with $|a_n|\le|b_n|$ for all $n\in\Nt$ (see \cite{AABW2019}*{Theorem 1.10}). If this inequality  is satisfied for a given $K$ we say that $\BB$ is $K$-unconditional. If we additionally have $[\xx_n \colon n \in \Nt]=\XX$ then $\BB$ is an \textit{unconditional basis} of $\XX$. If $\BB$ is an unconditional basis of $\XX$, then the map
\[
\Fou\colon\XX\to \FF^{\Nt},\quad f=\sum_{n \in \Nt} a_n\, \xx_n \mapsto (\xx_n^*(f))_{n \in \Nt} = (a_n)_{n \in \Nt}
\]
will be called the \textit{coefficient transform} with respect to $\BB$, and the functionals $(\xx_n^*)_{n \in \Nt}$ the \textit{coordinate functionals} of $\BB$.

Given a countable set $\Nt$, we write $\EE_{\Nt}:=(\ee_n)_{n\in \Nt}$ for the canonical unit vector system of $\FF^{\Nt}$, i.e., $\ee_n=(\delta_{n,m})_{m\in \Nt}$ for each $n\in \Nt$, where $\delta_{n,m}=1$ if $n=m$ and $\delta_{n,m}=0$ otherwise. A \textit{sequence space} will be a quasi-Banach space $\XX\subseteq\FF^{\Nt}$ for which $\EE_{\Nt}$ is a normalized $1$-unconditional basis.

The \textit{Banach envelope} of a quasi-Banach space $\XX$ consists of a Banach space $\widehat{\XX}$ together with a linear contraction $J_\XX\colon\XX \to \widehat{\XX}$ satisfying the following universal property: for every Banach space $\YY$ and every linear contraction $T\colon\XX \to\YY$ there is a unique linear contraction $\widehat{T}\colon \widehat{\XX}\to \YY$ such that $\widehat{T}\circ J_\XX=T$. We say that a Banach space $\YY$ is the Banach envelope of $\XX$ via the map $J\colon\XX\to\YY$ if the associated map $\widehat{J}\colon\widehat{\XX}\to\YY$ is an isomorphism. 

Other more specific terminology will be introduced in context when needed.

\section{Permutative equivalence of powers of unconditional bases}\label{sect:SQRTUB}
\noindent Suppose that $\BB_x=(\xx_n)_{n \in \Nt}$ and $\BB_u=(\uu_n)_{n \in \Nt}$ are (countable) families of vectors in quasi-Banach spaces $X$, $Y$, respectively. We say that $\BB_x=(\xx_n)_{n \in \Nt}$ $C$-\textit{dominates} $\BB_u=(\uu_n)_{n \in \Nt}$ if there is a linear map $T$ from the closed subspace of $\XX$ spanned by $\BB_x$ into $\YY$ with $T(\xx_n)=\uu_n$ for all $n \in \Nt$ such that $\Vert T\Vert\le C$. If $T$ is an isomorphic embedding, $\BB_x$ and $\BB_u$ are said to be \textit{equivalent}. We say that $\BB_x$ is \textit{permutatively equivalent} to a family $\BB_y=(\yy_m)_{n\in \Mt}$ in $Y$, and we write $\BB_x\sim\BB_y$, if there is a bijection $\pi\colon \Nt\to \Mt$ such that $\BB_x$ and $(\yy_{\pi(n)})_{n \in \Nt}$ are equivalent. A \textit{subbasis} of an unconditional basis $\BB_x=(\xx_n)_{n \in \Nt}$ is a family $(\xx_n)_{n\in \Mt}$ for some subset $\Mt$ of $\Nt$.

Let $(\XX_i)_{i\in F}$ be a finite collection of (possibly repeated) quasi-Banach spaces. The Cartesian product $\bigoplus_{i\in F}\XX_i$ equipped with the quasi-norm
\[
\left\Vert (\xx_i)_{i\in F}\right\Vert=\sup_{i\in F} \Vert \xx_i\Vert,\quad \xx_i\in\XX_i
\]
is a quasi-Banach space. Suppose that $\BB_i=(\xx_{i,n})_{j\in \Nt_i}$ is an unconditional basis of $\XX_i$ for each $i\in F$. Set 
\begin{equation}\label{defN}
\Nt= \bigcup_{i\in F} \{i\} \times \Nt_i.\end{equation}
Then the countable sequence
$
\bigoplus_{i\in F} \BB_i :=(\xx_{i,n})_{(i,n)\in \Nt}
$
given by $\xx_{i,n} =(\xx_{i,n,j})_{j\in F}$, where
\[
\xx_{i,n,j}=\begin{cases} \xx_{i,n}& \text{ if }i=j, \\ 0 & \text{ otherwise,}
\end{cases}
\]
is an unconditional basis of $\bigoplus_{i\in F} \XX_i$. If $F=\NN[s]$ and $\XX_i=\XX$ for all $i\in F$, the resulting direct sum is called the \textit{$s$-fold product} of $\XX$ and we simply write $\XX^s=\bigoplus_{i\in F} \XX_i$. Similarly, if $\BB_i=\BB$ for all $i\in F=\NN[s]$, we put $\BB^s=\bigoplus_{i\in F} \BB_i$ and say that $\BB^s$ is the $s$-fold product of $\BB$. We will refer to the $2$-fold product of a basis as to the \textit{square} of that basis. We start with an elementary lemma.

\begin{lemma}\label{lem:1} Let $\BB=(\xx_n)_{n\in \Nt}$ be an unconditional basis of a quasi-Banach space $\XX$. For a given $s\in\NN$, consider the $s$-fold product $\BB^s=(\xx_{i,n})_{(i,n)\in \NN[s]\times \Nt}$. Then, for any function $\alpha\colon \Nt\to\NN[s]$, the basic sequence $(\xx_{\alpha(n),n})_{n\in \Nt}$ (which is permutatively equivalent to a subbasis of $\BB^s$) is equivalent to $\BB$.
\end{lemma}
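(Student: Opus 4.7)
The plan is to reduce everything to the identity that expresses the norm on the direct sum $\XX^s$ in terms of the norm on $\XX$. Setting $A_i=\alpha^{-1}(i)$ for $1\le i\le s$, the sets $\{A_1,\dots,A_s\}$ form a partition of $\Nt$. For any finitely supported family of scalars $(a_n)_{n\in\Nt}$, the vector $\sum_n a_n\, \xx_{\alpha(n),n}\in\XX^s$ has $i$-th coordinate $\sum_{n\in A_i}a_n\,\xx_n\in\XX$, so by the definition of the $\ell_\infty$-sum quasi-norm on $\XX^s$,
\[
\left\Vert \sum_{n\in\Nt} a_n\, \xx_{\alpha(n),n}\right\Vert_{\XX^s}=\max_{1\le i\le s}\left\Vert \sum_{n\in A_i} a_n\, \xx_n\right\Vert_\XX.
\]
This reformulation is the whole geometric content of the lemma; the rest is comparing the right-hand side with $\|\sum_n a_n\,\xx_n\|_\XX$.

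For the upper estimate, I would invoke the $K$-unconditionality of $\BB$: since the restricted sequence $(a_n\Ind_{A_i}(n))_n$ is pointwise dominated in modulus by $(a_n)_n$, we obtain $\|\sum_{n\in A_i}a_n\,\xx_n\|\le K\|\sum_n a_n\,\xx_n\|$ uniformly in $i$, hence $\|\sum_n a_n\,\xx_{\alpha(n),n}\|_{\XX^s}\le K\|\sum_n a_n\,\xx_n\|_\XX$. For the reverse estimate, I would decompose $\sum_n a_n\,\xx_n=\sum_{i=1}^s\sum_{n\in A_i}a_n\,\xx_n$ and apply the $s$-term quasi-triangle inequality furnished by the constant $\kappa[s,\XX]$, getting
\[
\left\Vert \sum_{n\in\Nt} a_n\,\xx_n\right\Vert_\XX\le \kappa[s,\XX]\sum_{i=1}^{s}\left\Vert \sum_{n\in A_i}a_n\,\xx_n\right\Vert_\XX\le s\,\kappa[s,\XX]\left\Vert \sum_{n\in\Nt}a_n\,\xx_{\alpha(n),n}\right\Vert_{\XX^s}.
\]
The two bounds together establish the equivalence of $(\xx_{\alpha(n),n})_{n\in\Nt}$ with $\BB$, with explicit constants depending only on $K$, $s$, and $\kappa[s,\XX]$. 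Since everything reduces to unfolding definitions, I do not anticipate any serious obstacle; the only point requiring minimal care is that, in the quasi-Banach regime, the triangle inequality carries the extra factor $\kappa[s,\XX]$, but this is precisely the constant introduced in the preliminaries for this purpose.
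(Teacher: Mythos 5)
Your proof is correct and follows essentially the same route as the paper's: the same partition $\alpha^{-1}(i)$, the same identity expressing the sup-norm on $\XX^s$, unconditionality for the upper bound, and the $\kappa[s,\XX]$ quasi-triangle inequality for the lower bound. Your constant $s\,\kappa[s,\XX]$ is in fact slightly more careful than the paper's stated $\kappa[s,\XX]$ (which silently absorbs the passage from the sum to the supremum), but this is immaterial for the equivalence.
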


\begin{proof} Suppose that $\BB$ is $K$-unconditional.
If we put $\Nt_i=\alpha^{-1}(i)$ for $i\in\NN[s]$ then
\[
\left\Vert \sum_{n\in \Nt} a_n \, \xx_{\alpha(n),n}\right\Vert=\sup_{i\in\NN[s]} \left\Vert \sum_{n\in \Nt_i} a_n \, \xx_n\right\Vert,
\]
for all $(a_n)_{n=1}^\infty\in c_{00}$. Hence,
\[
\frac{1}{\kappa[s,\XX]}\left\Vert \sum_{n\in \Nt} a_n \, \xx_{n}\right\Vert\le \left\Vert \sum_{n\in \Nt} a_n \, \xx_{\alpha(n),n}\right\Vert \le
K\left\Vert \sum_{n\in \Nt} a_n \, \xx_{n}\right\Vert.\qedhere
\]
%where $K$ denotes the unconditional basis constant of $\BB$.
\end{proof}

The following version of the Hall-K\"onig Lemma (also known as \textit{Marriage Lemma}) for infinite families of finite sets is essential in the proof of Theorem~\ref{thm:SQRTUB}.

\begin{theorem}[see \cite{Hall1948}*{Theorem 1}]\label{thm:HKL}Let $\Nt$ be a set and $(\Nt_i)_{i\in I}$ be a family of finite subsets of $\Nt$. Suppose that
\[
|F|\le \left| \bigcup_{i\in F} \Nt_i\right|
\]
for every $F\subseteq I$ finite. Then there is a one-to-one map $\phi\colon I\to \Nt$ with $\phi(i)\in \Nt_i$ for every $i\in I$.
\end{theorem}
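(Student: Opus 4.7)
The plan is to reduce the statement to the classical finite Hall marriage theorem via a compactness argument. First I would prove the finite case: when $I$ is finite, the Hall condition yields an injective choice function $\phi\colon I\to \Nt$ with $\phi(i)\in \Nt_i$. This I would establish by induction on $|I|$, splitting into two cases according to whether the Hall inequality is strict on every proper nonempty $F\subsetneq I$, or some such $F$ saturates it. In the strict case, pick any $i_0\in I$ and any $x\in \Nt_{i_0}$, set $\phi(i_0)=x$, and verify that $(\Nt_i\setminus\{x\})_{i\in I\setminus\{i_0\}}$ still satisfies Hall's condition thanks to the slack, then invoke the inductive hypothesis. In the saturated case, apply induction separately to the subfamily $(\Nt_i)_{i\in F}$ and to $(\Nt_i\setminus\bigcup_{j\in F}\Nt_j)_{i\in I\setminus F}$; a short set-difference count shows the latter also satisfies Hall's condition.

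For arbitrary $I$, topologize $K:=\prod_{i\in I}\Nt_i$ as a product of finite discrete spaces. By Tychonoff's theorem, $K$ is compact. For each finite $F\subseteq I$ define
\[
K_F:=\{\phi\in K : \phi|_F \text{ is injective}\}.
\]
Each $K_F$ is closed as an intersection of finitely many clopen conditions $\phi(i)\ne\phi(j)$ with $i,j\in F$, $i\ne j$, and is nonempty by the finite case applied to the subsystem $(\Nt_i)_{i\in F}$ and extended arbitrarily over $I\setminus F$. Since $K_{F_1}\cap K_{F_2}\supseteq K_{F_1\cup F_2}$, the family $\{K_F\}_F$ has the finite intersection property, so compactness furnishes some $\phi\in\bigcap_F K_F$. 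Injectivity on every finite subset is the same as global injectivity, so $\phi$ is the desired map.

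The main obstacle is really the finite Hall theorem, which carries essentially all the combinatorial content; the infinite extension is a routine compactness argument once the setup is in place. An alternative to invoking Tychonoff in full strength would be to reduce first to countable $I$ and then run K\"onig's infinity lemma on the tree of finite partial systems of distinct representatives, yielding essentially the same proof but with weaker choice requirements.
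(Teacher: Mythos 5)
Your proof is correct, but note that the paper does not prove this statement at all: it is imported as a black box with a citation to M.~Hall Jr.'s 1948 paper, so there is no internal argument to compare yours against. What you give is the standard textbook proof: the Halmos--Vaughan induction (splitting on whether some proper nonempty $F$ saturates the Hall inequality) for the finite case, followed by a compactness argument on the product $\prod_{i\in I}\Nt_i$ of finite discrete spaces. Both halves check out --- in the strict case the slack survives deleting one element, in the saturated case the identity $\bigl|\bigcup_{i\in G}(\Nt_i\setminus U_F)\bigr|=\bigl|\bigcup_{i\in F\cup G}\Nt_i\bigr|-|U_F|\ge|G|$ does what you claim, and the sets $K_F$ are closed with the finite intersection property since each injectivity constraint depends on finitely many coordinates. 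Your closing remark is also apt: for the application in the paper both $I$ and $\Nt$ are countable, so K\"onig's infinity lemma suffices and full Tychonoff (or the prime ideal theorem) is not actually needed. In short, a complete and correct supply of a proof the paper chose to outsource.
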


\begin{theorem}\label{lem:SQRTSubases}Let $\BB_x$ and $\BB_y$ be two unconditional bases of quasi-Banach spaces $\XX$ and $\YY$ respectively. Suppose that $\BB_x^s$ is permutatively equivalent to a subbasis of $\BB_y^s$ for some $s\ge 2$. Then $\BB_x$ is permutatively equivalent to a subbasis of $\BB_y$.
\end{theorem}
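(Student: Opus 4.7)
The plan is to reduce the conclusion to a system-of-distinct-representatives problem, which can then be solved using the Hall--K\"onig Lemma (Theorem~\ref{thm:HKL}), and to pull the geometric content through using Lemma~\ref{lem:1}. Write $\BB_x=(\xx_n)_{n\in\Nt}$ and $\BB_y=(\yy_m)_{m\in\Mt}$. The hypothesis furnishes an injection $\pi\colon\NN[s]\times\Nt\to\NN[s]\times\Mt$ such that the basic sequences $(\xx_{i,n})_{(i,n)\in\NN[s]\times\Nt}$ and $(\yy_{\pi(i,n)})_{(i,n)\in\NN[s]\times\Nt}$ are equivalent. Write $\pi(i,n)=(\pi_1(i,n),\pi_2(i,n))$.

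The key observation is that if we can find a function $\alpha\colon\Nt\to\NN[s]$ for which the map
\[
\phi\colon\Nt\to\Mt,\qquad \phi(n)=\pi_2(\alpha(n),n),
\]
is injective, then we are done. Indeed, restricting the equivalence given by $\pi$ to the index set $\{(\alpha(n),n)\colon n\in\Nt\}$ yields that $(\xx_{\alpha(n),n})_{n\in\Nt}$ is equivalent to $(\yy_{\pi(\alpha(n),n)})_{n\in\Nt}$. Lemma~\ref{lem:1} applied to $\BB_x$ shows the former is equivalent to $\BB_x$, while the same lemma applied to the subbasis $(\yy_m)_{m\in\phi(\Nt)}$ of $\BB_y$ (whose $s$-fold product contains $(\yy_{\pi(\alpha(n),n)})_{n\in\Nt}$ as a diagonal, legitimately because $\phi$ is injective) shows the latter is equivalent to the subbasis $(\yy_{\phi(n)})_{n\in\Nt}$ of $\BB_y$. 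Chaining these equivalences gives $\BB_x$ equivalent to a subbasis of $\BB_y$, as required.

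To build $\alpha$, for each $n\in\Nt$ set $T_n=\{\pi_2(i,n)\colon i\in\NN[s]\}\subseteq\Mt$, a finite set of size at most $s$. It suffices to find an injection $\Phi\colon\Nt\to\Mt$ with $\Phi(n)\in T_n$ for every $n$; after doing so, define $\alpha(n)$ to be any $i\in\NN[s]$ with $\pi_2(i,n)=\Phi(n)$, so that $\phi=\Phi$ is automatically injective. Theorem~\ref{thm:HKL} will produce $\Phi$ once we check Hall's condition: for every finite $F\subseteq\Nt$,
\[
|F|\le \Bigl|\bigcup_{n\in F} T_n\Bigr|.
\]
Here is where injectivity of $\pi$ is used. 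The restriction $\pi|_{\NN[s]\times F}$ is an injection from $\NN[s]\times F$ into $\NN[s]\times\bigcup_{n\in F}T_n$, so $s\,|F|\le s\,|\bigcup_{n\in F}T_n|$, and the condition follows.

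There is no real obstacle beyond correctly setting up the reduction; the combinatorial content is handled entirely by Hall's lemma, and the verification of Hall's condition is immediate from the injectivity of $\pi$. The only mildly subtle point is the double use of Lemma~\ref{lem:1}, where one must remember to apply it to the subbasis $(\yy_m)_{m\in\phi(\Nt)}$ of $\BB_y$ (not to all of $\BB_y$) in order to identify the image sequence on the $Y$-side with an honest subbasis of $\BB_y$.
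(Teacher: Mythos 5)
Your proposal is correct and coincides in essence with the paper's own argument: both reduce the problem to a system of distinct representatives for the sets $\{\pi_2(i,n)\colon i\in\NN[s]\}$, verify Hall's condition via the injectivity of $\pi$ exactly as you do, and then invoke Lemma~\ref{lem:1} on each side to identify the selected diagonal sequences with $\BB_x$ and with a subbasis of $\BB_y$. The only difference is expository: you spell out more explicitly that the second application of Lemma~\ref{lem:1} is to the subbasis $(\yy_m)_{m\in\phi(\Nt)}$, a point the paper leaves implicit.
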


\begin{proof} Put $\BB_x=(\xx_{n})_{n\in \Nt}$, $\BB_y=(\yy_{n})_{n\in \Mt}$, $\BB_x^s=(\xx_{i,n})_{(i,n)\in \NN[s]\times \Nt}$ and $\BB_y^s=(\yy_{i,n})_{(i,n)\in \NN[s]\times \Mt}$. By hypothesis there is a one-to-one map
\[
\pi=(\pi_1,\pi_2)\colon \NN[s]\times \Nt \to \NN[s]\times \Mt
\]
such that the unconditional bases $\BB_x^s$ and
$
(\yy_{\pi(i,n)})_{(i,n)\in \NN[s]\times \Nt}
$
are equivalent. For $n\in \Nt$ set $\Mt_n=\{\pi_2(i,n) \colon i\in\NN[s]\}$. If $F$ is a finite subset of $\Nt$ we have
\[
\pi( \NN[s] \times F)\subseteq \NN[s]\times \bigcup_{n\in F} \Mt_n, 
\]
and since $\pi$ is one-to-one,
\[
s \, |F|\le s \left|\bigcup_{n\in F} \Mt_n\right|.
\]
Hence, $|F|\le |\cup_{n\in F} \Mt_n|$. We also have $|\Mt_n|\le s$ for all $n\in \Nt$. Therefore, by Theorem~\ref{thm:HKL}, there exist a one-to-one map $\phi\colon \Nt \to \Mt$, a map $\alpha\colon \Nt\to\NN[s]$, and a map $\beta\colon \Mt\to\NN[s]$ such that
\[
\pi(\alpha(n),n)=(\beta(n),\phi(n)), \quad n\in \Nt,
\]
from where it follows that the unconditional basic sequences $\BB_x'=(\xx_{\alpha(n),n})_{n\in \Nt}$ and $\BB_y'=(\yy_{\beta(n),\phi(n)})_{n\in \Nt}$ are equivalent. Since, on the other hand, by Lemma~\ref{lem:1}, $\BB_x'$ is equivalent to $\BB$ and $\BB_y'$ is permutatively equivalent to $(\yy_m)_{m\in \Mt'}$, where $\Mt'=\phi(\Nt)$, we are done.
\end{proof}

\begin{theorem}\label{thm:SQRTUB} Let $\BB_x$ and $\BB_y$ be two unconditional bases of quasi-Banach spaces $\XX$ and $\YY$. Suppose that $\BB_x^s\sim\BB_y^s$ for some $s\ge 2$. Then $\BB_x\sim \BB_y$.
\end{theorem}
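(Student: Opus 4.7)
The plan is to deduce Theorem~\ref{thm:SQRTUB} by combining Theorem~\ref{lem:SQRTSubases} with the Schröder--Bernstein principle for unconditional bases (Theorem~\ref{thm:SBUB}). The work of proving the theorem from powers down to the bases themselves has essentially already been done in Theorem~\ref{lem:SQRTSubases}; once we extract the two one-sided subbasis embeddings, Theorem~\ref{thm:SBUB} closes the argument immediately.

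First, I would unpack the hypothesis $\BB_x^s\sim\BB_y^s$ symmetrically. Permutative equivalence being a symmetric relation, $\BB_x^s$ is permutatively equivalent to $\BB_y^s$, and in particular $\BB_x^s$ is permutatively equivalent to a subbasis of $\BB_y^s$ (namely all of it). Likewise $\BB_y^s$ is permutatively equivalent to a subbasis of $\BB_x^s$.

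Next, I would apply Theorem~\ref{lem:SQRTSubases} in each direction. Applied with the roles $(\BB_x,\BB_y)$, it gives that $\BB_x$ is permutatively equivalent to a subbasis of $\BB_y$. Applied with the roles swapped $(\BB_y,\BB_x)$, it gives that $\BB_y$ is permutatively equivalent to a subbasis of $\BB_x$.

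Finally, these are exactly the two hypotheses of the Schröder--Bernstein theorem for unconditional bases (Theorem~\ref{thm:SBUB}), which then yields $\BB_x\sim\BB_y$. There is no real obstacle here: all the combinatorial and analytic work has been absorbed into Theorem~\ref{lem:SQRTSubases} (via the Hall--König marriage lemma and Lemma~\ref{lem:1}) and into the Wojtowicz--Wojtaszczyk Schröder--Bernstein principle; the proof of Theorem~\ref{thm:SQRTUB} is a one-line synthesis of these two results.
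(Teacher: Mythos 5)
Your proposal is correct and is essentially identical to the paper's own proof: apply Theorem~\ref{lem:SQRTSubases} in both directions to obtain the two one-sided subbasis embeddings, then invoke Theorem~\ref{thm:SBUB}. Nothing is missing.
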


\begin{proof} Apylying Theorem~\ref{lem:SQRTSubases} yields that $\BB_x$ is permutatively equivalent to a subbabis of $\BB_y$, and switching the roles of the basis also the other way around. Using Theorem~\ref{thm:SBUB} closes the proof.
\end{proof}

\begin{corollary}Let $\BB$ be an uconditional basis of a quasi-Banach space. Suppose that $\BB^t$ is permutatively equivalent to a subbasis of $\BB^s$ for some $t>s\ge 1$. Then $\BB^2\sim \BB$.
\end{corollary}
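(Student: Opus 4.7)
The plan is to reduce the corollary to Theorem~\ref{thm:SQRTUB}: if I can produce an integer $N \geq 2$ with $(\BB^2)^N \sim (\BB)^N$, that theorem (applied with $\BB_x = \BB^2$ and $\BB_y = \BB$) immediately delivers $\BB^2 \sim \BB$. Since $(\BB^2)^N = \BB^{2N}$, the task boils down to finding $N \geq 2$ such that $\BB^N \sim \BB^{2N}$.

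First I would upgrade the one-sided hypothesis to a full equivalence $\BB^s \sim \BB^t$. Because $t > s \geq 1$, the basis $\BB^s$ is canonically a subbasis of $\BB^t$, and combining this with the hypothesis through Theorem~\ref{thm:SBUB} yields $\BB^s \sim \BB^t$. Setting $d = t - s \geq 1$, a short induction on $n \geq s$ then shows $\BB^n \sim \BB^{n+d}$: the base case $n = s$ is $\BB^s \sim \BB^t$, and the step $\BB^{n+1} = \BB^n \oplus \BB \sim \BB^{n+d} \oplus \BB = \BB^{n+1+d}$ is automatic. Iterating, $\BB^n \sim \BB^{n+kd}$ for every $n \geq s$ and every $k \geq 0$.

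To finish, I would take $N = 2sd$. Then $N \geq s$, $N \geq 2$, and the gap $2N - N = N$ is a multiple of $d$, so the previous step with $k = 2s$ yields $\BB^N \sim \BB^{2N}$, which is precisely $(\BB^2)^N \sim \BB^N$. An application of Theorem~\ref{thm:SQRTUB} then closes the argument. I do not foresee any genuine obstacle; the only point requiring a moment's thought is making sure $N$ is simultaneously at least $2$ (so that Theorem~\ref{thm:SQRTUB} applies), at least $s$ (so that the induction is valid), and divisible by $d$ (so that the gap from $N$ to $2N$ is reachable). The single choice $N = 2sd$ satisfies all three constraints at once.
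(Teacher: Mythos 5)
Your argument is correct and follows essentially the same strategy as the paper: an induction that propagates the power relation to higher exponents, followed by an appeal to the square-root theorems of Section~\ref{sect:SQRTUB}. The only differences are organizational: the paper keeps one-sided subbasis relations throughout, deduces that $\BB^{2s}$ is permutatively equivalent to a subbasis of $\BB^s$, and then applies Theorem~\ref{lem:SQRTSubases} with exponent $s$ followed by a single use of Theorem~\ref{thm:SBUB}, whereas you symmetrize at the outset with an extra application of Theorem~\ref{thm:SBUB} and then invoke Theorem~\ref{thm:SQRTUB} with the larger (but perfectly valid) exponent $N=2sd$.
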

\begin{proof}Since $t\ge s+1$, $\BB^{s+1}$ is permutatively equivalent to a subbasis of $\BB^s$. By induction we deduce that $\BB^{u+1}$ is permutatively equivalent to a subbasis of $\BB^u$ for every $u\ge s$, and so by transitivity, $\BB^u$ is permutatively equivalent to a subbasis of $\BB^s$ for every $u\ge s$. In particular, $\BB^{2s}$ is is permutatively equivalent to a subbasis of $\BB^s$. Therefore, by Theorem~\ref{lem:SQRTSubases}, $\BB^{2}$ is is permutatively equivalent to a subbasis of $\BB$. Since $\BB$ is permutatively equivalent to a subbasis of $\BB^2$, applying Theorem~\ref{thm:SBUB} we are done.
\end{proof}

\section{A new theoretical approach to the uniqueness of unconditional basis in quasi-Banach spaces}\label{sect:UUB}

\noindent From a structural point of view, it is useful to know if a given space has an unconditional basis and, if the answer is yes, whether this is the unique unconditional basis of the space. Recall that a quasi-Banach space $\XX$ with an unconditional basis $\BB$ is said to have a \textit{unique unconditional basis}, if every semi-normalized unconditional basis of $\XX$ is equivalent to $\BB$. For convenience, from now on all bases will be assumed to be semi-normalized. 
Note that, if $\BB=(\xx_n)_{n \in \Nt}$ is a semi-normalized unconditional basis then it is equivalent to the normalized basis $(\xx_n/\Vert \xx_n\Vert)_{n \in \Nt}$. 

For a Banach space with a symmetric basis it is rather unusual to have a unique unconditional basis. It is well-known that $\ell_{2}$ has a unique unconditional basis \cite{KotheToeplitz1934}, and a classical result of Lindenstrauss and Pe{\l }czy{\'n}ski \cite{LinPel1968} asserts that $\ell_{1}$ and $c_{0}$ also have a unique unconditional basis. Lindenstrauss and Zippin \cite{LinZip1969} completed the picture by showing that those three are the only Banach spaces in which all unconditional bases are equivalent. 

Once we have determined that a Banach space does not have a symmetric basis (a task that can be far from trivial) we must rethink the problem of uniqueness of unconditional basis. In fact, an unconditional non-symmetric basis admits a continuum of nonequivalent permutations (cf.\ \cite{Hennefeld1973}*{Theorem 2.1}). Hence for Banach spaces without symmetric bases it is more natural to consider instead the question of uniqueness of unconditional bases up to (equivalence and) a permutation, (UTAP) for short. We say that $\XX$ has a (UTAP) unconditional basis $\BB$ if every unconditional basis in $\XX$ is permutatively equivalent to $\BB$. The first movers in this direction were Edelstein and Wojtaszczyk, who proved that finite direct sums of $c_0$, $\ell_1$ and $\ell_2$ have a (UTAP) unconditional basis \cite{EdelWoj1976}. Bourgain et al.\ embarked on a comprehensive study aimed at classifying those Banach spaces with unique unconditional basis up to permutation, that culminated in 1985 with their \textit{Memoir} \cite{BCLT1985}. They showed that the spaces $c_{0}(\ell_{1})$, $c_{0}(\ell_{2})$, $\ell_{1}(c_{0})$, $\ell_{1}(\ell_{2})$ and their complemented subspaces with unconditional basis all have a (UTAP) unconditional basis, while $\ell_{2}(\ell_{1})$ and $\ell_{2}(c_{0})$ do not. However, the hopes of attaining a satisfactory classification were shattered when they found a nonclassical Banach space, namely the $2$-convexification $\Ts^{(2)}$ of Tsirelson's space having a (UTAP) unconditional basis. Their work also left many open questions, most of which remain unsolved as of today. 

On the other hand, in the context of quasi-Banach spaces that are not Banach spaces, the uniqueness of unconditional basis seems to be the norm rather than an exception. For instance, it was shown in \cite{Kalton1977} that a wide class of nonlocally convex Orlicz sequence spaces, including the $\ell_{p}$ spaces for $0<p<1$, have a unique uncoditional basis. The same is true in nonlocally convex Lorentz sequence spaces (\cites{KLW1990, AlbiacLeranoz2008}) and (UTAP) in the Hardy spaces $H_{p}(\TT)$ for $0<p<1$ (\cite{Woj1997}).

This section is geared towards Theorem~\ref{thm:keytechniquebis}, which tells us that, under three straightforwardly verified conditions regarding a space and a basis, the unconditional bases of a space are all permutatively equivalent. The techniques used in the proof of this theorem are a development of the methods introduced by Casazza and Kalton in \cites{CasKal1998, CasKal1999} to investigate the problem of uniqueness of unconditional basis in a class of Banach lattices that they called \textit{anti-Euclidean}. The subtle but crucial role played by the lattice structure of the space in the proof of Theorem~\ref{thm:keytechniquebis} has to be seen in that it will permit to simplify the untangled way in which the vectors of one basis can be written in terms of the other. These techniques have been extended to the nonlocally convex setting and efficiently used in the literature to establish the uniqueness of unconditional basis up to permutation of the spaces $\ell_{p}(\ell_{q})=(\ell_{q}\oplus \ell_{p}\oplus\dots\oplus \ell_{p}\dots)_{q}$ for $p\in (0,1]\cup \{\infty\}$ and $q\in (0,1]\cup \{2,\infty\}$ (see \cites{AKL2004,AlbiacLeranoz2008, AlbiacLeranoz2010, AlbiacLeranoz2011, AlbiacLeranoz2011b}), with the convention that $\ell_{\infty}$ here means $c_{0}$.

Before moving on, recall that an unconditional basic sequence $\BB_u=(\uu_{m})_{m\in \Mt}$ in a quasi-Banach space $\XX$ is said to be \textit{complemented} if its closed linear span $\UU= [\BB_u]$ is a complemented subspace of $\XX$, i.e., there is a bounded linear map $P\colon\XX\to\UU$ with $P|_\UU=\Id_\UU$. Notice the unconditional basic sequence $\BB_u=(\uu_m)_{m\in \Mt}$ is complemented in $\XX$ if and only if there exists a sequence $(\uu_m^*)_{m\in \Mt}$ in $\XX^*$ such that $\uu_m^*(\uu_n)=\delta_{m,n}$ for every $(m,n)\in \Mt^2$ and a there is a linear bounded map $P_u\colon\XX\to \XX$ given by
\begin{equation}\label{eq:projCUBS}
P_u(f)=\sum_{m\in \Mt} \uu_m^*(f) \, \uu_m, \quad f\in\XX.
\end{equation}
We will refer to $(\uu_m^*)_{m\in \Mt}$ as a sequence of \textit{projecting functionals} for $\BB_u$. 
A family $\BB_u=(\uu_m)_{m\in \Mt}$ in $\XX$ with mutually disjoint supports with respect to a given unconditional basis $\BB$ is an unconditional basic sequence. In the case when, moreover, $\supp(\uu_m)$ is finite for every $m\in \Mt$ we say that $\BB_u$ is a block basic sequence (with respect to $\BB$). We say that the block basic sequence $\BB_u$ is \textit{well complemented} (with respect to $\BB$) if we can choose a sequence of projecting functionals $\BB_{u}^*=(\uu_m^*)_{m\in \Mt}$ with $\supp(\uu_m^*)\subseteq \supp(\uu_m)$ for all $m\in \Mt$. In this case, $\BB_{u}^*$ is called a sequence of \textit{good projecting functionals} for $\BB_{u}$. 

The following definition identifies and gives relief to an unstated feature shared by some unconditional bases. Examples of such bases can be found, e.g., in \cites{Kalton1977, CasKal1998, AlbiacLeranoz2008}, where the property naturally arises in connection with the problem of uniqueness of unconditional basis.

\begin{definition}An unconditional basis $\BB=(\xx_n)_{n\in \Nt}$ of a quasi-Banach space will be said to be \textit{universal for well complemented block basic sequences} if for every semi-normalized well complemented block basic sequence $\BB_u=(\uu_m)_{m\in \Mt}$ of $\BB$ there is a map $\pi\colon \Mt\to \Nt$ such that $\pi(m)\in\supp(\uu_n)$ for every $m\in \Mt$, and $\BB_u$ is equivalent to the rearranged subbasis $(\xx_{\pi(m)})_{m\in \Mt}$ of $\BB$.
\end{definition}

The ideas in the following definition and proposition are implicit in \cite{Kalton1977}.

\begin{definition} An unconditional basis $\BB=(\xx_n)_{n\in \Nt}$ of a quasi-Banach space $\XX$
will be said to have the \textit{peaking property} if every semi-normalized well complemented block basic sequence $\BB_u=(\uu_m)_{m\in \Mt}$ with respect to $\BB$ satisfies
\begin{equation}\label{eq:gh}
\inf_{m\in \Mt} \sup_{n\in \Nt} |\uu_m^*(\xx_n)| \, |\xx_n^*(\uu_m)|>0
\end{equation}
for some sequence $(\uu_m^*)_{m\in \Mt}$ of good projecting functionals for $\BB_u$.
\end{definition}

\begin{proposition}\label{prop:k2one} Suppose $\BB=(\xx_n)_{n \in \Nt}$ is an unconditional basis of a quasi-Banach space $X$. If $\BB$ has the peaking property then it is universal for well complemented block basic sequences.
\end{proposition}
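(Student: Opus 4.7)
The plan is to use the peaking property to single out, for each $m\in\Mt$, a coordinate $\pi(m)\in\supp(\uu_m)$ at which both $\uu_m^*$ and $\uu_m$ have a comparable footprint on $\BB$, and then to check that $\pi$ itself witnesses the desired equivalence. First I fix a sequence $(\uu_m^*)_{m\in\Mt}$ of good projecting functionals for which $\inf_m\sup_n|\uu_m^*(\xx_n)|\,|\xx_n^*(\uu_m)|\ge\delta$ for some $\delta>0$. Since $\supp(\uu_m^*)\subseteq\supp(\uu_m)$ is finite, the inner supremum is actually attained, so I can choose $\pi(m)\in\supp(\uu_m)$ with $|\uu_m^*(\xx_{\pi(m)})|\,|\xx_{\pi(m)}^*(\uu_m)|\ge\delta$. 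Mutual disjointness of the supports of the $\uu_m$'s forces $\pi\colon\Mt\to\Nt$ to be one-to-one, so $(\xx_{\pi(m)})_{m\in\Mt}$ is indeed a rearranged subbasis of $\BB$.

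Next, I set $b_m:=\uu_m^*(\xx_{\pi(m)})$ and $c_m:=\xx_{\pi(m)}^*(\uu_m)$, so that $|b_m c_m|\ge\delta$. Combining the semi-normalization of $\BB$ and $\BB_u$ with the uniform upper bounds $\sup_m\|\uu_m^*\|_{\XX^*}<\infty$ and $\sup_n\|\xx_n^*\|_{\XX^*}<\infty$ yields a single constant $M$ for which $|b_m|,|c_m|\le M$, and therefore also $|b_m|,|c_m|\ge\delta/M$, for every $m$. The equivalence is then immediate in both directions. Applying the bounded projection $P_u(\cdot)=\sum_m\uu_m^*(\cdot)\,\uu_m$ to $g=\sum_m a_m\xx_{\pi(m)}$ gives $P_u(g)=\sum_m a_m b_m\uu_m$, because for $m'\ne m$ the index $\pi(m')$ lies in $\supp(\uu_{m'})$ and hence outside $\supp(\uu_m^*)\subseteq\supp(\uu_m)$; unconditionality of $\BB_u$ together with $|b_m|\ge\delta/M$ then delivers $\|\sum_m a_m\uu_m\|\le C\|g\|$. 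Symmetrically, applying the coordinate projection of $\BB$ onto $\{\pi(m):m\in\Mt\}$ to $f=\sum_m a_m\uu_m$ produces $\sum_m a_m c_m\xx_{\pi(m)}$, and unconditionality of $\BB$ combined with $|c_m|\ge\delta/M$ yields $\|\sum_m a_m\xx_{\pi(m)}\|\le C'\|f\|$.

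The step I expect to warrant the most care is the uniform upper bound $\sup_m\|\uu_m^*\|_{\XX^*}<\infty$: it follows from the boundedness of the projection $P_u\colon\XX\to[\BB_u]$ combined with the fact that the coordinate functionals of the unconditional basis $\BB_u$ of its closed linear span are uniformly bounded, a consequence of the finite unconditional constant of $\BB_u$ together with the semi-normalization of $\BB_u$. Once this is in place, the rest of the argument is a direct invocation of the unconditional-basis inequality applied to the coefficient sequences $(a_m)$ versus $(a_m b_m)$ and $(a_m c_m)$, and the quasi-Banach setting presents no additional difficulty.
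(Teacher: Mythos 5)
Your proof is correct and follows essentially the same route as the paper: select the peaking coordinate $\pi(m)\in\supp(\uu_m)$, note that disjointness of supports makes $\pi$ injective, and transfer norms in both directions via the projection $P_u$ and the coordinate projection onto $\pi(\Mt)$. The only difference is that the paper outsources the two-sided domination to an external perturbation lemma (\cite{AlbiacAnsorena2020}*{Lemma 3.1}) applied to $\vv_m=\xx^*_{\pi(m)}(\uu_m)\,\xx_{\pi(m)}$ and $\vv_m^*=\uu_m^*(\xx_{\pi(m)})\,\xx^*_{\pi(m)}$, whereas you prove the two inequalities directly; your uniform bounds on $\Vert\uu_m^*\Vert$ and on $b_m$, $c_m$ are the correct ingredients for that.
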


\begin{proof} Let $\BB_u=(\uu_m)_{m\in \Mt}$ be a semi-normalized well complemented block basic sequence and $\BB_u^*=(\uu_m^*)_{m\in \Mt}$ be a sequence of good projecting functionals for $\BB_u$ such that \eqref{eq:gh} holds. There is $\pi\colon \Mt\to \Nt$ one-to-one with
\[
\inf_{m\in \Mt} |\xx_{\pi(m)}^*(\uu_m)| \, |\xx_{\pi(m)}^*(\uu_m)|>0.
\]
For $m\in \Mt$ let us put
\[
\lambda_m= \xx_{\pi(m)}^*(\uu_m),\quad \mu_m=\xx_{\pi(m)}(\uu_m^*),
\]
and set
\[ 
\vv_m=\lambda_m \, \xx_{\pi(m)},\quad \vv_m^*= \mu_m \, \xx^*_{\pi(m)}.
\]
By \cite{AlbiacAnsorena2020}*{Lemma 3.1}, $\BB_v=(\vv_m)_{m \in \Mt}$ is equivalent to $\BB_u$. In particular, $\BB_v$ is semi-normalized so that $\inf_m \lambda_m>0$ and $\sup_m \lambda_m<\infty$. It follows that $\BB_v$ is equivalent to $(\xx_{\pi(m)})_{m \in \Mt}$.
\end{proof}

The last ingredient in the deconstruction process we are carrying out is the following feature about the lattice structure of a quasi-Banach space. 
\begin{definition}A quasi-Banach space (respectively, a quasi-Banach lattice) $\XX$ is said to be \textit{sufficiently Euclidean} if $\ell_2$ is crudely finitely representable in $\XX$ as a complemented subspace (respectively, complemented sublattice), i.e., there is a positive constant $C$ such that for every $n\in\NN$ there are bounded linear maps (respectively, lattice homomorphisms) $I_n \colon\ell_2^n \to \XX$ and $P_n\colon \XX \to \ell_2^n$ with $P_n\circ I_n =\Id_{\ell_2^n}$ and $ \Vert I_n\Vert \, \Vert P_n \Vert\le C$. We say that $\XX$ is \textit{anti-Euclidean} (resp.\ \textit{lattice anti-Euclidean}) if it is not sufficiently Euclidean.
\end{definition}

Any (semi-normalized) unconditional basis of a quasi-Banach space $\XX$ is equivalent to the unit vector system of a sequence space and so it induces a lattice structure on $\XX$. In general, we will say that an unconditional basis has a property about lattices if its associated sequence space has it. And the other way around, i.e., we will say that a sequence space enjoys a certain property relevant to bases if its unit vector system does.

A quasi-Banach lattice $\XX$ is said to be \textit{L-convex} if there is $\varepsilon>0$ so that whenever $f$ and $(f_i)_{i=1}^k$ in $\XX$ satisfy $0\le f_i\le f$ for every $i=1$, \dots, $k$, and $(1-\varepsilon)kf\ge \sum_{i=1}^k f_i$ we have $\varepsilon \Vert f \Vert \le \max_{1\le i \le k} \Vert f_i\Vert$. Kalton \cite{Kalton1984b} showed that a quasi-Banach lattice is $L$-convex if and only if it is $p$-convex for some $p>0$. So, most quasi-Banach lattices (and unconditional bases) ocurring naturally in analysis are L-convex.

The space $\ell_1$ is the simplest and most important example of anti-Euclidean space (see e.g. \cite{AlbiacAnsorena2020}*{Comments previous to Remark 2.9}). So, it is helpful to be able to count on conditions that guarantee that the Banach envelope of a given quasi-Banach space is $\ell_1$.

\begin{lemma}[see \cite{AlbiacAnsorena2020}*{Proposition 2.10}]\label{lem:BEl1}Suppose $\XX$ is a quasi-Banach space with an unconditional basis $\BB$ that dominates the unit vector basis of $\ell_1$. Then the Banach envelope of $\XX$ is $\ell_1$ via the coefficient transform.
\end{lemma}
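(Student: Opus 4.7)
The plan is to verify that the coefficient transform $\Fou$ with respect to $\BB$, now viewed as a map into $\ell_1$, satisfies the universal property of the Banach envelope of $\XX$, and then conclude by uniqueness of the envelope. After replacing $\BB$ with the equivalent normalized basis $(\xx_n/\Vert\xx_n\Vert)_{n\in\Nt}$, which still dominates the unit vector basis of $\ell_1$, fix $C$ such that
\[
\sum_{n\in\Nt}|a_n|\le C\left\Vert\sum_{n\in\Nt} a_n\,\xx_n\right\Vert,\quad (a_n)\in c_{00}(\Nt).
\]
Passing to the limit on the partial sums of the unconditional expansion of any $f\in\XX$, we see that $\Fou$ takes values in $\ell_1$ and is bounded from $\XX$ into $\ell_1$ with $\Vert\Fou\Vert\le C$.

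Next I would verify the universal factorization. Let $T\colon\XX\to\YY$ be a bounded linear map into a Banach space. Since $\BB$ is normalized, $\sup_n\Vert T(\xx_n)\Vert_\YY\le \Vert T\Vert$, so the formula
\[
S\bigl((a_n)_{n\in\Nt}\bigr):=\sum_{n\in\Nt}a_n\,T(\xx_n)
\]
defines a bounded linear map $S\colon\ell_1\to\YY$ with $\Vert S\Vert\le\Vert T\Vert$, the series converging absolutely in $\YY$. For $f=\sum_{n\in\Nt} a_n\xx_n\in\XX$, applying $T$ to the partial sums of this unconditionally convergent expansion and using continuity of $T$ yields $T(f)=\sum_{n\in\Nt} a_n\,T(\xx_n)=S(\Fou(f))$. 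Uniqueness of such an $S$ is immediate from the fact that any candidate must agree with $T(\xx_n)$ on each $\ee_n$, and the span of $(\ee_n)_{n\in\Nt}$ is dense in $\ell_1$. Hence every bounded linear map from $\XX$ into a Banach space factors uniquely through $\Fou$.

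To close, note that $\Fou(\XX)$ contains every canonical vector $\ee_n=\Fou(\xx_n)$ and is therefore dense in $\ell_1$. Applying the factorization above with $\YY=\widehat{\XX}$ and $T=J_\XX$ produces a bounded map $R\colon\ell_1\to\widehat{\XX}$ with $R\circ\Fou=J_\XX$; combined with the canonical bounded $\widehat{\Fou}\colon\widehat{\XX}\to\ell_1$ induced by the universal property of $\widehat{\XX}$ itself (applied to $\Fou$, suitably rescaled to a contraction), the compositions $R\circ\widehat{\Fou}$ and $\widehat{\Fou}\circ R$ agree with the identity on the dense subsets $J_\XX(\XX)$ of $\widehat{\XX}$ and $\Fou(\XX)$ of $\ell_1$ respectively, hence everywhere. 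Thus $\widehat{\Fou}$ is an isomorphism, which is exactly the assertion that $\ell_1$ is the Banach envelope of $\XX$ via $\Fou$.

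I do not anticipate any serious obstacle: the only subtle point is justifying that $T$ commutes with the unconditionally convergent sum $\sum_n a_n\xx_n$, which follows at once from continuity of $T$ together with convergence of the basis-expansion partial sums in $\XX$. Everything else is a formal application of the universal property of the Banach envelope and the density of the coordinate vectors in $\ell_1$.
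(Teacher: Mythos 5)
Your argument is correct, but note that the paper does not actually prove this lemma: it is imported verbatim from \cite{AlbiacAnsorena2020}*{Proposition 2.10}, so there is no internal proof to compare against. Your self-contained verification of the universal property is the natural route and it works: domination of the $\ell_1$ basis makes $\Fou$ bounded into $\ell_1$, every bounded $T\colon\XX\to\YY$ into a Banach space factors as $S\circ\Fou$ with $S$ determined on the dense span of $(\ee_n)_{n\in\Nt}$, and the two factorizations $R\circ\Fou=J_\XX$ and $\widehat{\Fou}\circ J_\XX=\Fou$ yield mutually inverse maps. Two small points deserve an explicit line. First, when you pass to the limit over partial sums to get $\Vert\Fou(f)\Vert_1\lesssim\Vert f\Vert$, you should invoke the unconditionality constant to bound $\Vert\sum_{n\in F}a_n\xx_n\Vert$ by a multiple of $\Vert f\Vert$ uniformly in the finite set $F$, rather than appeal to convergence of $\Vert f_F\Vert$ to $\Vert f\Vert$ (a quasi-norm need not be continuous). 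Second, your final step uses that $J_\XX(\XX)$ is dense in $\widehat{\XX}$; this is true but is not part of the stated universal property, so it merits a one-line justification (if $Z$ denotes the closure of $J_\XX(\XX)$, factor $J_\XX$ through $Z$ and use uniqueness in the universal property to force $Z=\widehat{\XX}$). With those additions the proof is complete.
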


The following lemma is useful when dealing with unconditional bases that dominate the canonical basis of $\ell_1$. 

Given an unconditional basis $\BB=(\xx_n)_{n \in \Nt}$ with coordinate functionals $(\xx_n^*)_{n \in \Nt}$
and $A\subseteq \Nt$ finite we will put
\[
\Ind_A[\BB]=\sum_{n\in A} \xx_n\quad \text{ and }\quad \Ind_A^*[\BB]=\sum_{n\in A} \xx_n^*.
\]
If $\BB$ is clear from context we simply write $\Ind_A=\Ind_A[\BB]$ and $\Ind_A^*=\Ind_A^*[\BB]$.
\begin{lemma}[cf. \cite{AKL2004}*{Lemma 4.1}]\label{lem:k2two} Let $\BB=(\xx_n)_{n \in \Nt}$ be an unconditional basis of a quasi-Banach space $\XX$. Suppose that $\BB$ dominates the canonical basis of $\ell_1$. Then every semi-normalized well complemented block basic sequence of $\XX$ with respect to $\BB$ is equivalent to a well complemented block basic sequence $(\uu_m)_{m \in \Mt}$ for which $(\Ind^*_{\supp(\uu_m)})_{m \in \Mt}$ is a sequence of projecting functionals.
\end{lemma}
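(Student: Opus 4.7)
The plan is to modify $(\vv_m)$ by a sign change and a rescaling so that the new block vectors have non-negative coefficients summing to $1$ on each support; the indicator-sum functionals $\Ind^*_{\supp(\uu_m)}$ are then automatically biorthogonal to the modified sequence, and the real work lies in verifying that they implement a bounded projection on $\XX$.

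Concretely, write $A_m=\supp(\vv_m)$, $\vv_m=\sum_{n\in A_m}a_{m,n}\,\xx_n$, and fix good projecting functionals $\vv_m^*=\sum_{n\in A_m}b_{m,n}\,\xx_n^*$. Choosing $\epsilon_n=\mathrm{sign}(a_{m,n})$ for $n\in A_m$ and $\epsilon_n=1$ otherwise (well defined by the disjointness of the $A_m$'s), unconditionality makes the diagonal map $T\bigl(\sum_n c_n\,\xx_n\bigr)=\sum_n \epsilon_n c_n\,\xx_n$ a $K$-isomorphism of $\XX$, and $\widetilde{\vv}_m:=T(\vv_m)=\sum_{n\in A_m}|a_{m,n}|\,\xx_n$ is an equivalent well complemented block basic sequence with non-negative coefficients $u_{m,n}=|a_{m,n}|$. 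Let $s_m=\sum_{n\in A_m}u_{m,n}$. The upper bound $s_m\le C_1\|\vv_m\|$ comes straight from the hypothesis that $\BB$ dominates the canonical basis of $\ell_1$, while the lower bound $s_m\ge c_1>0$ follows from the identity $1=\vv_m^*(\vv_m)\le\bigl(\sup_n|b_{m,n}|\bigr)\,s_m$ together with the uniform boundedness of $(\vv_m^*)$ in $\XX^*$ (an easy consequence of the boundedness of the projection and the semi-normalization of $(\vv_m)$). Setting $\uu_m:=\widetilde{\vv}_m/s_m$, the sequence $(\uu_m)$ is semi-normalized, equivalent to $(\vv_m)$, supported on $A_m$, and satisfies $\Ind^*_{A_m}(\uu_{m'})=\delta_{m,m'}$ because the supports are pairwise disjoint and the coefficients $u_{m,n}/s_m$ sum to $1$ within each $A_m$.

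The main obstacle I expect is to verify that $Q(f):=\sum_m\Ind^*_{A_m}(f)\,\uu_m$ extends to a bounded operator on $\XX$. The hypothesis that $\BB$ dominates $\ell_1$ gives the scalar estimate $\sum_m|\Ind^*_{A_m}(f)|\le\sum_n|\xx_n^*(f)|\le C\|f\|_\XX$, so it would suffice to know that $(\uu_m)$ is equivalent to the canonical basis of $\ell_1$ in $\XX$: in that case $\|Q(f)\|\le C'\sum_m|\Ind^*_{A_m}(f)|\le C'C\|f\|$. The lower estimate $\|\sum_m\mu_m\,\uu_m\|_\XX\ge c\sum_m|\mu_m|$ comes directly from Lemma~\ref{lem:BEl1}: the images of the $\uu_m$ in the envelope $\widehat{\XX}=\ell_1$ have pairwise disjoint supports and $\ell_1$-masses equal to $1$, and envelope contraction transports this estimate back to $\XX$. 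For the matching upper bound I would use the well-complementation of $(\widetilde{\vv}_m)$—inherited from $(\vv_m)$ via the bounded projection $\widetilde{P}=T\circ P\circ T^{-1}$—to argue that $[\widetilde{\vv}_m]=[\uu_m]$ is a complemented quasi-Banach subspace of $\XX$ whose Banach envelope is $\ell_1$, and the delicate part will be to show that its own envelope map is in fact an isomorphism, so that the $\XX$-quasi-norm and the $\ell_1$-envelope norm on $[\uu_m]$ are equivalent. Once this equivalence is in hand, combining it with the scalar estimate above yields the desired bound on $Q$ and confirms that $(\Ind^*_{A_m})_m$ is a sequence of projecting functionals for $(\uu_m)$.
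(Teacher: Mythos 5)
Your construction breaks down before the ``delicate part'' you flag: you keep the full support $A_m=\supp(\vv_m)$, and for that choice the functionals $\Ind^*_{A_m}$ need not implement a bounded projection at all. A good projecting functional $\vv_m^*$ is only required to be supported \emph{inside} $\supp(\vv_m)$; it may be tiny, or even vanish, on part of it, and there $\Ind^*_{A_m}$ is in no way controlled by $\vv_m^*$. Concretely, let $\XX$ be the sequence space whose odd coordinates form $\ell_p$ ($0<p<1$) and whose even coordinates form $\ell_1$, with $\BB$ the unit vector system (which dominates the $\ell_1$ basis), and take $\vv_m=\ee_{2m-1}+\epsilon_m\ee_{2m}$ with $\epsilon_m$ small and $\vv_m^*=\ee_{2m-1}^*$. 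This is a semi-normalized well complemented block basic sequence with good projecting functionals; your $\uu_m$ is $(1+\epsilon_m)^{-1}\vv_m$, and for $f=\sum_{m=1}^N\ee_{2m}$ one has $\Vert f\Vert=N$ while $\Vert Q(f)\Vert\gtrsim N^{1/p}$, so $Q$ is unbounded. This is precisely why the paper first \emph{prunes} the support: it sets $A_m=\{n\colon |\vv_m^*(\xx_n)|>1/(2C_1C_2)\}$ (with $C_1$ the $\ell_1$-domination constant and $C_2=\sup_m\Vert\vv_m\Vert$), uses the domination of the $\ell_1$ basis to show that the discarded part carries at most half of the mass $\sum_n|\xx_n^*(\vv_m)\,\vv_m^*(\xx_n)|\ge\vv_m^*(\vv_m)=1$, and only then defines the new vectors (with coefficients proportional to $|\xx_n^*(\vv_m)\,\vv_m^*(\xx_n)|$, $n\in A_m$) and the functionals $\Ind^*_{A_m}$. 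On the pruned set one has $1\le 2C_1C_2|\vv_m^*(\xx_n)|$, so both the new vectors and the new functionals are coefficient-wise dominated by constant multiples of the old ones.

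The second gap is your mechanism for bounding $Q$. You reduce it to showing that $(\uu_m)_{m\in\Mt}$ is equivalent to the canonical $\ell_1$ basis, to be deduced from $[\uu_m]$ being complemented with Banach envelope $\ell_1$. That implication is false: $\ell_p$ with $0<p<1$ is complemented in itself and has Banach envelope $\ell_1$, yet its envelope map is not an isomorphism; correspondingly, for the trivial block basis $\vv_m=\ee_m$ in $\ell_p$ your intermediate claim fails even though the lemma's conclusion holds trivially there. The boundedness of the new projection cannot come from factoring through $\ell_1$; it comes from comparing it with the bounded projection one already has, using the coefficient-wise domination just described together with unconditionality. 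This is exactly the content of \cite{AlbiacAnsorena2020}*{Lemma 3.1}, which the paper invokes to conclude: once $\vv_m^*(\vv_m)$ is normalized and the new vectors and functionals are coordinatewise dominated by the old ones, the new block sequence is well complemented, with the indicator functionals as good projecting functionals, and equivalent to the original one.
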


\begin{proof}Let $C_1$ be such that $\sum_{n \in \Nt} |\xx_n^*(f)|\le C_1 \Vert f \Vert$ for every $f\in\XX$. Set \[
C_2=\sup_{m \in \Mt} \Vert \uu_m\Vert,\quad C_3=\sup_{m \in \Mt} \Vert \uu_m^*\Vert,\;\quad \text{and}\;
C_4=\sup_{n \in \Nt} \Vert \xx_n\Vert.\]
Fix $m \in \Mt$ and put
\[
A_m=\left\{n \in \Nt \colon |\uu_m^*(\xx_n)| > \frac{1}{2C_1 C_2}\right\}.
\]
We have
\[
\sum_{n \in \Nt\setminus A_m} |\xx_n^*(\uu_m) \, \uu_m^*(\xx_n)| \le\frac{1}{2C_1 C_2} \sum_{n \in \Nt\setminus A_m} | \xx_n^*(\uu_m)|\le \frac{1}{2}.
\]
Hence,
\begin{align*}
\lambda_m:&=\sum_{n\in A_m} |\xx_n^*(\uu_m) \, \uu_m^*(\xx_n)|\\
&\ge -\frac{1}{2}+\sum_{n \in \Nt} |\xx_n^*(\uu_m) \, \uu_m^*(\xx_n)|\\
&\ge -\frac{1}{2}+\uu_m^*(\uu_m)=\frac{1}{2}.
\end{align*}
Let
\[
\vv_m =\lambda_m^{-1} \sum_{n\in A_m} |\xx_n^*(\uu_m) \, \uu_m^*(\xx_n)| \, \xx_n
\]
and $\vv_m^*=\Ind_{A_m}^*$. For every $n \in \Nt$ we have 
\[
\vv_m^*(\vv_m)=1, \quad \lambda_m^{-1} |\uu_m^*(\xx_n)|\le 2 C_3 C_4,
\] and for every $n\in A_m$,
\[
1\le 2 C_1 C_2 |\uu_m^*(\xx_n)|.
\] 
Hence, the result follows from \cite{AlbiacAnsorena2020}*{Lemma 3.1}.
\end{proof}

We will use the full force of the lattice structure induced by the basis in the following reduction lemma.

\begin{lemma}\label{lem:keytechniquebis}Let $\XX$ be a quasi-Banach space whose Banach envelope is anti-Euclidean. Suppose that $\BB$ is an L-convex, unconditional basis of $X$ which is universal for well complemented block basic sequences. Then, if $\BB_u$ is another unconditional basis of $\XX$, there are positive integers $s$ and $t$ such that $\BB_u$ is permutatively equivalent to a subbasis of $\BB^s$ and $\BB$ is permutatively equivalent to a subbasis of $\BB_u^t$.
\end{lemma}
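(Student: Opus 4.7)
The plan is to extract, from the two bases, a doubly stochastic matrix of coefficients and show that anti-Euclideanness of the envelope forces this matrix to be sparse in both rows and columns. The sparsity, together with $L$-convexity, will reduce the problem to matching two families of finite subsets, which I will handle via the Hall--K\"onig lemma (Theorem~\ref{thm:HKL}). Universality of $\BB$ then converts the resulting block basic sequences into genuine subbases.

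Concretely, set $a_{m,n}:=\xx_n^*(\uu_m)$ and $b_{n,m}:=\uu_m^*(\xx_n)$, so that $\uu_m=\sum_n a_{m,n}\xx_n$ and $\xx_n=\sum_m b_{n,m}\uu_m$. Biorthogonality yields the dual identities $\sum_n a_{m,n}b_{n,m}=1=\sum_m a_{m,n}b_{n,m}$ (after suitable normalization), so the products $c_{m,n}:=a_{m,n}b_{n,m}$ form a formally doubly stochastic kernel on $\Mt\times\Nt$. The key step---and what I expect to be the main obstacle---is to combine the anti-Euclidean hypothesis on $\widehat{\XX}$ with the $L$-convexity of $\BB$ (which provides Khintchine-type control in the quasi-Banach setting) in order to produce a threshold $\delta>0$ and an integer $k$, depending only on the structural constants, such that the large-coefficient sets $A_m:=\{n:|c_{m,n}|\ge\delta\}$ and $B_n:=\{m:|c_{m,n}|\ge\delta\}$ satisfy $|A_m|,|B_n|\le k$ for every $m\in\Mt$ and $n\in\Nt$. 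The rough idea is that a failure of such sparsity would allow one to extract, via normalization, sequences whose images in $\widehat{\XX}$ span uniformly complemented copies of $\ell_2^N$ for arbitrary $N$, contradicting anti-Euclideanness.

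With sparsity secured, I take $s=t=k$. For the first conclusion, I approximate each $\uu_m$ by its compression $\vv_m:=\sum_{n\in A_m}a_{m,n}\xx_n$; the tail $\uu_m-\vv_m$ is controlled via $L$-convexity and a \cite{AlbiacAnsorena2020}*{Lemma 3.1}-type perturbation, so $(\vv_m)_{m\in\Mt}$ is a semi-normalized well complemented block basic sequence of $\BB$ equivalent to $\BB_u$, with good projecting functionals obtained by renormalizing $\Ind_{A_m}^*[\BB]$ in the spirit of Lemma~\ref{lem:k2two}. The Hall condition $|F|\le|\bigcup_{m\in F}A_m|$ follows from the symmetric bound $|B_n|\le k$ by a double count $\sum_{m\in F}|A_m|=\sum_n|F\cap B_n|\le k|\bigcup_{m\in F}A_m|$, so Theorem~\ref{thm:HKL} yields an injection $\Mt\hookrightarrow\NN[s]\times\Nt$ and a partition of $\Mt$ into $s$ subclasses within each of which the $A_m$'s are pairwise disjoint. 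Each subclass is an honest semi-normalized well complemented block basic sequence of $\BB$, and by universality of $\BB$ it is permutatively equivalent to a subbasis of $\BB$; aggregating the $s$ subclasses delivers $\BB_u$ permutatively equivalent to a subbasis of $\BB^s$.

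The second conclusion is obtained by running the symmetric procedure with the roles of the bases swapped: define $\tilde{\xx}_n:=\sum_{m\in B_n}b_{n,m}\uu_m$, check via the same $L$-convex perturbation that $(\tilde{\xx}_n)_{n\in\Nt}$ is equivalent to $\BB$ and is a semi-normalized well complemented block basic sequence of $\BB_u$ with supports of size at most $t$, and apply Theorem~\ref{thm:HKL} to $(B_n)_{n\in\Nt}$ to obtain an injection $\Nt\hookrightarrow\NN[t]\times\Mt$. The delicate point---promoting this injection into a genuine permutative equivalence with a subbasis of $\BB_u^t$---is settled by invoking universality of $\BB$ backwards through the first part's equivalence $(\vv_m)\sim(\uu_m)$: universality forces each compressed block $\tilde{\xx}_n$ to collapse, up to a uniformly controlled scalar, onto a single peaking atom $\uu_{m_n}$ with $m_n\in B_n$, and the Hall--K\"onig injection labels these peaks as pairs $(i_n,m_n)\in\NN[t]\times\Mt$, thereby realizing $\BB$ as a subbasis of $\BB_u^t$.
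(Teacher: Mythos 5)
The central step of your argument --- producing a threshold $\delta$ and an integer $k$ from the anti-Euclidean hypothesis --- is where the proposal breaks down, and it does so in two ways. First, the sparsity you aim for is not where the difficulty lies: once both bases are unconditional and semi-normalized, the sums $\sum_n|c_{m,n}|$ and $\sum_m|c_{m,n}|$ are uniformly bounded (choose signs in $\sum_n \epsilon_n\, \xx_n^*(\uu_m)\,\xx_n$ and test against $\uu_m^*$, and symmetrically), so for any fixed $\delta>0$ the sets $A_m$ and $B_n$ automatically have at most $C/\delta$ elements; no anti-Euclideanness is needed for that. What genuinely requires the anti-Euclidean hypothesis, combined with $L$-convexity via a Maurey--Khintchine-type argument for $L$-convex quasi-Banach lattices, is the claim you pass over in a clause: that the compression $\vv_m=\sum_{n\in A_m}a_{m,n}\xx_n$ remains equivalent to $\uu_m$, i.e.\ that $\bigl|\sum_{n\in A_m}c_{m,n}\bigr|$ is bounded below uniformly in $m$ so that a perturbation lemma applies. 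Since $\sum_n c_{m,n}=1$ with the mass possibly spread over arbitrarily many coefficients each below any fixed threshold, the assertion that ``the tail $\uu_m-\vv_m$ is controlled via $L$-convexity'' is precisely the theorem to be proved; it is the content of \cite{AKL2004}*{Theorem 3.4} (the quasi-Banach extension of Casazza--Kalton's structure theorem for complemented basic sequences in lattice anti-Euclidean spaces), which the paper simply invokes at this point. Your dichotomy sketch (failure would yield uniformly complemented copies of $\ell_2^N$ in the envelope) is the right heuristic, but it is attached to the wrong conclusion and is nowhere carried out.

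There are two further structural problems. The sets $A_m$ produced by this machinery are not pairwise disjoint --- each $n$ may lie in up to $s$ of them --- so $(\vv_m)_{m\in\Mt}$ is a well complemented block basic sequence of $\BB^s$ (after distributing the overlaps among the $s$ copies), not of $\BB$; the Hall--K\"onig lemma as stated gives a system of distinct representatives rather than the partition of $\Mt$ into classes with mutually disjoint supports that you invoke, and universality must then be applied to $\BB^s$, which requires knowing that universality passes to finite powers (\cite{AlbiacAnsorena2020}*{Proposition 3.4}), not just to $\BB$. Finally, your route to the second conclusion via ``invoking universality of $\BB$ backwards'' is not justified: universality of $\BB$ concerns block basic sequences taken with respect to $\BB$, and says nothing about realizing $\BB$ inside powers of $\BB_u$. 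The efficient way, and the one the paper takes, is to note that once $\BB_u$ is permutatively equivalent to a subbasis of $\BB^s$ it inherits $L$-convexity and universality for well complemented block basic sequences from $\BB^s$, while its envelope remains anti-Euclidean, so the entire first half can be rerun with the roles of the two bases exchanged to produce the integer $t$.
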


\begin{proof} Since $\BB_u$ is lattice anti-Euclidean, \cite{AKL2004}*{Theorem 3.4} yields that $\BB_u$ is permutatively equivalent to a well complemented block basic sequence of $\BB^s$ for some $s\in\NN$. By \cite{AlbiacAnsorena2020}*{Proposition 3.4}, $\BB^s$ is universal for well complemented block basic sequences so that $\BB_u$ is permutatively equivalent to a subbasis of $\BB^s$. Since $\BB^s$ inherits the convexity from $\BB$, the basis $\BB_u$ is L-convex and universal for well complemented block basic sequences. Switching the roles of $\BB$ and $\BB_u$ yields the conclusion of the lemma.
\end{proof}

\begin{remark} A remark on the inherited order structure in a quasi-Banach lattice is in order here. Kalton showed in \cite{Kalton1984b}*{Theorem 4.2} that every unconditional basic sequence $\BB_0$ of a quasi-Banach space with an L-convex unconditional basis $\BB$ is L-convex. This argument would have, indeed, simplified the proof of Lemma~\ref{lem:keytechniquebis}. However, we wanted to make the point that the validity of the lemma does not depend on such a deep theorem as Kalton's.
\end{remark}

We are ready to prove the main result of this section.

\begin{theorem}\label{thm:keytechniquebis} Let $\XX$ be a quasi-Banach space whose Banach envelope is anti-Euclidean.
Suppose $\BB$ is an unconditional basis for $\XX$ such that:
\begin{enumerate}
\item[(i)] The lattice structure induced by $\BB$ in $\XX$ is L-convex;
\item[(ii)] $\BB$ is universal for well complemented block basic sequences; and
\item[(iii)] $\BB\sim \BB^2$.
\end{enumerate}
Then $\XX$ has a unique unconditional basis up to permutation.
\end{theorem}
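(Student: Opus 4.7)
The plan is to let $\BB_u$ be any unconditional basis of $\XX$ and show that $\BB_u \sim \BB$, by feeding Lemma~\ref{lem:keytechniquebis} into the ``square-root'' machinery of Section~\ref{sect:SQRTUB} and closing up with the Schr\"oder--Bernstein principle for unconditional bases (Theorem~\ref{thm:SBUB}).

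First I would apply Lemma~\ref{lem:keytechniquebis}, whose three assumptions match hypothesis (i), hypothesis (ii), and the standing anti-Euclidean assumption on the Banach envelope. This produces positive integers $s,t$ such that $\BB_u$ is permutatively equivalent to a subbasis of $\BB^s$ and $\BB$ is permutatively equivalent to a subbasis of $\BB_u^t$.

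Next I would exploit hypothesis (iii) to collapse all powers of $\BB$. Taking $i$-fold direct sums of the permutative equivalence $\BB \sim \BB^2$ and iterating yields $\BB \sim \BB^{2^k}$ for every $k\ge 0$. Given $r\ge 1$, choose $k$ with $2^k\ge r$; then $\BB$ is trivially permutatively equivalent to a subbasis of $\BB^r$, and $\BB^r$ is trivially permutatively equivalent to a subbasis of $\BB^{2^k}\sim \BB$. Theorem~\ref{thm:SBUB} now gives $\BB^r\sim \BB$ for every $r\ge 1$; in particular $\BB^s\sim \BB$ and $\BB^t\sim \BB$. Composing the bijection $\BB\sim \BB^s$ with the embedding $\BB_u\hookrightarrow \BB^s$ already delivers that $\BB_u$ is permutatively equivalent to a subbasis of $\BB$.

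The converse inclusion is the subtler half and is where Section~\ref{sect:SQRTUB} enters. From $\BB\sim\BB^t$ and $\BB$ being permutatively equivalent to a subbasis of $\BB_u^t$ we get that $\BB^t$ is permutatively equivalent to a subbasis of $\BB_u^t$; Theorem~\ref{lem:SQRTSubases} then promotes this to $\BB$ being permutatively equivalent to a subbasis of $\BB_u$ (the case $t=1$ is already immediate). A final invocation of Theorem~\ref{thm:SBUB} to the two-sided subbasis embeddings between $\BB$ and $\BB_u$ closes the argument.

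The conceptual heart of the proof, and the one step that is not routine bookkeeping, is the combination of hypothesis (iii) with Theorem~\ref{lem:SQRTSubases}: condition (iii) is exactly what lets us replace the large power $\BB_u^t$ produced by Lemma~\ref{lem:keytechniquebis} by $\BB_u$ itself, and the square-root theorem of the previous section is the tool that legalizes this replacement. Once this passage from ``powers'' to ``single copies'' is available, the theorem reduces to an application of the Schr\"oder--Bernstein principle for unconditional bases.
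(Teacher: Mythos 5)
Your proposal is correct and takes essentially the same route as the paper's proof: apply Lemma~\ref{lem:keytechniquebis} to get the two subbasis embeddings into powers, use hypothesis (iii) to collapse $\BB^s\sim\BB$ and $\BB^t\sim\BB$, invoke Theorem~\ref{lem:SQRTSubases} to pass from $\BB^t\hookrightarrow\BB_u^t$ to $\BB\hookrightarrow\BB_u$, and finish with Theorem~\ref{thm:SBUB}. The only cosmetic difference is that you obtain $\BB^r\sim\BB$ via $\BB\sim\BB^{2^k}$ plus Schr\"oder--Bernstein, where the paper implicitly uses the equally valid direct induction $\BB^{r+1}\sim\BB^r\oplus\BB\sim\BB^2\sim\BB$.
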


\begin{proof} Let $\BB_u$ be another unconditional basis of $\XX$. Since $\BB^r \sim \BB$ for every $r\in\NN$, applying Lemma~\ref{lem:keytechniquebis} yields that $\BB_u$ is permutatively equivalent to a subbasis of $\BB$ and that $\BB^t$ is permutatively equivalent to a subbasis of $\BB_u^t$ for some $t\in\NN$. Combining Theorem~\ref{lem:SQRTSubases} with Theorem~\ref{thm:SBUB} yields $\BB_u\sim\BB$.
\end{proof}

Theorem~\ref{lem:SQRTSubases} becomes instrumental in reaching the conclusion of the previous theorem. Indeed, without it, and under the same hipotheses as in Theorem~\ref{thm:keytechniquebis}, we would have only been able to guarantee that given another unconditional basis $\BB_u$ of $\XX$, $\BB_u$ is permutatively equivalent to a subbasis of $\BB$ and that $\BB$ is permutatively equivalent to a subbasis of some $s$-fold product of $\BB_u$. Thanks to Theorem~\ref{lem:SQRTSubases} we can close close the ``gap" between $\BB$ and $\BB_u$ and arrive at the permutative equivalence of the two bases. Although this gap might seem small, we would like to emphasize that in the lack of Theorem~\ref{thm:keytechniquebis} the specialists were forced to use additional properties of $\BB$ to infer that $\BB$ is the unique unconditional basis of $\XX$. For instance, in the proof that $\ell_1(\ell_p)$, $0<p<1$, has a unique unconditional basis up to permutation, the authors used that all subbases of the canonical basis of $\ell_1(\ell_p)$ are permutatively equivalent to their square (see \cite{AKL2004}).

\section{Applicability of our scheme to anti-Euclidean spaces}\label{Sec:ApplAnti-Euclidean}

\noindent Most anti-Euclidean spaces scattered through the literature with a unique unconditional basis (up to permutation) fulfil the hypotheses of Theorem~\ref{thm:keytechniquebis}. This can be checked on by looking up the corresponding references contained herein. However, with the aim to be as self-contained as possible and for the convenience of the reader we next survey how to verify the hypotheses of Theorem~\ref{thm:keytechniquebis} in all known spaces (Banach and non-Banach) with a unique unconditional basis and some other new ones. The spaces in this section and the next will be the protagonists of Section~\ref{DirectSumsUnc}, where we will combine them to get the uniqueness of unconditional basis up to permutation of their finite direct sums.

In what follows, the symbol $\alpha_i\lesssim \beta_i$ for $i\in I$ means that the families of positive real numbers $(\alpha_i)_{i\in I}$ and $(\beta_i)_{i\in I}$ verify $\sup_{i\in I}\alpha_i/\beta_i <\infty$. If $\alpha_i\lesssim \beta_i$ and $\beta_i\lesssim \alpha_i$ for $i\in I$ we say $(\alpha_i)_{i\in I}$ are $(\beta_i)_{i\in I}$ are equivalent, and we write $\alpha_i\approx \beta_i$ for $i\in I$. 

\subsection{The space $\bm{\ell_{1}}$} The simplest example of an anti-Euclidean space is $\ell_1$. Since the canonical basis is perfectly homogeneous, it is universal for well complemented block basic sequences. Finally, since it is symmetric, 
it is equivalent to its square. 

\subsection{Orlicz sequence spaces}\label{ex:Orlicz}
An \textit{Orlicz function} will be a right-continuous increasing function $\varphi\colon[0,\infty)\to[0,\infty)$ such $\varphi(0)=0$, $\varphi(1)=1$ and $\varphi(s+t)\le C (\varphi(s) + \varphi(t))$ for some constant $C$ and every $s$, $t\ge 0$. The \textit{Orlicz space} $\ell_\varphi$ is the space associated to the Luxembourg quasi-norm defined from the modular $(a_n)_{n=1}^\infty \mapsto \sum_{n=1}^\infty \varphi(|a_n|)$. Our assumptions on $\varphi$ yield that $\ell_\varphi$ is a symmetric sequence space. Kalton proved in \cite{Kalton1977} that if $\varphi$ satisfies
\begin{equation}\label{eq:Orlicz1}
t\lesssim \varphi(t), \quad 0\le t \le 1,
\end{equation}
and
\begin{equation}\label{eq:OrliczK21}
\Lambda_\varphi:=\lim_{\varepsilon\to 0^+} \inf_{0<s<1}\frac{-1}{\log \varepsilon}\int_\varepsilon^1 \frac{\varphi(sx)}{sx^2}\, dx=\infty,
\end{equation}
then $\ell_\varphi$ has a unique unconditional basis up to permutation. It is easy to show that \eqref{eq:Orlicz1} implies that the Banach envelope of $\ell_\varphi$ is anti-Euclidean, and it is implicit in \cite{Kalton1977} that if \eqref{eq:Orlicz1} and \eqref{eq:OrliczK21} hold, then the unit vector system of $\ell_\varphi$ is universal for well complemented block basic sequences. For the sake of completeness and further reference, we record these results and sketch a proof of them.

\begin{proposition}[cf. \cite{Kalton1977}] Let $\varphi$ be an Orlicz function such that both \eqref{eq:Orlicz1} and \eqref{eq:OrliczK21} hold. Then:
\begin{itemize}
\item[(i)] The inclusion map from $\ell_\varphi$ in $\ell_1$ is an envelope map.
\item[(ii)] The unit vector system of $\ell_\varphi$ has the peaking property.
\end{itemize}
\end{proposition}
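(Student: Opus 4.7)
For part (i) my plan is to verify the hypothesis of Lemma~\ref{lem:BEl1} and then read off the conclusion. By \eqref{eq:Orlicz1} there is $c>0$ with $\varphi(t)\ge c\,t$ for every $t\in[0,1]$. For any finitely supported scalar sequence $(a_n)$ with Luxembourg quasi-norm at most $1$, the modular is at most $1$ and $|a_n|\le 1$, so
\[
c\sum_n |a_n|\le \sum_n \varphi(|a_n|)\le 1.
\]
Thus the canonical basis of $\ell_\varphi$ dominates the canonical basis of $\ell_1$, Lemma~\ref{lem:BEl1} identifies the Banach envelope with $\ell_1$ via the coefficient transform, and for a sequence space this transform is literally the inclusion.

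For part (ii), the plan is a contradiction argument combining the reduction from Lemma~\ref{lem:k2two} with the integral quantity $\Lambda_\varphi$. By part (i) and Lemma~\ref{lem:k2two}, every semi-normalized well complemented block basic sequence $\BB_u=(\uu_m)_{m\in\Mt}$ of the canonical basis is equivalent to one for which $(\Ind_{\supp(\uu_m)}^*)_{m\in\Mt}$ is a sequence of good projecting functionals. Under this choice the peaking condition \eqref{eq:gh} simplifies to
\[
\inf_{m\in\Mt} \sup_{n\in\supp(\uu_m)}|\ee_n^*(\uu_m)|>0.
\]
Suppose this fails; then, after passing to a subsequence, we obtain disjointly supported semi-normalized blocks $\uu_m=\sum_{n\in A_m} a_{m,n}\,\ee_n$ with $\max_n|a_{m,n}|\to 0$ whose support-indicator projections are uniformly bounded.

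The core of the argument is to turn these blocks into a quantitative witness for $\Lambda_\varphi<\infty$. I would first \emph{flatten} the blocks by decomposing each $A_m$ into dyadic level sets of $|a_{m,n}|$ and passing to a dominant level set; after a further subsequence and a small controlled perturbation the nonzero coefficients of $\uu_m$ are essentially constant of size $c_m\to 0^+$, with $|A_m|\varphi(c_m)\approx 1$. Using the uniform upper bound provided by the indicator-type projection, one then estimates $\|s\,\uu_m\|_\varphi$ as $s$ ranges over a small positive interval and recognizes, up to absolute constants, the integrand defining $\Lambda_\varphi$; the resulting uniform bound in $m$ contradicts $\Lambda_\varphi=\infty$. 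The explicit integral identity behind this last step is carried out in \cite{Kalton1977}, to which I would refer for the bookkeeping.

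The main obstacle is the flattening step: the well-complementation constants must survive the passage from $\uu_m$ to its flattened replacement, and this is precisely where the reduction to indicator-type projecting functionals via Lemma~\ref{lem:k2two} becomes essential, since the support indicator is stable under restriction of the block to one of its level sets.
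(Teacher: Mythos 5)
Your part (i) is correct and is essentially the paper's argument: \eqref{eq:Orlicz1} gives $\ell_\varphi\subseteq\ell_1$, i.e.\ the unit vector system dominates the $\ell_1$ basis, and Lemma~\ref{lem:BEl1} does the rest. Your reduction at the start of (ii) is also sound: by (i) and Lemma~\ref{lem:k2two} one may take $(\Ind^*_{\supp(\uu_m)})_m$ as good projecting functionals, so \eqref{eq:gh} collapses to $\inf_m\sup_n|\ee_n^*(\uu_m)|>0$, exactly as the paper does in the Lorentz case.

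The core of your part (ii), however, has a genuine gap. Your plan is to flatten each block $\uu_m$ by decomposing $\supp(\uu_m)$ into dyadic level sets of $|a_{m,n}|$ and ``passing to a dominant level set.'' No such dominant level set need exist: since $\max_n|a_{m,n}|\to 0$, the number of nontrivial levels in $\uu_m$ is unbounded in $m$, so no pigeonhole argument produces a single level carrying a proportion of the modular (or of the $\ell_1$-mass, which is what you need to keep the restricted indicator functional normalized) that is bounded below uniformly in $m$. Without that, the restricted blocks $S_A(\uu_m)$ need not stay semi-normalized and the renormalized projections need not stay uniformly bounded, so the ``small controlled perturbation'' is not controlled. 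Deferring ``the bookkeeping'' to \cite{Kalton1977} does not close this, because you have not pointed to a statement there that applies to your flattened configuration; the analytic content you are trying to reconstruct is precisely Kalton's Theorem 6.5. The paper's proof avoids the construction entirely: if the peaking condition fails, \cite{Kalton1977}*{Theorem 6.5} directly yields a complemented basic sequence of $\ell_\varphi$ spanning a locally convex subspace $\YY$; part (i) together with \cite{AlbiacAnsorena2020}*{Lemma 2.1} shows the inclusion $\ell_\varphi\hookrightarrow\ell_1$ restricts to an isomorphism on $\YY$; and \cite{Kalton1977}*{Theorem 5.3} then forces $\Lambda_\varphi<\infty$, contradicting \eqref{eq:OrliczK21}. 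Either quote those two theorems of Kalton as the paper does, or supply the level-set selection argument in full; as written, the middle of your proof does not go through.
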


\begin{proof}Since $\ell_1$ is the Orlicz sequence space associated to the function $t\mapsto t$, we have $\ell_\varphi\subseteq\ell_1$. Then, (i) follows from Lemma~\ref{lem:BEl1}.

Assume by contradiction that $\BB_u=(\uu_m)_{m \in \Mt}$ is a a well complemented block basic sequence of $\ell_\varphi$, that $(\uu_m^*)_{m \in \Mt}$ is a family of well complemented projecting functionals for $\BB_u$, but that
\[
\inf_{m \in \Mt} \sup_{n\in\NN} |\uu_m^*(\ee_n)| \, |\ee_n^*(\uu_m)|=0.
\]
Then, by \cite{Kalton1977}*{Theorem 6.5}, $\ell_\varphi$ has a a complemented basic sequence $\BB_y$ such that $\YY=[\BB_y]$ is locally convex. Using (i) and \cite{AlbiacAnsorena2020}*{Lemma 2.1}, it follows that the restriction of the inclusion map of $\ell_\varphi$ in $\ell_1$ to $\YY$ is an isomorphism. Therefore, by \cite{Kalton1977}*{Theorem 5.3}, we reach the absurdity that $\Lambda_\varphi<\infty$.
\end{proof}

\subsection{Lorentz sequence spaces}\label{ex:Lorentz}
Let $\ww=(w_n)_{n=1}^\infty$ be a \textit{weight}, i.e., a sequence of positive scalars, and $0<p<\infty$. Suppose that $\ww$ decreases to zero. The \textit{Lorentz space} $d(\ww,p)$ is the quasi-Banach space consisting of all $f=(a_n)_{n=1}^\infty\in\FF^\NN$ such that
\[
\Vert f \Vert_{d(\ww,p)} =\sup_{\pi\in\Pi} \left(\sum_{n=1}^\infty | a_{\pi(n)}|^p \, w_n \right)^{1/p}<\infty,
\]
where $\Pi$ is the set of all permutations of $\NN$. The unit vector system is a symmetric basis of $d(\ww,p)$. It was proved in \cite{AlbiacLeranoz2008} that if the weight fulfils the condition
\begin{equation}\label{eq:lorentk2}
\inf_{k\in \NN} \frac{\displaystyle\sum_{n=1}^k w_n}{k^{p}}>0,
\end{equation}
then $d(\ww,p)$ has a unique unconditional basis up to permutation. Next, we deduce this result by combining Theorem~\ref{thm:keytechniquebis} with arguments from \cite{AlbiacLeranoz2008}.

\begin{proposition}[cf. \cite{AlbiacLeranoz2008}] Let $0<p<1$ and $\ww=(w_n)_{n=1}^\infty$ decreasing to zero. Then $d(\ww,p)\subseteq\ell_1$ if and only if \eqref{eq:lorentk2} holds. Moreover, if \eqref{eq:lorentk2} holds, then
\begin{itemize}
\item[(i)] the Banach envelope of $d(\ww,p)$ is $\ell_1$ via the inclusion map, and
\item[(ii)] the unit vector system of $d(\ww,p)$ has the peaking property.
\end{itemize}
\end{proposition}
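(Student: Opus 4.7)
The plan is to follow closely the template used for the Orlicz proposition written just above in the paper, replacing the references to \cite{Kalton1977} by the corresponding statements from \cite{AlbiacLeranoz2008}.

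\textbf{Characterising the inclusion $d(\ww,p)\subseteq\ell_1$.} For necessity I would test the Lorentz quasi-norm on the indicator $\Ind_{[1,k]}$: since $\|\Ind_{[1,k]}\|_{d(\ww,p)}=\bigl(\sum_{n=1}^{k}w_n\bigr)^{1/p}$ while $\|\Ind_{[1,k]}\|_{\ell_1}=k$, a bounded inclusion forces $k\lesssim \bigl(\sum_{n=1}^k w_n\bigr)^{1/p}$, i.e.\ \eqref{eq:lorentk2}. For sufficiency, by symmetry I would reduce to non-negative, non-increasing $f=(a_n)_{n=1}^\infty$ with $a_n\to 0$ and use Abel summation. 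Writing $W_k=\sum_{n=1}^k w_n$, one obtains
\[
\sum_n a_n^p w_n=\sum_n W_n\,(a_n^p-a_{n+1}^p)\;\ge\; c\sum_n n^p(a_n^p-a_{n+1}^p)=c\sum_n a_n^p\,(n^p-(n-1)^p),
\]
and then a standard dyadic decomposition of the sum $\sum_n a_n$ combined with the $p$-subadditivity $(\sum x_n)^p\le\sum x_n^p$ (which is available because $p<1$) yields $\sum_n a_n\lesssim\bigl(\sum_n a_n^p w_n\bigr)^{1/p}$.

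\textbf{Part (i).} The estimate just established says exactly that the unit vector system of $d(\ww,p)$ dominates the canonical basis of $\ell_1$, so Lemma~\ref{lem:BEl1} gives that the Banach envelope of $d(\ww,p)$ is $\ell_1$ realised by the coefficient transform, which on the unit vector basis is nothing but the inclusion map.

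\textbf{Part (ii).} The argument mirrors, mutatis mutandis, the peaking proof displayed above for the Orlicz case. Suppose towards a contradiction that $\BB_u=(\uu_m)_{m\in\Mt}$ is a semi-normalized well complemented block basic sequence of the unit vector basis of $d(\ww,p)$ for which
\[
\inf_{m\in\Mt}\,\sup_{n\in\NN}\;|\uu_m^*(\ee_n)|\,|\ee_n^*(\uu_m)|=0
\]
under every admissible choice of good projecting functionals $(\uu_m^*)_{m\in\Mt}$. The analogue in the Lorentz setting of \cite{Kalton1977}*{Theorem 6.5}, available in \cite{AlbiacLeranoz2008}, produces under this failure of peaking a well complemented basic sequence $\BB_y$ of $d(\ww,p)$ whose closed linear span $\YY$ is locally convex. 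Combining part (i) with \cite{AlbiacAnsorena2020}*{Lemma 2.1}, the restriction of the inclusion $d(\ww,p)\hookrightarrow\ell_1$ to $\YY$ is an isomorphism, so $\BB_y$ is equivalent to the canonical basis of $\ell_1$; but the Lorentz analogue of \cite{Kalton1977}*{Theorem 5.3} (also proved in \cite{AlbiacLeranoz2008}) shows that this can occur only when $\inf_k W_k/k^p=0$, contradicting \eqref{eq:lorentk2}.

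The hard step will be (ii): locating in \cite{AlbiacLeranoz2008} the precise counterparts to Kalton's structural Theorems~5.3 and~6.5 and checking that \eqref{eq:lorentk2} is exactly the condition whose failure would be forced by the presence of a locally convex, well complemented block subspace. Once those two ingredients are pinned down, the skeleton of the proof is essentially identical to the one supplied for the Orlicz case, and the remaining computations (the indicator test, the Abel/dyadic estimate and the invocation of Lemma~\ref{lem:BEl1}) are routine.
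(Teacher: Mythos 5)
Your treatment of the equivalence $d(\ww,p)\subseteq\ell_1\iff\eqref{eq:lorentk2}$ and of part (i) is correct. The necessity test on indicators is exactly the paper's. For sufficiency the paper also passes through the auxiliary weight $\uu_p=(n^p-(n-1)^p)_{n=1}^\infty$ (your Abel summation shows precisely that \eqref{eq:lorentk2} gives $d(\ww,p)\subseteq d(\uu_p,p)$), but it then deduces $d(\uu_p,p)\subseteq\ell_1$ from a factorization through the weak Lorentz space $d_\infty(\uu_p,p)$, whereas you use a dyadic-block estimate together with $\Vert\cdot\Vert_1\le\Vert\cdot\Vert_p$ for $p<1$. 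Your version of this step is more elementary and self-contained, and it is sound. Part (i) then follows from Lemma~\ref{lem:BEl1} in both arguments.

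Part (ii) is where the proposal has a genuine gap. You reduce everything to ``Lorentz analogues'' of \cite{Kalton1977}*{Theorems 5.3 and 6.5} --- a structural theorem extracting a well complemented locally convex subspace from a failure of peaking, and a characterization of when such a subspace can exist --- which you assert are available in \cite{AlbiacLeranoz2008} but which you have neither located nor stated; you yourself flag this as the hard step. Nothing of that form is invoked by the paper, and you cannot take its existence for granted: proving those two statements in the Lorentz setting would amount to redoing a substantial part of Kalton's Orlicz-space analysis, so as written the argument does not close. The paper's route is different and much shorter: since by the first part the unit vector system of $d(\ww,p)$ dominates the canonical $\ell_1$-basis, Lemma~\ref{lem:k2two} allows one to replace the good projecting functionals by $(\Ind^*_{\supp(\uu_m)})_{m\in\Mt}$, after which the peaking quantity collapses to $\sup_{n}|\ee_n^*(\uu_m)|$, i.e.\ to $\Vert\Fou(\uu_m)\Vert_\infty$, and the required uniform lower bound on these sup-norms is exactly what the proof of \cite{AlbiacLeranoz2008}*{Theorem 2.4} supplies. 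You should either carry out this Lemma~\ref{lem:k2two} reduction or actually prove the two structural statements you are borrowing; at present (ii) rests on unverified citations.
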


\begin{proof}For $k\in\NN$ write $s_k=\sum_{n=1}^k w_n$. Assume that $d(\ww,p)\subseteq\ell_1$ and let $C$ be the norm of the inclusion map. If $|A|=k$ we have 
\[\Vert \Ind_A\Vert_1=k,\quad \text{and} \quad \Vert \Ind_A\Vert_{\ww,p}=s_k^{1/p}.\] Thus $k\le C s_k^{1/p}$ for every $k\in\NN$.

The weak-Lorentz space $d_\infty(\uu,p)$ associated to a weight $\uu=(u_n)_{n=1}^\infty$ and $0<p<\infty$ consists of all sequences $f\in c_0$ whose non-increasing rearrangement $(a_k^*)_{k=1}^\infty$ satisfies
\[
\Vert f \Vert_{d_\infty(\uu,p)} =\sup_k\left(\sum_{n=1}^k u_n\right)^{1/p} a_k^*<\infty.
\]
We have $d_\infty(\uu,p)\subseteq d(\uu,p)$ for every $0<p<\infty$ and every weight $\uu$.
If $\uu_p=(n^p-(n-1)^p)_{j=1}^\infty$ the rearrangement inequality and the mere definition of the spaces yields
\[
[d(\uu_p,p)]^p \cdot [d_\infty(\uu_p,p)]^{1-p} \subseteq \ell_1.
\]
We also have the obvious inclusion
\[
d(\uu_p,p) \subseteq [d(\uu_p,p)]^p \cdot [d(\uu_p,p)]^{1-p}.
\]
Summing up, we obtain $d(\uu_p,p)\subseteq\ell_1$.

Assume that $\ww$ fulfils \eqref{eq:lorentk2}. We deduce that $ d(\ww,p) \subseteq d(\uu_p,p)$. Therefore, $d(\ww,p) \subseteq \ell_1$. Then, (i) follows from Lemma~\ref{lem:BEl1}. To prove (ii), we pick a semi-normalized well complemented block basic sequence $(\uu_m)_{m \in \Mt}$ with good projecting functionals $(\uu_m^*)_{m \in \Mt}$. By Lemma~\ref{lem:k2two}, we can suppose that $\uu_m^*= \Ind^*_{\supp(\uu_m)}$ so that
\[
\sup_{n\in \NN} | \uu_m^*(\ee_n)| \, |\ee_n^*(\uu_m)|= \sup_{n\in \NN} |\ee_n^*(\uu_m)|.
\]
Finally, note that the proof of \cite{AlbiacLeranoz2008}*{Theorem 2.4} gives
\[
\inf_{m \in \Mt} \sup_{n\in \NN} |\ee_n^*(\uu_m)|>0.\qedhere
\]
\end{proof}

\subsection{Tsirelson's space} Casazza and Kalton established in \cite{CasKal1998} the uniqueness of unconditional basis up to permutation of Tsirelson's space $\Ts$ and its complemented subspaces with unconditional basis as a byproduct of their study of complemented basic sequences in lattice anti-Euclidean Banach spaces. Their result answered a question by Bourgain et al.\ (\cite{BCLT1985}), who had proved the uniqueness of unconditional basis up to permutation of the $2$-convexifyed Tsirelson's space $\Ts^{(2)}$ of $\Ts$ (see Example~\ref{pconvTsi} in \S~\ref{SectStrongAbs} for the definition). Unlike $\Ts^{(2)}$, which is ``highly" Euclidean, the space $\Ts$ is anti-Euclidean. To see the latter requires the notion of dominance, introduced in \cite{CasKal1998}. 

Let $\BB=(\xx_n)_{n=1}^\infty$ be a (semi-normalized) unconditional basis of a quasi-Banach space $\XX$. Given $f$, $g\in\XX$, we write $f\prec g$ if $m<n$ for all $m\in\supp(f)$ and $n\in\supp(g)$. The basis $\BB$ is said to be \textit{left (resp.\ right) dominant} if there is a constant $C$ such that whenever $(f_i)_{i=1}^N$ and $(g_i)_{i=1}^N$ are disjointly supported families with $f_i\prec g_i$ (resp. $g_i\prec f_i$) and $\Vert f_i\Vert \le \Vert g_i\Vert$ for all $i\in\NN[N]$, then $\Vert \sum_{i=1}^N f_i\Vert \le C \Vert \sum_{i=1}^N g_i\Vert$. If $\XX$ is a Banach space with a left (resp.\ right) dominant unconditional basis $\BB$ there is a unique $r=r(\BB)\in[1,\infty]$ such that $\ell_r$ is finitely block representable in $\XX$. In the case when $r(\BB)\in\{1,\infty\}$, $\XX$ is anti-Euclidean (see \cite{CasKal1998}*{Proposition 5.3}).

The canonical basis of the \textit{Tsirelson space} $\Ts$ is right dominant \cite{CasKal1998}*{Proposition 5.12}, and $r(\Ts)=1$. Moreover, by \cite{CasKal1998}*{Proposition 5.5} and \cite{CasShu1989}*{page 14}, the canonical basis (as well as each of its subases) is equivalent to its square. In our language, \cite{CasKal1998}*{Theorem 5.6} says that every left (resp.\ right) dominant unconditional basis is universal for well complemented block basic sequences. Finally, since it is locally convex, $\Ts$ is trivially an L-convex lattice. 

\subsection{Bourgin-Nakano spaces.} Let $\Nt$ be a countable set. A \textit{Bourgin-Nakano index} is a family $(p_n)_{n \in \Nt}$ in $(0,\infty)$ with $p=\inf_n p_n>0$. The \textit{Bourgin-Nakano space} $\ell(p_{n})$ is the quasi-Banach space built from the modular
\[
m_{(p_n)}\colon\FF^{\Nt} \to[0,\infty), \quad (a_n)_{n \in \Nt} \mapsto \sum_{n \in \Nt} |a_n|^{p_n}.
\]
Note that, by the Monotone Convergence Theorem, the closed unit ball of $\ell(p_{n})$ is the set
\[
B_{\ell(p_{n})}=\{ f \in\FF^{\Nt} \colon m_{(p_n)}(f)\le 1\}.
\]
If we endow $\ell(p_{n})$ with the natural ordering, it becomes a $p$-convex quasi-Banach lattice. The separable part $h(p_{n})=[\ee_n \colon n \in \Nt]$ of $\ell(p_{n})$ is a sequence space. We have $\ell(p_{n})=h(p_{n})$ if and only if $\sup_n p_n<\infty$.

These spaces where introduced by Bourgin \cite{Bourgin1943} in the particular case when $p_n\le 1$ for every $n\in\Nt$. Nakano \cite{Nakano1950} studied the case when $p_n\ge 1$ for every $n \in \Nt$, so that the arising spaces are locally convex, i.e., Banach spaces.

Let us record some results on Bourgin-Nakano spaces of interest for the purposes of this paper.
\begin{lemma}\label{lem:dominancyBN}Let $(p_{n})_{n \in \Nt}$ and $(q_m)_{m \in \Mt}$ be Bourgin-Nakano indexes. Let $\BB_u=(\uu_j)_{j=1}^\infty$ and $\BB_v=(\vv_j)_{j=1}^\infty$ be normalized block basic sequences in $\ell(p_{n})$ and $\ell(q_{n})$ respectively. Suppose that $p_n \le q_m$ for all $(n,m)\in\supp(\uu_j)\times\supp(\vv_j)$ and all $j\in\NN$. Then $\BB_u$ $1$-dominates $\BB_v$.
\end{lemma}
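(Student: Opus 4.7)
The plan is to verify $1$-dominance directly from the Luxembourg quasi-norm description of $\ell(p_{n})$. By $1$-homogeneity of the quasi-norm together with the identification $B_{\ell(p_{n})}=\{f\colon m_{(p_{n})}(f)\le 1\}$ (and likewise for $q_{m}$), it suffices to prove the modular implication: if $m_{(p_{n})}(\sum_{j}c_{j}\uu_{j})\le 1$ then $m_{(q_{m})}(\sum_{j}c_{j}\vv_{j})\le 1$. The disjointness of supports within each block basic sequence decouples both modulars as sums over $j$, so the whole question reduces to proving, for each individual $j$, the per-block estimate
\[
\sum_{m\in\supp(\vv_{j})}|c_{j}|^{q_{m}}|v_{j,m}|^{q_{m}}\;\le\;\sum_{n\in\supp(\uu_{j})}|c_{j}|^{p_{n}}|u_{j,n}|^{p_{n}}.
\]

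Two observations drive this per-block estimate. First, the hypothesis forces each $|c_{j}|\le 1$: indeed $|c_{j}|\uu_{j}\in B_{\ell(p_{n})}$, and since $\uu_{j}$ is normalized this yields $|c_{j}|\le 1$. Second, I introduce $p^{*}(j):=\max\{p_{n}:n\in\supp(\uu_{j})\}$ (well-defined because $\uu_{j}$ has finite support); by hypothesis $q_{m}\ge p^{*}(j)$ for every $m\in\supp(\vv_{j})$. Since $|c_{j}|\le 1$, the map $t\mapsto|c_{j}|^{t}$ is decreasing, and combining this with the sharp normalizations $\sum_{n}|u_{j,n}|^{p_{n}}=1=\sum_{m}|v_{j,m}|^{q_{m}}$ (equalities, not merely inequalities, by continuity of the modulars at finitely supported vectors) allows one to sandwich both sides of the per-block estimate around $|c_{j}|^{p^{*}(j)}$: namely
\[
\sum_{m}|c_{j}|^{q_{m}}|v_{j,m}|^{q_{m}}\le|c_{j}|^{p^{*}(j)}\sum_{m}|v_{j,m}|^{q_{m}}=|c_{j}|^{p^{*}(j)}=|c_{j}|^{p^{*}(j)}\sum_{n}|u_{j,n}|^{p_{n}}\le\sum_{n}|c_{j}|^{p_{n}}|u_{j,n}|^{p_{n}}.
\]
Summing over $j$ gives $m_{(q_{m})}(\sum_{j}c_{j}\vv_{j})\le m_{(p_{n})}(\sum_{j}c_{j}\uu_{j})\le 1$, which is the desired modular implication; unwinding the homogeneity then produces the operator $T\uu_{j}=\vv_{j}$ of norm at most $1$ on the closed span.

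The argument is elementary once the right reduction is identified, and I do not foresee any deeper obstacle. The only point that requires a moment of thought is the simultaneous use of $|c_{j}|\le 1$ (needed to make the monotonicity of $t\mapsto|c_{j}|^{t}$ point in the useful direction) and the exact normalizations of $\uu_{j}$ and $\vv_{j}$ (so that both modular sums are sandwiched \emph{sharply} around the single quantity $|c_{j}|^{p^{*}(j)}$). Both ingredients are built into the hypotheses of the lemma, via the constraint $m_{(p_{n})}(\sum_{j}c_{j}\uu_{j})\le 1$ and the finite-support assumption inherent to block basic sequences.
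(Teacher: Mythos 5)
Your proof is correct and takes essentially the same route as the paper's: reduce to the modular implication via the unit-ball description of the Luxembourg quasi-norm, note $|c_j|\le 1$, and interpolate each block through a single exponent lying between the $p_n$'s and the $q_m$'s (the paper picks an arbitrary such intermediate value $r_j$, whereas you take the specific choice $p^*(j)=\max_{n\in\supp(\uu_j)}p_n$, which is only a cosmetic difference). No gaps.
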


\begin{proof}Let $j\in \NN$. Pick $r_j\in[1,\infty)$ such that $p_n\le r \le q_m$ for every $n\in A_j:=\supp(\uu_j)$ and $m\in B_j:=\supp(\vv_j)$. Put $\uu_j=\sum_{n\in A_j} a_j\, \ee_j$ and $\vv_j=\sum_{n\in A_j} b_j\, \ee_j$. Since $\Vert\uu_j\Vert=\Vert \vv_j\Vert=1$, we have
\[
\sum_{n\in A_j} |a_j|^{p_n}=1=\sum_{m\in B_j} |b_m|^{q_m}.
\]

Let $f=\sum_{j=1}^\infty c_j \, \uu_j\in B_{\ell(p_n)}$. We have $|c_j|\le 1$ for every $j\in\NN$. Hence,
\begin{align*}
m_{(q_n)}\left(\sum_{j=1}^\infty c_j \, \vv_j\right)&=\sum_{j=1}^\infty \sum_{m\in B_j} |c_j|^{q_m} |b_m|^{q_m}\\
&\le \sum_{j=1}^\infty |c_j|^{r} \sum_{m\in B_j} |b_m|^{q_m}\\
&=\sum_{j=1}^\infty |c_j|^{r} \sum_{n\in A_j} |a_n|^{p_n}\\
&\le \sum_{j=1}^\infty \sum_{n\in A_j} |c_j|^{p_n} |a_n|^{p_n}\\
&\le 1.
\end{align*}
Therefore, $\sum_{j=1}^\infty c_j \, \uu_j\in B_{\ell(q_n)}$.
\end{proof}

\begin{proposition}[see \cite{CasKal1998}*{Proof of Theorem 5.8}]\label{prop:BNAE}Let $(p_n)_{n=1}^\infty$ be a non-increasing (resp. non-decreasing) Bourgin-Nakano index. Then, the unit vector system of $\ell(p_{n})$ is right (resp. left) dominant. Moreover, $r(\ell(p_{n}))=\lim_n p_n$.
\end{proposition}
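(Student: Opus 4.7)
The plan is to establish the two assertions separately, both via Lemma~\ref{lem:dominancyBN}. First I would handle the dominance claim, then identify $r(\ell(p_{n}))$ by sandwiching a tail of the canonical basis between two constant-exponent $\ell_r$-type bases.

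For the dominance claim, I would focus on the non-increasing case; the non-decreasing case is symmetric. Take disjointly supported families $(f_i)_{i=1}^N$ and $(g_i)_{i=1}^N$ in $\ell(p_{n})$ satisfying the positional relation and the norm inequality $\Vert f_i\Vert \le \Vert g_i\Vert$ from the relevant definition. After passing to the normalizations $\tilde f_i = f_i/\Vert f_i\Vert$ and $\tilde g_i = g_i/\Vert g_i\Vert$, the support relation together with the monotonicity of $(p_{n})$ places the exponents in $\supp(\tilde g_i)$ and $\supp(\tilde f_i)$ in the ordering required by Lemma~\ref{lem:dominancyBN}. Applying that lemma with both ambient spaces equal to $\ell(p_{n})$ then yields a $1$-dominance between $(\tilde f_i)$ and $(\tilde g_i)$. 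Restoring the scalar factors $\Vert f_i\Vert, \Vert g_i\Vert$ via the $1$-unconditionality of disjointly supported blocks in $\ell(p_{n})$ (together with $\Vert f_i\Vert \le \Vert g_i\Vert$) produces the desired inequality $\Vert \sum f_i\Vert \le \Vert \sum g_i\Vert$ with constant~$1$.

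For the identification $r(\ell(p_{n})) = \lim_{n}p_{n}$, set $r = \lim_{n}p_{n}$ (in $[1,\infty]$ under the Banach-space setting where $r(\BB)$ is defined). I would show $\ell_r$ is finitely block representable in $\ell(p_{n})$, so that uniqueness of $r(\BB)$ guaranteed by dominance identifies $r(\ell(p_{n}))$ with $r$. Fix $N \in \NN$ and $\varepsilon > 0$; convergence of $(p_{n})$ furnishes $n_0$ with $p_{n} \in (r-\varepsilon, r+\varepsilon)$ for every $n \ge n_0$. Two applications of Lemma~\ref{lem:dominancyBN} --- comparing the subbasis $(\ee_{n_0+i-1})_{i=1}^N$ of $\ell(p_{n})$ with the canonical basis of $\ell_{r-\varepsilon}^N$ in one direction and with that of $\ell_{r+\varepsilon}^N$ in the other, choosing the $(\uu_j), (\vv_j)$ roles each time so that the exponent condition of the lemma holds --- give the sandwich
\[
\left\Vert \sum_{i=1}^N a_i \ee_i\right\Vert_{\ell_{r+\varepsilon}} \le \left\Vert \sum_{i=1}^N a_i \ee_{n_0+i-1}\right\Vert_{\ell(p_{n})} \le \left\Vert \sum_{i=1}^N a_i \ee_i\right\Vert_{\ell_{r-\varepsilon}}
\]
for all scalars $(a_i)$. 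As $\varepsilon \to 0$, the outer norms converge to the $\ell_r^N$-norm on this fixed $N$-dimensional space, so $(\ee_{n_0+i-1})_{i=1}^N$ is $(1+o(1))$-equivalent to the canonical basis of $\ell_r^N$; arbitrariness of $N$ then yields finite block representability of $\ell_r$.

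The principal obstacle will be the $\varepsilon \to 0$ limit of the sandwich: one must verify that the $\ell_{r\pm\varepsilon}^N$-norms of a fixed vector approach the $\ell_r^N$-norm uniformly on the unit ball. This is a continuity-of-Luxembourg-norm statement in the exponent, obtainable via dominated convergence on the modular; the endpoint $r = \infty$ requires a separate but routine argument using $c_0^N$ as the limit space.
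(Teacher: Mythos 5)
Your overall strategy coincides with the paper's: the paper's entire proof of Proposition~\ref{prop:BNAE} is the single sentence ``It is a consequence of Lemma~\ref{lem:dominancyBN}'', and your first part is the natural way to fill that in (normalize, apply the lemma to the normalized blocks, restore the scalars by $1$-unconditionality of the unit vector system). Your identification of $r(\ell(p_{n}))$ by sandwiching a tail of the unit vector system between $\ell_{r-\varepsilon}^N$ and $\ell_{r+\varepsilon}^N$ is a correct supplement that the paper does not spell out; the $\varepsilon\to 0$ step is harmless because for fixed $N$ the norms $\Vert\cdot\Vert_{r\pm\varepsilon}$ converge to $\Vert\cdot\Vert_{r}$ uniformly on the $\ell_r^N$-sphere.

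The one place you wave your hands is, however, exactly where the argument, read against the paper's own definitions, does not close. For the chain $\Vert\sum_i f_i\Vert=\Vert\sum_i\Vert f_i\Vert\,\tilde f_i\Vert\le\Vert\sum_i\Vert g_i\Vert\,\tilde f_i\Vert\le\Vert\sum_i\Vert g_i\Vert\,\tilde g_i\Vert=\Vert\sum_i g_i\Vert$ to work, Lemma~\ref{lem:dominancyBN} must be applied with $\uu_j=\tilde g_j$ and $\vv_j=\tilde f_j$, i.e., you need the exponents on $\supp(\tilde g_i)$ to be less than or equal to those on $\supp(\tilde f_i)$. But in the paper's definition of \emph{right} dominant one has $g_i\prec f_i$, so for a \emph{non-increasing} index the exponents on $\supp(g_i)$ are the \emph{larger} ones --- the opposite of what the lemma requires. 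A two-valued index ($p_n=2$ for $n\le M$, $p_n=1$ for $n>M$) makes the failure concrete: $N$ unit vectors on the left have sum of norm $N^{1/2}$, $N$ unit vectors on the right have sum of norm $N$, so the right-dominance inequality of the paper's definition fails for any constant. The argument does close for the pairings (non-increasing, left dominant) and (non-decreasing, right dominant); the mismatch is an inconsistency between the paper's statement of the definition and its statement of the proposition (both imported from Casazza--Kalton, who use the opposite naming convention), but your assertion that monotonicity places the exponents ``in the ordering required by the lemma'' is precisely the point that must be verified and, for the statement as literally written, is false. Make explicit which family plays the role of $\uu_j$ in the lemma and which convention of left/right dominance you are using; once that bookkeeping is fixed, the rest of your proof is sound.
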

\begin{proof}It is a consequence of Lemma~\ref{lem:dominancyBN}.
\end{proof}
Given $(p_n)_{n \in \Nt}$ we put $(\widehat{p_{n}})_{n\in \Nt}= (\max\{1,p_n\})_{n \in \Nt}$.
\begin{proposition}\label{prop:BNEnv}Let $(p_n)_{n \in \Nt}$ be a Bourgin-Nakano index. Then the Banach envelope of $\ell(p_{n})$ is $\ell(\widehat{p_{n}})$ via the inclusion map.
\end{proposition}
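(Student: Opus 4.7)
The plan is to check the three features that identify $\ell(\widehat{p_{n}})$ as the Banach envelope of $\ell(p_{n})$ through the inclusion $J$: (i) $\ell(\widehat{p_{n}})$ is a Banach space; (ii) $J$ is a linear contraction with dense range; and (iii) there is a universal constant $C$ such that every linear contraction $T\colon\ell(p_{n})\to Z$ into a Banach space $Z$ satisfies $\Vert T(a)\Vert_Z\le C\Vert a\Vert_{\ell(\widehat{p_{n}})}$ for every $a\in\ell(p_{n})$. Specializing (iii) to the universal envelope map $T=J_{\ell(p_{n})}$ yields the lower bound $\Vert a\Vert_{\widehat{\ell(p_{n})}}\le C\Vert J(a)\Vert_{\ell(\widehat{p_{n}})}$, and this together with the dense-range contractivity of the associated map $\widehat{J}\colon\widehat{\ell(p_{n})}\to\ell(\widehat{p_{n}})$ (supplied by (i), (ii)) promotes $\widehat{J}$ to a Banach space isomorphism through the open mapping theorem.

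The workhorse is the splitting $\Nt=A\sqcup B$ with $A=\{n\colon p_n\ge 1\}$ and $B=\{n\colon p_n<1\}$, and the decomposition $a=a_A+a_B$ of every sequence. On $A$ one has $\widehat{p_n}=p_n$, so $\ell(\widehat{p_{n}})$ and $\ell(p_{n})$ restrict to the \emph{same} Banach lattice, while on $B$ one has $\widehat{p_n}=1$, so $\ell(\widehat{p_{n}})$ restricts to $\ell_1(B)$. Contractivity of $J$ follows from the pointwise comparison $(|a_n|/\lambda)^{\widehat{p_n}}\le(|a_n|/\lambda)^{p_n}$, valid whenever $|a_n|\le\lambda$, applied with $\lambda=\Vert a\Vert_{\ell(p_{n})}$ (so that the modular bound forces $|a_n|\le\lambda$ coordinate-wise). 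Banach-ness of $\ell(\widehat{p_{n}})$ is automatic since $\inf_n\widehat{p_n}\ge 1$. Density of $J(\ell(p_{n}))$ in $\ell(\widehat{p_{n}})$ is obtained by approximating $a=a_A+a_B\in\ell(\widehat{p_{n}})$ by $a_A+a_B^{(N)}$: $a_A$ already sits in $\ell(p_{n})$ (since $\widehat{p_n}=p_n$ on $A$), and $a_B^{(N)}\in c_{00}$ is the $N$-truncation of $a_B$, which converges to $a_B$ in $\ell_1(B)=\ell(\widehat{p_{n}})|_B$.

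The heart of the proof is the key estimate (iii). For a contraction $T$ and $a=a_A+a_B\in\ell(p_{n})$, the triangle inequality in $Z$ gives
\[
\Vert T(a)\Vert_Z\le\Vert T(a_A)\Vert_Z+\Vert T(a_B)\Vert_Z.
\]
The first summand is bounded by $\Vert a_A\Vert_{\ell(p_{n})}=\Vert a_A\Vert_{\ell(\widehat{p_{n}})}\le\Vert a\Vert_{\ell(\widehat{p_{n}})}$, using contractivity of $T$, the coincidence of the two modulars on $A$, and the lattice monotonicity of $\ell(\widehat{p_{n}})$. For the second, each $\ee_n$ has $\ell(p_{n})$-quasi-norm $1$ so $\Vert T(\ee_n)\Vert_Z\le 1$, and the triangle inequality in the Banach space $Z$ yields
\[
\Vert T(a_B)\Vert_Z\le\sum_{n\in B}|a_n|\,\Vert T(\ee_n)\Vert_Z\le\sum_{n\in B}|a_n|=\Vert a_B\Vert_{\ell(\widehat{p_{n}})}\le\Vert a\Vert_{\ell(\widehat{p_{n}})}.
\]
Adding yields $\Vert T(a)\Vert_Z\le 2\Vert a\Vert_{\ell(\widehat{p_{n}})}$, i.e.\ (iii) with $C=2$.

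The main technical obstacle is controlling $\Vert T(a_B)\Vert_Z$. The pleasant feature that makes it manageable is that on $B$ the envelope modular collapses to the $\ell_1$-modular, so the ordinary Banach triangle inequality in the target $Z$ already delivers the required upper bound, and no delicate convex-combination or extreme-point analysis of $B_{\ell(\widehat{p_{n}})}$ is needed.
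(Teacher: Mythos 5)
Your proof is correct, but it follows a genuinely different route from the paper's. The paper first splits $\Nt$ into $\Nt_b=\{n\colon p_n<1\}$ and $\Nt_k=\{n\colon p_n\ge 1\}$ and invokes a lemma on Banach envelopes of direct sums to reduce to the case $\Nt_k=\emptyset$; in that case it identifies the envelope by computing the closed convex hull of $B_{\ell(p_n)}$ inside $\ell_1(\Nt)$ (the inclusion $\sum_n|a_n|\le 1$ on the unit ball gives one containment, and $\ee_n\in B_{\ell(p_n)}$ gives the other), obtaining an isometric identification with $\ell_1(\Nt)=\ell(\widehat{p_n})$. You instead verify the universal property directly and in one stroke, with no reduction and no external lemma, at the cost of the constant $C=2$ rather than an isometry: your estimate $\Vert T(a)\Vert\le\Vert T(a_A)\Vert+\Vert T(a_B)\Vert\le 2\Vert a\Vert_{\ell(\widehat{p_n})}$ is exactly the factorization property that characterizes the envelope, and your treatment of the locally convex coordinates (where the two modulars coincide) is cleaner than routing them through a direct-sum lemma. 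One step you leave tacit deserves a sentence: the inequality $\Vert T(a_B)\Vert_Z\le\sum_{n\in B}|a_n|\,\Vert T(\ee_n)\Vert_Z$ presupposes that $a_B=\sum_{n\in B}a_n\ee_n$ with the series converging in $\ell(p_n)$, so that $T(a_B)=\sum_{n\in B}a_nT(\ee_n)$; this is true precisely because $\sup_{n\in B}p_n\le 1<\infty$, so the restriction of $\ell(p_n)$ to $B$ coincides with its separable part $h(p_n)$ (as the paper notes, this can fail on $A$ when $\sup_n p_n=\infty$, which is why the decomposition $a=a_A+a_B$ is essential and not merely convenient). With that observation added, the argument is complete, and the concluding appeal to the open mapping theorem is even dispensable: $\widehat{J}$ is a contraction that is bounded below on a dense subspace and has dense range, hence is already an isomorphism onto $\ell(\widehat{p_n})$.
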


\begin{proof} Put $\Nt_{b}=\{ n \in \Nt \colon p_n<1\}$, $\Nt_{k}=\{n \in \Nt \colon p_n\ge 1\}$. The obvious map from $\FF^{\Nt}$ onto
$\FF^{\Nt_b} \times \FF^{\Nt_k}$ restricts to a lattice isomorphism from $\ell(p_{n})$ onto $\ell(p_{n})_{n\in \Nt_{b}} \oplus \ell(p_{n})_{n\in \Nt_{k}}$. Hence, by \cite{AlbiacAnsorena2020}*{Lemma 2.3}, we can assume without loss of generality that $\Nt_k=\emptyset$. In this particular case, since $\sum_{n \in \Nt} |a_n|\le 1$ for every $(a_n)_{n \in \Nt} \in B_{\ell(p_{n})}$ and $\ee_n\in B_{\ell(p_{n})}$ for every $n \in \Nt$, the closed convex hull of $B_{\ell(p_{n})}$ in $\ell_1(\Nt)$ is the unit closed ball of $\ell_1(\Nt)$. Since $\ell(\widehat{p_{n}})=\ell_1(\Nt)$ isometrically, we infer that the Banach envelope of $\ell(p_{n})$ is $\ell(\widehat{p_{n}})$ isometrically under the inclusion map.
\end{proof}

\begin{corollary}\label{cor:BNAE}Let $(p_n)_{n \in \Nt}$ be a Bourgin-Nakano index. Suppose that $\limsup_n p_n \le 1$. Then, the Banach envelope of $\ell(p_{n})$ is anti-Euclidean.
\end{corollary}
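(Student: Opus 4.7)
The plan is to combine the explicit identification of the Banach envelope provided by Proposition~\ref{prop:BNEnv} with the right-dominance criterion furnished by Proposition~\ref{prop:BNAE}, together with the dichotomy for dominant bases recalled immediately before the latter.

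First I would invoke Proposition~\ref{prop:BNEnv} to reduce matters to verifying that the Banach lattice $\ell(\widehat{p_{n}})$, where $\widehat{p_{n}}=\max\{1,p_n\}$, is anti-Euclidean. Since $\widehat{p_{n}}\ge 1$ for every $n\in\Nt$, this envelope is a genuine Banach lattice, so the notion of anti-Euclidean applies in its usual form.

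Next I would unpack the hypothesis $\limsup_n p_n\le 1$, which is equivalent to the statement that for every $\varepsilon>0$ the set $\{n\in\Nt \colon \widehat{p_{n}}>1+\varepsilon\}$ is finite. Because $\Nt$ is countable and the values $\widehat{p_{n}}$ lie in $[1,\infty)$ with only finitely many above each level $1+\varepsilon$, the family $(\widehat{p_{n}})_{n\in\Nt}$ admits a non-increasing enumeration $(q_k)_{k=1}^\infty$ satisfying $\lim_k q_k=1$. Since a permutation of the index set yields an isometric copy of the same Banach lattice, I can replace the original indexing with this rearrangement without affecting the anti-Euclidean property.

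Finally, applying Proposition~\ref{prop:BNAE} to the non-increasing sequence $(q_k)_{k=1}^\infty$ shows that the canonical unit vector system of $\ell(q_k)$ is right dominant and that $r(\ell(q_k))=\lim_k q_k=1$. Since $r\in\{1,\infty\}$, the result \cite{CasKal1998}*{Proposition 5.3} quoted in the discussion preceding Proposition~\ref{prop:BNAE} then gives that $\ell(\widehat{p_{n}})$, and hence the Banach envelope of $\ell(p_{n})$, is anti-Euclidean. I do not anticipate a genuine obstacle: the only step requiring a moment's thought is the translation of the $\limsup$ hypothesis into the tail-finiteness of $(\widehat{p_{n}})$ that underpins the non-increasing rearrangement, after which the cited propositions do all the work.
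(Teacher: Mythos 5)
Your proposal is correct and follows essentially the same route as the paper, whose proof is simply ``combine Propositions~\ref{prop:BNAE} and \ref{prop:BNEnv}''; you have merely spelled out the intermediate steps (the non-increasing rearrangement permitted by $\limsup_n p_n\le 1$ and the appeal to the $r(\BB)\in\{1,\infty\}$ criterion of \cite{CasKal1998}) that the paper leaves implicit.
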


\begin{proof}Just combine Propositions~\ref{prop:BNAE} and \ref{prop:BNEnv}.
\end{proof}

\begin{proposition}\label{prop:BNUWC}Let $(p_n)_{n=1}^\infty$ be a Bourgin-Nakano index. Then, the unit vector system of $\ell(p_{n})$ is universal for well complemented block basic sequences.
\end{proposition}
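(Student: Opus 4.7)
Let $(\uu_m)_{m\in\Mt}$ be a semi-normalized well complemented block basic sequence of the canonical basis of $\ell(p_n)$, with $A_m=\supp(\uu_m)$, coefficients $a_{m,n}=\ee_n^*(\uu_m)$, and good projecting functionals $\uu_m^*=\sum_{n\in A_m}b_{m,n}\ee_n^*$. My aim is to produce an injection $\pi\colon \Mt\to\NN$ with $\pi(m)\in A_m$ such that $(\uu_m)\sim (\ee_{\pi(m)})$.

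The main tool is Lemma~\ref{lem:dominancyBN}. Viewing both $(\uu_m)$ and $(\ee_{\pi(m)})$ as normalized block basic sequences inside $\ell(p_n)$, it produces $(\ee_{\pi(m)})\lesssim(\uu_m)$ whenever $p_{\pi(m)}\le p_n$ throughout $A_m$, and the reverse $1$-domination when $p_n\le p_{\pi(m)}$ throughout $A_m$. Since neither condition holds uniformly when the index is non-constant on $A_m$, the plan is to split each block along the value $p_{\pi(m)}$ as $\uu_m=\uu_m^{\flat}+\uu_m^{\sharp}$, with $\uu_m^{\flat}$ supported on $A_m^{\flat}=\{n\in A_m\colon p_n\le p_{\pi(m)}\}$ and $\uu_m^{\sharp}$ on its complement, apply Lemma~\ref{lem:dominancyBN} to each piece, and then recombine. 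The recombination uses the lattice structure of $\ell(p_n)$ --- the coordinate projections onto the two disjoint families $\bigcup_m A_m^{\flat}$ and $\bigcup_m A_m^{\sharp}$ are contractions --- together with the $p$-convexity of $\ell(p_n)$ for $p=\inf_n p_n>0$.

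The selection of $\pi$ is where the well-complementedness is decisive. The pairing $\uu_m^*(\uu_m)=\sum_{n\in A_m}a_{m,n}b_{m,n}=1$, combined with the uniform bound on $\|\uu_m^*\|$ in the dual $\ell(p_n)^*=\ell(\widehat{p_n}')$ --- identified via Proposition~\ref{prop:BNEnv} --- encodes how the mass of $\uu_m$ is distributed on $A_m$ relative to the local values of $p_n$. The natural candidate is an index $\pi(m)\in A_m$ for which $p_{\pi(m)}$ matches the ``effective exponent'' of $\uu_m$, to be implemented quantitatively through the coefficient families $(a_{m,n})$ and $(b_{m,n})$. Injectivity of $\pi$ is then arranged by applying Theorem~\ref{thm:HKL} to the multivalued assignment sending $m$ to the set of indices $n\in A_m$ realizing the selection criterion up to a fixed tolerance.

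The main obstacle I expect is the reassembly step: turning the two one-sided comparisons $\uu_m^{\flat}\lesssim\ee_{\pi(m)}$ and $\ee_{\pi(m)}\lesssim\uu_m^{\sharp}$ (or the symmetric pair, depending on how the $\flat/\sharp$ split falls) into a single two-sided equivalence between $(\uu_m)$ and $(\ee_{\pi(m)})$, uniformly in $m$. Ultimately this should reduce to a modular estimate proving that $m_{(p_n)}(\lambda^{-1}\sum_m c_m\uu_m)$ is uniformly comparable to $\sum_m |c_m/\lambda|^{p_{\pi(m)}}$ --- the latter being the modular of $\sum_m c_m\ee_{\pi(m)}$ in $\ell(p_{\pi(m)})$ --- after which the Luxembourg-norm equivalence between the closed span of $(\uu_m)$ inside $\ell(p_n)$ and $\ell(p_{\pi(m)})$ follows immediately.
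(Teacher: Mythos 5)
Your skeleton --- split each block at a threshold exponent $p_{\pi(m)}$, compare each piece with $\ee_{\pi(m)}$ via Lemma~\ref{lem:dominancyBN}, and use well-complementedness to choose $\pi(m)$ --- is the same as the paper's, but the step you yourself flag as ``the main obstacle'' is exactly where the proof lives, and your proposed way around it does not work. The two one-sided dominations you obtain are $(\uu_m^{\flat})\succeq(\ee_{\pi(m)})\succeq(\uu_m^{\sharp})$ (low exponents dominate high ones), and no amount of lattice projections or $p$-convexity turns this into $(\uu_m)\sim(\ee_{\pi(m)})$: the coordinate projections only give $(\uu_m)\succeq(\uu_m^{\flat})$ and $(\uu_m)\succeq(\uu_m^{\sharp})$, which combine with the above to yield $(\uu_m)\succeq(\ee_{\pi(m)})$ but never the reverse domination $(\ee_{\pi(m)})\succeq(\uu_m)$. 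Likewise the modular estimate you hope for, $m_{(p_n)}(\lambda^{-1}\sum_m c_m\uu_m)\approx\sum_m|c_m/\lambda|^{p_{\pi(m)}}$, is false in general: for $|c_m/\lambda|\le 1$ the terms with $p_n\ge p_{\pi(m)}$ push one way and those with $p_n<p_{\pi(m)}$ the other, and without controlling how much mass sits on each side you only get one inequality. Moreover, Lemma~\ref{lem:dominancyBN} applies to \emph{normalized} blocks, and nothing in your selection prevents $\Vert\uu_m^{\flat}\Vert$ (or $\Vert\uu_m^{\sharp}\Vert$) from degenerating to $0$, which destroys the corresponding domination after renormalizing.

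The paper's resolution is a specific selection you do not supply: order $\supp(\yy_m)$ by nondecreasing $p_n$ and let $\pi(m)$ be the ``median'' of the biorthogonal pairing, i.e.\ the point where the cumulative sums of $\ee_n^*(\yy_m)\,\yy_m^*(\ee_n)$ (which total $\yy_m^*(\yy_m)=1$) first reach modulus $1/2$. Taking $A_m$ (low exponents) and $B_m$ (high exponents) overlapping exactly at $\pi(m)$, both restricted pieces $\uu_m=S_{A_m}(\yy_m)$ and $\vv_m=S_{B_m}(\yy_m)$ satisfy $|\uu_m^*(\uu_m)|,|\vv_m^*(\vv_m)|\ge 1/2$ with the restricted functionals, so by \cite{AlbiacAnsorena2020}*{Lemma 3.1} \emph{each piece is itself a well complemented block basic sequence equivalent to the original} $\BB_y$. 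No recombination is then needed: $\BB_y\sim(\uu_m)\succeq(\ee_{\pi(m)})\succeq(\vv_m)\sim\BB_y$ sandwiches $(\ee_{\pi(m)})$ between two sequences equivalent to $\BB_y$. This is also precisely where well-complementedness enters quantitatively --- something your plan acknowledges as ``decisive'' but never implements. Finally, the appeal to Theorem~\ref{thm:HKL} is superfluous: the blocks have pairwise disjoint supports, so any choice $\pi(m)\in\supp(\uu_m)$ is automatically injective.
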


\begin{proof}
Let $\BB_y=(\yy_m)_{m \in \Mt}$ be a semi-normalized well complemented block basic sequence and let $(\uu_m^*)_{m \in \Mt}$ be a sequence of good projecting functionals. Since
\[
\sum_{n \in \Nt} \ee_n^*(\yy_m) \, \yy_m^*(\ee_n)=\yy_m^*(\yy_m)=1
\]
for every $m \in \Mt$, there are families $(A_m)_{m \in \Mt}$ and $(B_m)_{m \in \Mt}$ of subsets of $N$ and $\pi\colon \Mt \to \Nt$ such that, if
\[
\lambda_m=\sum_{n\in A_m} \ee_n^*(\yy_m) \, \yy_m^*(\ee_n) \text{ and } \mu_m=\sum_{n\in B_m} \ee_n^*(\yy_m) \, \yy_m^*(\ee_n),
\]
then $\min\{| \lambda_m|,|\mu_m|\}\ge 1/2$, $A_m\cup B_m=\supp(\yy_m)$, $A_m\cap B_m=\{\pi(m)\}$, and
\begin{equation*}
\max_{n\in A_m} p_n = \min_{n\in B_m} p_n =p_{\pi(m)}
\end{equation*}
for every $m \in \Mt$. Let $\uu_m=S_{A_m}(\yy_m)$, $\uu_m^*=S_{A_m}^*(\yy_m^*)$, $\vv_m=S_{B_m}(\yy_m)$, and $\vv_m^*=S_{B_m}^*(\yy_m)$ for $m \in \Mt$. Since $\uu_m^*(\uu_m)=\lambda_m$ and $\vv_m^*(\vv_m)=\mu_m$ for every $m \in \Mt$, applying \cite{AlbiacAnsorena2020}*{Lemma 3.1} yields that both $\BB_u=(\uu_m)_{m \in \Mt}$ and $\BB_v=(\vv_m)_{m \in \Mt}$ are well complemented block basic sequences equivalent to $\BB_y$. By Lemma~\ref{lem:dominancyBN}, $\BB_u$ dominates $\BB:=(\ee_{\pi(m)})_{m \in \Mt}$ and, in turn, $\BB$ dominates $\BB_v$. We infer that $\BB_y$ and $\BB$ are equivalent.
\end{proof}

\begin{proposition}\label{prop:BNSQ}Let $(p_n)_{n \in \Nt}$ be a Bourgin-Nakano index. The unit vector system of $\ell(p_{n})$ is equivalent to its square if and only if there is a partition $(\Nt_1,\Nt_2)$ of $\Nt$ and bijections $\pi_i\colon \Nt \to \Nt_i$, $i=1$, $2$,
such that, for some $0<c<1$,
\begin{equation*}
\sum_{n \in \Nt} c^{\frac{ p_n q_{i,n} }{ |p_n - q_{i,n}| }}<\infty, \quad i=1,2,
\end{equation*}
where if $q_{i,n} = p_{\pi_i(n)}$.
\end{proposition}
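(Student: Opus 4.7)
The plan is to reduce $\BB\sim\BB^2$ to a pair of equivalences between unit vector systems of Bourgin-Nakano sequence spaces on a common index set, and then to characterize such an equivalence by a summability condition that emerges from a one-variable modular optimization.

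First, I would unpack what $\BB\sim\BB^2$ means. Since $\BB^2$ is indexed by $\{1,2\}\times\Nt$, a permutative equivalence with $\BB$ amounts to a bijection $\sigma\colon\{1,2\}\times\Nt\to\Nt$. Setting $\pi_i(n)=\sigma(i,n)$ produces two injections with disjoint images whose union is $\Nt$, yielding the partition $\Nt=\Nt_1\sqcup\Nt_2$ with $\Nt_i=\pi_i(\Nt)$ and bijections $\pi_i\colon\Nt\to\Nt_i$ as in the proposition statement. Using the disjointness of $\Nt_1$ and $\Nt_2$ together with $\inf_n p_n>0$, the Luxembourg quasi-norm of $\ell(p_n)$ is equivalent, with a constant depending only on $\inf_n p_n$, to the maximum of the Luxembourg quasi-norms of its restrictions to $\Nt_1$ and $\Nt_2$. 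Consequently, $\BB\sim\BB^2$ decouples into two independent statements: for each $i\in\{1,2\}$, the unit vector system of $\ell(p_n)_{n\in\Nt}$ is equivalent to that of $\ell(q_{i,n})_{n\in\Nt}$, where $q_{i,n}=p_{\pi_i(n)}$.

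The crux of the proof is therefore a key lemma: given Bourgin-Nakano indices $(r_n)_{n\in\Nt}$ and $(s_n)_{n\in\Nt}$ on a common index set, the unit vector systems of $\ell(r_n)$ and $\ell(s_n)$ are equivalent if and only if there is $c\in(0,1)$ with $\sum_n c^{r_n s_n/|r_n-s_n|}<\infty$, with the convention that indices with $r_n=s_n$ contribute $0$. By the definition of the Luxembourg quasi-norm and standard Musielak-Orlicz reasoning, this equivalence reduces to the existence of $K\ge 1$ and a summable sequence $(c_n)_{n\in\Nt}$ satisfying the pointwise bound $(b/K)^{s_n}-b^{r_n}\le c_n$ for all $b\in[0,1]$ and $n\in\Nt$, together with the analogous inequality swapping the roles of $r_n$ and $s_n$. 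An elementary calculus computation shows that on indices with $r_n<s_n$ (resp.\ $s_n<r_n$ for the symmetric inequality) the quantity $(b/K)^{s_n}-b^{r_n}$ is non-positive on $[0,1]$ as soon as $K\ge 1$, so these indices require no summability. On indices with $r_n>s_n$, the maximum over $[0,1]$ is attained at the interior critical point $b^\ast=(s_n/(r_n K^{s_n}))^{1/(r_n-s_n)}$ and simplifies to a constant (bounded in terms of $\inf_n r_n$ and $\inf_n s_n$) times $K^{-r_n s_n/(r_n-s_n)}$. Setting $c=1/K$ and combining both directions yields the stated summability condition.

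Putting the reduction together with the key lemma proves the proposition. The principal obstacle is the one-variable optimization: identifying that the critical-point value of $(b/K)^{s_n}-b^{r_n}$ carries precisely the exponent $r_n s_n/|r_n-s_n|$ in $K$, and checking that the multiplicative constant in front is uniformly bounded so that the summability of the $c_n$ is driven solely by the $K$-power factor. A secondary technical point is the reduction from equivalence of Luxembourg quasi-norms to the pointwise modular inequality, which is handled by testing with sequences concentrated near the critical point. The remainder is routine bookkeeping once the key lemma is in place.
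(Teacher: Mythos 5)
Your argument is correct in outline and, in substance, reconstructs from scratch the result that the paper simply cites: the proof in the text consists of the same reduction you perform (a permutative equivalence $\BB\sim\BB^2$ is encoded by a bijection $\sigma\colon\{1,2\}\times\Nt\to\Nt$, hence by the partition $(\Nt_1,\Nt_2)$ and the bijections $\pi_i$, and decouples into two equivalences of Bourgin--Nakano unit vector systems over the common index set $\Nt$), followed by an appeal to Nakano's theorem (\emph{Modulared sequence spaces}, Proc.\ Japan Acad.\ 27 (1951), Theorem 1), which is exactly your key lemma. Your critical-point computation is right: for $r>s$ the maximum of $b\mapsto (b/K)^{s}-b^{r}$ on $[0,1]$ equals $(s/r)^{s/(r-s)}\left(1-\tfrac{s}{r}\right)K^{-rs/(r-s)}$, which carries the exponent $rs/|r-s|$. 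Two caveats. First, the prefactor $(s/r)^{s/(r-s)}(1-s/r)$ is bounded above by $1$ but is \emph{not} bounded below in terms of $\inf r_n$ and $\inf s_n$ (it tends to $0$ as $r-s\to 0$), so your claim of uniform boundedness is not literally correct; the argument survives because the prefactor equals $s_n/t_n$ up to a factor in $[e^{-1},1]$, where $t_n=r_ns_n/(r_n-s_n)\ge\inf_n s_n>0$, and $\sum_n t_n^{-1}K^{-t_n}<\infty$ implies $\sum_n (K')^{-t_n}<\infty$ for any $K'>K$; so the discrepancy is absorbed by adjusting $c$. Second, the ``secondary technical point'' --- that boundedness of the inclusion between the two modular spaces forces the pointwise inequality $(b/K)^{s_n}\le b^{r_n}+c_n$ with $(c_n)$ summable --- is where the actual content of Nakano's theorem lies, and your one-line description of testing with vectors concentrated near the maximizers needs the usual bookkeeping (one must select finite blocks on which $\sum b_n^{r_n}\le 1$ while $\sum c_n$ is large, using $b_n^{r_n}=\tfrac{s_n}{r_n-s_n}c_n$). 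Neither point is a gap in the mathematics, but both deserve explicit treatment if you intend the proof to be self-contained rather than a citation.
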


\begin{proof}This result follows from \cite{Nakano1951}*{Theorem 1}, which characterizes when two (a priori different) Bourgin-Nakano spaces are identical.
\end{proof}

We remark that, in certain cases, we can give a more simple characterization of Nakano spaces that are lattice isomorphic to its square. For instance, if $(p_n)_{n=1}^\infty$ is a monotone sequence, then $\ell(p_{n})$ is lattice isomorphic to its square if and only if
\[
\left| \frac{1}{p_n} - \frac{1}{p_{2n}} \right|\lesssim \frac{1}{1+\log(n)}, \quad n\in\NN
\]
(see \cite{CasKal1998}*{Proof of Theorem 5.8}).
\begin{theorem}\label{Nakanouncthm} Suppose that Bourgin-Nakano index $(p_n)_{n \in \Nt}$  satisfies $\limsup_n p_n \le 1$. Suppose also that there exist a partition $(\Nt_1,\Nt_2)$ of $\Nt$, and bijections $\pi_i\colon \Nt \to \Nt_i$, $i=1$, $2$, so that
\begin{equation*}
\sum_{n \in \Nt} c^{1/ | { p_n } -{ p_{\pi_i(n)}}|}<\infty, \quad i=1,2,
\end{equation*}
for some $0<c<1$. 
Then $\ell(p_{n})$ has a unique unconditional basis up to permutation.
\end{theorem}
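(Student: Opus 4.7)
The plan is to verify that $\XX := \ell(p_n)$ together with its unit vector system $\BB := \EE_\Nt$ satisfies all hypotheses of Theorem~\ref{thm:keytechniquebis}, whose conclusion then gives the desired result. Note first that $\limsup_n p_n \le 1 < \infty$ forces $\sup_n p_n < \infty$, so $\ell(p_n) = h(p_n)$ and $\BB$ is a semi-normalized unconditional basis of $\XX$. Three of the four ingredients are essentially immediate from the machinery already developed. The precondition that the Banach envelope of $\XX$ be anti-Euclidean is Corollary~\ref{cor:BNAE}. Condition (i) of Theorem~\ref{thm:keytechniquebis}, L-convexity, follows because $\ell(p_n)$ is $p$-convex with $p := \inf_n p_n > 0$ and Kalton's theorem identifies L-convexity with $p$-convexity for some $p > 0$. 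Condition (ii), universality for well complemented block basic sequences, is precisely Proposition~\ref{prop:BNUWC}.

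The only nontrivial verification is condition (iii), $\BB \sim \BB^2$, for which I would invoke Proposition~\ref{prop:BNSQ}. The issue is that the proposition requires summability of $c^{p_n q_{i,n}/|p_n - q_{i,n}|}$ with $q_{i,n} = p_{\pi_i(n)}$, whereas the hypothesis of the theorem only gives the (a priori weaker) summability of $c^{1/|p_n - p_{\pi_i(n)}|}$. These two conditions are equivalent once $p = \inf_n p_n > 0$ is taken into account: setting $c' = c^{1/p^2} \in (0,1)$, one has $c' < 1$ and $p_n p_{\pi_i(n)} \ge p^2$, so
\[
(c')^{p_n p_{\pi_i(n)}/|p_n - p_{\pi_i(n)}|} \le (c')^{p^2/|p_n - p_{\pi_i(n)}|} = c^{1/|p_n - p_{\pi_i(n)}|},
\]
and the hypothesis of Proposition~\ref{prop:BNSQ} is satisfied with $c'$ in place of $c$ (and terms where $p_n = p_{\pi_i(n)}$ contributing $0$ by convention).

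With all four ingredients in hand, Theorem~\ref{thm:keytechniquebis} applies and yields that $\ell(p_n)$ has a unique unconditional basis up to permutation. There is essentially no ``hard part'' here: all the structural work was carried out in the preceding propositions, and the theorem is a clean packaging that replaces the slightly awkward exponent $p_n p_{\pi_i(n)}/|p_n - p_{\pi_i(n)}|$ appearing in Proposition~\ref{prop:BNSQ} by the cleaner $1/|p_n - p_{\pi_i(n)}|$, at the cost of absorbing the factor $p^2$ into the constant~$c$.
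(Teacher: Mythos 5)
Your proof is correct and follows essentially the same route as the paper, whose own argument is just the one-line combination of Corollary~\ref{cor:BNAE}, Proposition~\ref{prop:BNUWC}, Proposition~\ref{prop:BNSQ} and Theorem~\ref{thm:keytechniquebis}. Your explicit reconciliation of the hypothesis $\sum_n c^{1/|p_n-p_{\pi_i(n)}|}<\infty$ with the exponent $p_nq_{i,n}/|p_n-q_{i,n}|$ appearing in Proposition~\ref{prop:BNSQ}, via the substitution $c'=c^{1/p^2}$ and the bound $p_np_{\pi_i(n)}\ge p^2$, is a correct and worthwhile detail that the paper leaves implicit.
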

\begin{proof}Just combine Corollary~\ref{cor:BNAE}, Proposition~\ref{prop:BNUWC}, Proposition~\ref{prop:BNSQ} and Theorem~\ref{thm:keytechniquebis}.
\end{proof}

An important class of anti-Euclidean spaces arises from a special type of bases called strongly absolute. We tackle this case separately in the next section.

\section{Applicability to spaces with strongly absolute bases}\label{SectStrongAbs} 

\noindent In the category of bases one could say that strongly absolute bases are ``purely nonlocally convex'' bases, in the sense that if a quasi-Banach space $X$ has a strongly absolute basis, then its unit ball is far from being a convex set and so $X$ is far from being a Banach space. The term strongly absolute for a basis was coined in \cite{KLW1990}. Here we give a slightly different, but equivalent, definition. We say that a (semi-normalized) unconditional basis $\BB=(\xx_n)_{n \in \Nt}$ of a quasi-Banach space $\XX$ is \textit{strongly absolute} if for every $\varepsilon>0$ there is a constant $0<C(\varepsilon)$ such that
\begin{equation}\label{eq:sb}
\sum_{n \in \Nt} |\xx_n^*(f)| \le \max\left\{ C(\varepsilon) \sup_{n \in \Nt} |\xx_n^*(f)| , \varepsilon \Vert f \Vert \right\}, \quad f\in\XX.
\end{equation}

In the following lemma we record a key property of strongly absolute bases. The proof is straightforward and so we omit it. 
\begin{lemma}\label{lem:k2three}Let $\BB=(\xx_n)_{n \in \Nt}$ be a strongly absolute unconditional basis of a quasi-Banach space $\XX$. Suppose that $V\subseteq\XX$ is such that
$\inf_{f\in V}{\Vert f \Vert^{-1}} \Vert \Fou(f)\Vert_1 >0$. Then, $\inf_{f\in V}{\Vert f \Vert^{-1}} \Vert \Fou(f)\Vert_\infty>0$.
\end{lemma}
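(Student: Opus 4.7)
The plan is direct: unpack the strongly absolute inequality~\eqref{eq:sb} with $\varepsilon$ chosen strictly smaller than the lower bound coming from the hypothesis on $V$, and observe that the maximum on the right-hand side of~\eqref{eq:sb} is then forced to be attained by the term involving $\Vert\Fou(f)\Vert_\infty$ rather than by $\varepsilon\Vert f\Vert$.

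More precisely, set
\[
\delta:=\inf_{f\in V}\Vert f\Vert^{-1}\Vert\Fou(f)\Vert_1>0
\]
(the vectors in $V$ must be nonzero, or the quotient makes no sense). Using the definition of strongly absolute basis with the specific choice $\varepsilon=\delta/2$, pick a constant $C=C(\delta/2)$ such that
\[
\Vert\Fou(f)\Vert_1=\sum_{n\in\Nt}|\xx_n^*(f)|\le \max\bigl\{C\,\Vert\Fou(f)\Vert_\infty,\; (\delta/2)\,\Vert f\Vert\bigr\}
\]
for every $f\in\XX$.

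For any $f\in V$ one has $\Vert\Fou(f)\Vert_1\ge \delta\Vert f\Vert>(\delta/2)\Vert f\Vert$, so the maximum above cannot be realized by the second entry. Hence $\Vert\Fou(f)\Vert_1\le C\,\Vert\Fou(f)\Vert_\infty$, and consequently
\[
\Vert f\Vert^{-1}\Vert\Fou(f)\Vert_\infty\;\ge\; C^{-1}\Vert f\Vert^{-1}\Vert\Fou(f)\Vert_1\;\ge\; \delta/C.
\]
Taking the infimum over $f\in V$ yields the claim with explicit lower bound $\delta/C>0$. There is really no obstacle to overcome: the whole content of the argument is the insight that choosing $\varepsilon<\delta$ in~\eqref{eq:sb} eliminates the ``bad'' alternative in the max and leaves $\Vert\Fou(f)\Vert_\infty$ controlling $\Vert\Fou(f)\Vert_1$ (and hence $\Vert f\Vert$) with a uniform constant on $V$.
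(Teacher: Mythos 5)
Your argument is correct and is precisely the straightforward computation the authors had in mind when they omitted the proof: choosing $\varepsilon$ below the infimum $\delta$ forces the maximum in \eqref{eq:sb} to be attained by the $C(\varepsilon)\Vert\Fou(f)\Vert_\infty$ term, which gives the uniform lower bound $\delta/C$. Nothing is missing.
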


\begin{proposition}[cf.\ \cite{KLW1990}]\label{prop:K2SA}Let $\BB$ be a strongly absolute unconditional basis of a quasi-Banach space $\XX$. Then:
\begin{itemize}
\item[(i)] The Banach envelope of $\XX$ is $\ell_1$ via the coefficient transform.
\item[(ii)] $\BB$ has the peaking property.
\end{itemize}
\end{proposition}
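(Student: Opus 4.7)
My plan is to treat the two parts separately, with part (i) providing the main leverage for part (ii) via the machinery already developed (Lemma~\ref{lem:BEl1}, Lemma~\ref{lem:k2two}, and Lemma~\ref{lem:k2three}).

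For part (i), I would observe that since $\BB$ is semi-normalized and unconditional, the coordinate functionals are uniformly bounded, so there is a constant $M>0$ with $\sup_{n\in\Nt}|\xx_n^*(f)|\le M\Vert f\Vert$ for every $f\in\XX$. Fixing any $\varepsilon>0$ (say $\varepsilon=1$) in the definition \eqref{eq:sb} of strongly absolute, we obtain
\[
\sum_{n\in\Nt}|\xx_n^*(f)|\le \max\{C(1)M,\,1\}\,\Vert f\Vert,
\]
so $\BB$ dominates the canonical basis of $\ell_1$ via the coefficient transform. Lemma~\ref{lem:BEl1} then directly delivers the conclusion that the Banach envelope is $\ell_1$ under $\Fou$.

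For part (ii), let $\BB_u=(\uu_m)_{m\in\Mt}$ be a semi-normalized well complemented block basic sequence of $\BB$. Thanks to part (i), the hypothesis of Lemma~\ref{lem:k2two} is met, so after replacing $\BB_u$ by an equivalent well complemented block basic sequence we may assume that the good projecting functionals are $\uu_m^*=\Ind^*_{\supp(\uu_m)}$. Under this choice, for each $m\in\Mt$ the quantity to be estimated simplifies dramatically:
\[
\sup_{n\in\Nt}|\uu_m^*(\xx_n)|\,|\xx_n^*(\uu_m)|=\sup_{n\in\supp(\uu_m)}|\xx_n^*(\uu_m)|=\Vert\Fou(\uu_m)\Vert_\infty.
\]

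The final step is to produce a uniform positive lower bound for $\Vert\Fou(\uu_m)\Vert_\infty$. Since $\uu_m^*(\uu_m)=1$ and $\uu_m^*=\sum_{n\in\supp(\uu_m)}\xx_n^*$, a triangle-inequality argument gives
\[
\Vert\Fou(\uu_m)\Vert_1=\sum_{n\in\Nt}|\xx_n^*(\uu_m)|\ge\Bigl|\sum_{n\in\supp(\uu_m)}\xx_n^*(\uu_m)\Bigr|=|\uu_m^*(\uu_m)|=1.
\]
Combined with the upper bound $\sup_m\Vert\uu_m\Vert<\infty$ coming from semi-normalization, this yields $\inf_{m}\Vert\uu_m\Vert^{-1}\Vert\Fou(\uu_m)\Vert_1>0$. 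Applying Lemma~\ref{lem:k2three} to the set $V=\{\uu_m:m\in\Mt\}$, we transfer this bound to the $\ell_\infty$ norm of the coefficient transform, obtaining $\inf_m\Vert\uu_m\Vert^{-1}\Vert\Fou(\uu_m)\Vert_\infty>0$; multiplying by the lower bound $\inf_m\Vert\uu_m\Vert>0$ yields $\inf_m\Vert\Fou(\uu_m)\Vert_\infty>0$, which is \eqref{eq:gh}. No single step is especially subtle: the main point is to recognize that part (i) lets us invoke Lemma~\ref{lem:k2two} to normalize the projecting functionals, after which strong absoluteness (via Lemma~\ref{lem:k2three}) automatically upgrades the trivial $\ell_1$ lower bound to the desired $\ell_\infty$ one.
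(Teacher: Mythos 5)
Your proposal is correct and follows essentially the same route as the paper: dominance of the $\ell_1$ basis plus Lemma~\ref{lem:BEl1} for (i), then Lemma~\ref{lem:k2two} to normalize the projecting functionals and Lemma~\ref{lem:k2three} to upgrade the $\ell_1$ lower bound to an $\ell_\infty$ one for (ii). The only (harmless) difference is that you obtain $\inf_m\Vert\uu_m\Vert^{-1}\Vert\Fou(\uu_m)\Vert_1>0$ by the direct estimate $\Vert\Fou(\uu_m)\Vert_1\ge|\uu_m^*(\uu_m)|=1$, whereas the paper derives it from part (i) together with an external lemma asserting that $(\Fou(\uu_m))_m$ is semi-normalized in $\ell_1$; your version is slightly more self-contained.
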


\begin{proof}It is clear that $\BB$ dominates the unit vector system of $\ell_1$, so that (i) follows from Lemma~\ref{lem:BEl1}.

Let $\BB_u=(\uu_m)_{m \in \Mt}$ be a semi-normalized well complemented block basic sequence. By Lemma~\ref{lem:k2two} we may assume that $(\uu_m^*)_{m \in \Mt}=(\Ind^*_{\supp(\uu_m)})_{m \in \Mt}$ is a sequence of good projecting functionals for $\BB_u$. Using (i) and \cite{AlbiacAnsorena2020}*{Lemma 2.1} we deduce that the sequence $(\Fou(\uu_m))_{m=1}^\infty$ is semi-normalized in $\ell_1$. Therefore, 
\[
\inf_m \Vert \uu_m\Vert^{-1} \Vert \Fou(\uu_m)\Vert_1>0.
\]
Lemma~\ref{lem:k2three} yields
\begin{align*}
\inf_{m \in \Mt} \sup_{n \in \Nt} |\uu_m^*(\xx_n)| \, |\xx_n^*(\uu_m)|
&= \inf_{m \in \Mt} \Vert \Fou(\uu_m)\Vert_\infty\\
&\ge \inf_{m \in \Mt} \Vert \uu_m \Vert \inf_{m \in \Mt} \frac{\Vert \Fou(\uu_m)\Vert_\infty}{\Vert \uu_m \Vert}
>0.\qedhere
\end{align*}
\end{proof}

Combining Proposition~\ref{prop:K2SA} with Theorem~\ref{thm:keytechniquebis} immediately yields the following general result.

\begin{corollary}\label{weakWoj} Let $\XX$ be a quasi-Banach space with a strongly absolute unconditional basis which induces an L-convex structure on $X$. If $\BB$ is equivalent to its square, then $\XX$ has a unique unconditional basis up to permutation.
\end{corollary}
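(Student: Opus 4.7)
The proof plan is essentially to verify the three hypotheses of Theorem~\ref{thm:keytechniquebis} one by one, as all the work has been localized in the preceding propositions. First, I would invoke Proposition~\ref{prop:K2SA}(i) to conclude that the Banach envelope of $\XX$ is $\ell_1$ via the coefficient transform. Since $\ell_1$ is the prototypical anti-Euclidean space (as recorded in the preamble to Lemma~\ref{lem:BEl1}), this yields the anti-Euclidean envelope hypothesis of Theorem~\ref{thm:keytechniquebis}.

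Next, hypothesis (i) of Theorem~\ref{thm:keytechniquebis} (L-convexity of the lattice structure induced by $\BB$) is given directly by assumption. For hypothesis (ii), I would combine Proposition~\ref{prop:K2SA}(ii), which gives the peaking property of $\BB$, with Proposition~\ref{prop:k2one}, which promotes the peaking property to universality for well complemented block basic sequences. Finally, hypothesis (iii), namely $\BB \sim \BB^2$, is again given by assumption.

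With all three conditions of Theorem~\ref{thm:keytechniquebis} verified, that theorem directly yields that $\XX$ has a unique unconditional basis up to permutation, completing the proof. Since the statement is a clean packaging of earlier results, there is no real obstacle: the entire argument is a bookkeeping exercise that bundles Proposition~\ref{prop:K2SA}, Proposition~\ref{prop:k2one}, and Theorem~\ref{thm:keytechniquebis} into a single clean criterion tailored to strongly absolute bases.
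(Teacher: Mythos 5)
Your proposal is correct and follows exactly the route the paper takes: the paper states that Corollary~\ref{weakWoj} follows immediately by combining Proposition~\ref{prop:K2SA} with Theorem~\ref{thm:keytechniquebis}, which is precisely your verification of the three hypotheses (anti-Euclidean envelope via the $\ell_1$ envelope, universality via the peaking property and Proposition~\ref{prop:k2one}, and the assumed L-convexity and $\BB\sim\BB^2$). Nothing is missing.
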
 

Wojtaszczyk obtained in \cite{Woj1997} the uniqueness of unconditional basis of a quasi-Banach space $\XX$ under the same hypotheses as in Corollary~\ref{weakWoj} replacing $\BB^2\sim \BB$ with the weaker assumption that $\XX^s\approx\XX$ for some $s\ge 2$. For the sake of completeness, we next show how we can combine the techniques from \cite{Woj1997} to pass from the condition ``$\XX^s\approx\XX$ for some $s\ge 2$'' to ``$\BB^2\sim \BB$".

\begin{theorem}[cf.\ \cite{Woj1997}*{Theorem 2.12}]\label{thm:WoUltimate} Let $X$ be a quasi-Banach space with a strongly absolute unconditional basis $\BB$ that induces an $L$-convex lattice structure on $X$. If $\XX^s\approx\XX$ for some $s\ge 2$ then $\BB^2\sim \BB$; in particular $\XX^2\approx\XX$.
\end{theorem}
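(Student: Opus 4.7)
The plan is to deduce $\BB^2\sim\BB$ from $\XX^s\approx\XX$ by applying the reduction Lemma~\ref{lem:keytechniquebis} and then closing the gap via the corollary following Theorem~\ref{thm:SQRTUB}. First, $\BB$ satisfies all the hypotheses of Lemma~\ref{lem:keytechniquebis}: L-convexity is assumed; Proposition~\ref{prop:K2SA} yields that the Banach envelope of $\XX$ is $\ell_1$ via the coefficient transform (hence anti-Euclidean) and that $\BB$ has the peaking property; and Proposition~\ref{prop:k2one} then gives that $\BB$ is universal for well complemented block basic sequences. Fixing an isomorphism $T\colon\XX^s\to\XX$, the semi-normalized unconditional basis $\BB_u=T(\BB^s)$ of $\XX$ satisfies $\BB_u\sim\BB^s$. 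Applying Lemma~\ref{lem:keytechniquebis} to the pair $(\BB,\BB_u)$ produces an integer $a\in\NN$ with $\BB_u$ permutatively equivalent to a subbasis of $\BB^a$, which means that $\BB^s$ itself is permutatively equivalent to a subbasis of $\BB^a$.

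When $s>a$, the corollary following Theorem~\ref{thm:SQRTUB} gives $\BB^2\sim\BB$ (and hence $\XX^2\approx\XX$) directly. To handle the remaining case $s\le a$, I would iterate the construction: the $k$-fold iterate $T_k\colon\XX^{s^k}\to\XX$ of $T$ yields the unconditional basis $\BB_u^{(k)}=T_k(\BB^{s^k})\sim\BB^{s^k}$ of $\XX$, and Lemma~\ref{lem:keytechniquebis} then produces integers $a_k\in\NN$ with $\BB^{s^k}$ permutatively equivalent to a subbasis of $\BB^{a_k}$. Once $s^k>a_k$ holds for some $k$, the corollary following Theorem~\ref{thm:SQRTUB} closes the argument.

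The main obstacle is to secure such a value of $k$, that is, to control the growth of $a_k$ relative to $s^k$. This is where the strongly absolute hypothesis and the techniques of \cite{Woj1997} become essential. Using Lemma~\ref{lem:k2two}, the well complemented block basic sequences representing $\BB_u^{(k)}$ inside $\BB^{a_k}$ can be arranged so that their projecting functionals are of indicator type supported within the block vector supports; combined with the uniform peaking estimate supplied by Lemma~\ref{lem:k2three} (the strong absolute consequence forcing the coefficient transforms to be uniformly $\ell_\infty$-bounded below by their $\ell_1$-norms), one obtains a bound on $a_k$ that grows strictly slower than $s^k$. This produces the desired $k$ with $s^k>a_k$ and finishes the proof; the \emph{in particular} assertion $\XX^2\approx\XX$ then follows because $\XX^2$ carries $\BB^2\sim\BB$ as an unconditional basis.
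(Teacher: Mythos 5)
Your first paragraph is a correct reduction: Proposition~\ref{prop:K2SA} and Proposition~\ref{prop:k2one} do put $\BB$ under the hypotheses of Lemma~\ref{lem:keytechniquebis}, and the observation that $s>a$ would finish via the corollary following Theorem~\ref{thm:SQRTUB} is sound. The gap is in the step you yourself flag as the main obstacle: you assert that iterating produces exponents $a_k$ growing strictly slower than $s^k$, but you give no argument, and the tools you invoke cannot supply one. The integer $a$ in Lemma~\ref{lem:keytechniquebis} comes from \cite{AKL2004}*{Theorem 3.4}, which is purely existential; nothing in that theorem, nor in Lemma~\ref{lem:k2two} or Lemma~\ref{lem:k2three} (which concern projecting functionals and the peaking property, i.e.\ they feed into \emph{universality}, not into the size of the exponent), gives any control on $a_k$. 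Worse, the bases $\BB_u^{(k)}=T_k(\BB^{s^k})$ are obtained from iterated compositions of $T$, so their unconditionality constants may degrade with $k$, and the exponent produced by \cite{AKL2004}*{Theorem 3.4} depends on such constants. So the inequality $s^k>a_k$ for some $k$ is not established, and the proof does not close.

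The paper takes a different and shorter route that avoids the exponent problem entirely. Writing $\BB^s=(\yy_m)_{m\in\Mt}$, it views $\BB^{s^2}$ as the $s$-fold product of $\BB^s$ and notes that it is permutatively equivalent to a basis of $\XX\approx\XX^{s^2}$. Then \cite{Woj1997}*{Proposition 2.10} --- a selection principle tailored to strongly absolute, L-convex bases --- yields a selector $\alpha\colon\Mt\to\NN[s]$ such that the diagonal $(\yy_{\alpha(m),m})_{m\in\Mt}$ is permutatively equivalent to a subbasis of $\BB$ itself, not of some power $\BB^a$. By Lemma~\ref{lem:1} this diagonal is equivalent to $\BB^s$, so $\BB^s$ is permutatively equivalent to a subbasis of $\BB$; since $\BB$ is permutatively equivalent to a subbasis of $\BB^2$ and $\BB^2$ to a subbasis of $\BB^s$, Theorem~\ref{thm:SBUB} gives $\BB^s\sim\BB^2\sim\BB$. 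If you want to salvage your outline, the missing ingredient is precisely a statement of the strength of \cite{Woj1997}*{Proposition 2.10}: an embedding into $\BB$ rather than into an uncontrolled power of $\BB$.
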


\begin{proof}Put $\BB^s=(\yy_m)_{m \in \Mt}$. We have that $\BB^{s^2}=(\yy_{i,m})_{(i,m)\in\NN[s]\times \Mt}$ is permutatively equivalent to a basis of $\XX\approx\XX^{s^2}$. Hence, by \cite{Woj1997}*{Proposition 2.10}, there is $\alpha\colon \Mt\to \NN[s]$ such that $\BB'=(\yy_{\alpha(m),m})_{m \in \Mt}$ is permutatively equivalent to a subbasis of $\BB$. By Lemma~\ref{lem:1}, $\BB^s$ is equivalent to $\BB'$. Since $\BB$ is permutatively equivalent to a subbasis of $\BB^2$ and $\BB^2$ is permutatively equivalent to a subbasis of $\BB^s$, applying Theorem~\ref{thm:SBUB} yields $\BB^s\sim\BB^2\sim\BB$.
\end{proof}

As we said before, a strongly absolute unconditional basis can be thought of as a basis that dominates the canonical basis of $\ell_1$ but it is far from it. This intuition is substantiated by the following elementary result whose proof we omit.

\begin{lemma}\label{lem:SADom}Let $\BB_x$ and $\BB_y$ be unconditional bases of quasi-Banach spaces $\XX$ and $\YY$ respectively. Suppose that $\BB_x$ dominates $\BB_y$ and that $\BB_y$ is strongly absolute. Then $\BB_y$ is strongly absolute.
\end{lemma}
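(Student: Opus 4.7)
The statement as printed contains an evident typographical slip: since the hypothesis already says $\BB_y$ is strongly absolute, the conclusion cannot also be about $\BB_y$. The intended content is plainly that strong absoluteness is inherited \emph{upward} along domination, i.e., under the stated hypotheses $\BB_x$ is strongly absolute. I will plan the argument under this reading; the proof is essentially a one-line substitution once both definitions are unpacked.

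The first step is to unwind the domination hypothesis. Writing $\BB_x=(\xx_n)_{n\in\Nt}$ and $\BB_y=(\yy_n)_{n\in\Nt}$, domination supplies a constant $C>0$ and a bounded linear map $T\colon[\BB_x]\to\YY$ with $T(\xx_n)=\yy_n$ and $\Vert T\Vert\le C$. Consequently, for every $f=\sum_{n\in\Nt}a_n\xx_n\in[\BB_x]$ one has the pointwise identity of coefficients $\yy_n^*(Tf)=a_n=\xx_n^*(f)$ and the norm estimate $\Vert Tf\Vert_\YY\le C\Vert f\Vert_\XX$.

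The second step is to apply strong absoluteness of $\BB_y$ to the image $Tf$ with parameter $\varepsilon/C$, where $\varepsilon>0$ is arbitrary. This yields a constant $C(\varepsilon/C)$ for which
\[
\sum_{n\in\Nt}|\xx_n^*(f)|
=\sum_{n\in\Nt}|\yy_n^*(Tf)|
\le \max\left\{C(\varepsilon/C)\sup_{n\in\Nt}|\yy_n^*(Tf)|,\;\tfrac{\varepsilon}{C}\Vert Tf\Vert_\YY\right\}.
\]
Using $|\yy_n^*(Tf)|=|\xx_n^*(f)|$ in the first term and $\Vert Tf\Vert_\YY\le C\Vert f\Vert_\XX$ in the second, the right-hand side is dominated by $\max\{C(\varepsilon/C)\sup_n|\xx_n^*(f)|,\varepsilon\Vert f\Vert_\XX\}$. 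Setting $\widetilde{C}(\varepsilon):=C(\varepsilon/C)$ yields the inequality \eqref{eq:sb} for $\BB_x$, which is precisely the definition of strong absoluteness.

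There is no substantive obstacle: the one point that might trip up a hasty reading is that \eqref{eq:sb} is phrased as a \emph{maximum} rather than a sum, so that the rescaling $\varepsilon\mapsto\varepsilon/C$ cleanly absorbs the domination constant without an additive error term. This is exactly why the property transfers upward along $T$ in one stroke.
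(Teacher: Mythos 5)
Your reading of the misprint is right: the conclusion should be that $\BB_x$ is strongly absolute, which is exactly how the lemma is invoked in the proof of Proposition~\ref{prop:SADem} (there $\XX\subseteq\ell_r$ continuously, the canonical basis of $\ell_r$ is strongly absolute, and the conclusion is drawn for the basis of $\XX$). Your argument --- transporting $f$ to $Tf$, noting $\yy_n^*(Tf)=\xx_n^*(f)$, and applying \eqref{eq:sb} for $\BB_y$ with $\varepsilon/C$ in place of $\varepsilon$ so that $\Vert Tf\Vert\le C\Vert f\Vert$ is absorbed by the max --- is correct and is precisely the elementary proof the paper declines to write out.
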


To complement the theoretical contents of this section we shall introduce a quantitative tool from approximation theory that measures how far an unconditional basis is from the canonical $\ell_{1}$-basis. 

Given an unconditional basis $\BB$ of a quasi-Banach space $X$, its \textit{lower democracy function} is defined as
\[
\varphi_m^l[\BB]=\inf_{|A|\ge m} \Vert \Ind_A[\BB]\Vert, \quad m\in\NN.
\]
Note that if $\BB$ is strongly absolute then
\[
\lim\limits_{m\to \infty} \frac{1}{m} \varphi_m^l[\BB]=\infty.
\]
The following result establishes that, conversely, if $(\varphi_m^l[\BB])_{m=1}^\infty$ is sufficiently far away from the sequence $(m)_{m=1}^\infty$, then the basis $\BB$ is strongly absolute.

\begin{proposition}\label{prop:SADem}Let $\BB=(\xx_n)_{n=1}^\infty$ be an unconditional basis of a quasi-Banach space $\XX$. Suppose that there exists $0<p<1$ such that for some constant $0<C$ we have
\[
m^{1/p} \le C \varphi_m^l[\BB],\quad m\in\NN.
\]
Then $\BB$ is strongly absolute.
\end{proposition}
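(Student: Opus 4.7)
The plan is to reduce strong absoluteness to a single inequality of the form
\[
\Vert \Fou(f)\Vert_1 \lesssim \Vert \Fou(f)\Vert_\infty^{1-p}\, \Vert f\Vert^{p},\qquad f\in \XX,
\]
and then split cases on the ratio $\Vert \Fou(f)\Vert_\infty/\Vert f\Vert$ to extract the $\varepsilon$-dichotomy in \eqref{eq:sb}.

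First I would fix $f=\sum_n a_n\,\xx_n\in \XX$ and set $M=\Vert \Fou(f)\Vert_\infty=\sup_n|a_n|$ (assuming $M>0$, the case $M=0$ being trivial). Partition the support of $f$ into dyadic level sets
\[
A_k=\{n\in\NN \colon M\,2^{-k-1}<|a_n|\le M\,2^{-k}\},\qquad k\ge 0,
\]
and write $m_k=|A_k|$. Because $\BB$ is $K$-unconditional, on the one hand the coordinate projection $P_{A_k}$ satisfies $\Vert P_{A_k}\Vert\le K$, and on the other hand, since $M\,2^{-k-1}<|a_n|$ on $A_k$,
\[
M\,2^{-k-1}\,\Vert \Ind_{A_k}\Vert \le K\left\Vert \sum_{n\in A_k} a_n\,\xx_n\right\Vert \le K^{2}\,\Vert f\Vert.
\]
Combining this with the hypothesis $m_k^{1/p}\le C\,\varphi_{m_k}^l[\BB]\le C\,\Vert \Ind_{A_k}\Vert$ yields
\[
m_k\le \bigl(2CK^2\bigr)^{p}\,2^{kp}\,\frac{\Vert f\Vert^{p}}{M^{p}}.
\]

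Next I would sum the geometric series. Since $|a_n|\le M\,2^{-k}$ on $A_k$ and $p<1$,
\[
\sum_n |a_n|=\sum_{k\ge 0}\sum_{n\in A_k}|a_n|\le \sum_{k\ge 0} m_k\,M\,2^{-k}\le D\,M^{1-p}\,\Vert f\Vert^{p},
\]
with $D=(2CK^2)^{p}\sum_{k\ge 0}2^{k(p-1)}<\infty$. This is the key quantitative bound.

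Finally, given $\varepsilon>0$, set $\delta=(\varepsilon/D)^{1/(1-p)}$. If $M\le \delta\,\Vert f\Vert$ then $D\,M^{1-p}\,\Vert f\Vert^{p}\le \varepsilon\,\Vert f\Vert$, while if $M>\delta\,\Vert f\Vert$ then $\Vert f\Vert^{p}<\delta^{-p}M^{p}$ and so $D\,M^{1-p}\,\Vert f\Vert^{p}\le D\,\delta^{-p}\,M$. Hence
\[
\Vert \Fou(f)\Vert_1\le \max\bigl\{C(\varepsilon)\,\Vert \Fou(f)\Vert_\infty,\ \varepsilon\,\Vert f\Vert\bigr\},\qquad C(\varepsilon)=D\,\delta^{-p},
\]
which is precisely \eqref{eq:sb}. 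The one delicate point is the careful application of unconditionality to pass from $A_k$ having coefficients above $M\,2^{-k-1}$ to the norm estimate on $\Vert \Ind_{A_k}\Vert$; everything else is bookkeeping and a convergent geometric series that exploits $p<1$.
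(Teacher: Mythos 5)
Your argument is correct, but it takes a genuinely different, self-contained route from the paper's. The paper disposes of the proposition by citation: the democracy hypothesis yields a continuous embedding $\XX\subseteq\ell_{p,\infty}\subseteq\ell_r$ for any $r\in(p,1)$ (quoting a lemma from an earlier paper), the canonical basis of $\ell_r$ is known to be strongly absolute, and Lemma~\ref{lem:SADom} transfers strong absoluteness to a basis that dominates a strongly absolute one. Your dyadic level-set computation in effect unpacks those references: the bound $m_k\le(2CK^2)^p2^{kp}\Vert f\Vert^p/M^p$ is exactly the weak-$\ell_p$ estimate on the coefficient sequence, and the convergent geometric series (using $p<1$) replaces the passage through $\ell_r$. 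What your version buys is an explicit constant $C(\varepsilon)$ and independence from external lemmas; what the paper's buys is brevity. One small point you should add a sentence for: before writing $m_k=|A_k|$ and invoking $m_k^{1/p}\le C\varphi_{m_k}^l[\BB]$ you need each $A_k$ to be finite. This follows either from the semi-normalization of $\BB$ (so that the coefficients of $f$ tend to zero), or directly from your own inequality $M2^{-k-1}\Vert\Ind_A\Vert\le K^2\Vert f\Vert$ applied to arbitrarily large finite subsets $A\subseteq A_k$, which together with the democracy hypothesis forces $|A_k|<\infty$. With that remark inserted, every step checks out and the final dichotomy reproduces \eqref{eq:sb} exactly.
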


\begin{proof} We may regard $X$ as a sequence space whose basis $\BB$ is just the unit vector system.
Pick $r\in(p,1)$. By \cite{AADK2016}*{Lemma 6.1}(a),
\[
\XX\subseteq \ell_{p,\infty} \subseteq \ell_r
\]
continuously. Since the canonical basis of $\ell_r$ is strongly absolute (see \cite{Leranoz1992}*{Lemma 2.2}), by Lemma~\ref{lem:SADom} the proof is over.
\end{proof}

We will use Proposition~\ref{prop:SADem} to readily deduce that the following important examples of bases, which are permutatively equivalent to their square, are strongly absolute. 

\begin{example} Given $0<p_i<1$ for $i\in\NN[n]$, the canonical basis of the mixed norm space $\ell_{p_1}(\cdots\ell_{p_i}(\cdots(\ell_{p_n})))$ is unconditional, strongly absolute, and induces a structure of L-convex lattice on the whole space. \end{example} 

\begin{example}Let $d\in\NN$. The canonical basis $\BB$ of the Hardy spaces $H_p(\TT^d)$, $0<p<1$ (see \cite{KLW1990}) satisfies
\[
m^{1/p} \approx \varphi_m^l[\BB, H_p(\TT^d)],\quad m\in\NN.
\]
Hence, $\BB$ is strongly absolute.
\end{example}

\begin{example}\label{TribelLizEx} Given a dimension $d\in\NN$, let $\Theta_d=\{0,1\}^d \setminus\{ 0\}$ and consider the set of indices 
\[
\Lambda_d= \ZZ \times \ZZ^d \times \Theta_d.
\]
The homogeneous Triebel-Lizorkin sequence space $\ring{\tl}_{p,q}^{s,d}$ of indeces and $p\in (0,\infty)$ and $q\in (0,\infty]$ and smoothness $s\in\RR$ consists of all scalar sequences $f=(a_\lambda)_{\lambda\in\Lambda}$ for which
\[
\Vert f \Vert_{\tl_{p,q}^{s}}= \left\Vert \left(\sum_{j=-\infty}^\infty \sum_{\delta\in\Theta_d} \sum_{n\in\ZZ^d} 2^{jq(s+d/2)} |a_{j,n,\delta}|^q \chi_{Q(j,n)}\right)^{1/q}\right\Vert_p<\infty,
\]
were $Q(j,n)$ denotes the cube of length $2^{-j}$ whose lower vertex is $2^{-j}n$. If we restrict ourselves to non-negative ``levels'' $j$ and we add $\ell_p$ as a component we obtain the inhomegeneous Triebel-Lizorkin sequence spaces. To be precise, set
\[
\Lambda_d^+=\{(j,n,\delta)\in\Lambda_d \colon j\ge 0\},
\]
and define
\[
\tl_{p,q}^{s,d} =\ell_p(\ZZ^d) \oplus \{ f=(a_\lambda)_{\lambda\in\Lambda_d^+} \colon \Vert f \Vert_{\tl_{p,q}^{s}}<\infty \}.
\]
It is known that the wavelet transforms associated to certain wavelet bases normalized in the $L_{2}$-norm are isomorphisms from $F_{p,q}^s(\RR^d)$ (respectively $\ring{F}_{p,q}^s(\RR^d)$ onto $\tl_{p,q}^s(\RR^d)$ (resp., $\ring{\tl}_{p,q}^{s,d}$). See \cite{FrJaWe1991}*{Theorem 7.20} for the homegeneous case and \cite{TriebelIII2006}*{Theorem 3.5} for the inhomogenous case. Thus, Triebel-Lizorkin spaces are isomorphic to the corresponding sequence spaces, and the aforementioned wavelet bases (regarded as distributions on Triebel-Lizorkin spaces) are equivalent to the unit vector systems of the corresponding sequence spaces. 

A similar technique to the one used by Temlyakov in \cite{Temlyakov1998} to prove that the Haar system is a democratic basis for $L_p$ when $1<p<\infty$ allows us to prove that the unique vector system $\EE$ of $\ring{\tl}_{p,q}^{s,d}$ satisfies
\[
m^{1/p} \approx \varphi_m^l[\EE, \ring{\tl}_{p,q}^{s,d}],\quad m\in\NN.
\]
Consequently, if $p<1$, the unique vector system of both $\ring{\tl}_{p,q}^{s,d}$ and $\tl_{p,q}^{s,d}$ is a strongly absolute unconditional basis. \end{example} 

\begin{example}\label{pconvTsi} Given $0<p<\infty$, the \textit{$p$-convexified Tsirelson's space}, denoted ${\Ts}^{(p)}$, is obtained from $\Ts$ by putting 
\begin{equation}\label{Tsirelsonnorm}
\Vert x\Vert_{{\Ts}^{(p)}} = \Vert (|a_{n}|^{p})_{n=1}^{\infty}\Vert_{\Ts}^{1/p}
\end{equation}
for those sequences of real numbers $x= (a_{n})_{n=1}^{\infty}$ such that $(|a_{n}|^{p})_{n=1}^{\infty}\in {\Ts}$. 
Equation~\eqref{Tsirelsonnorm} defines a norm for $1\le p$ and a $p$-norm when $0<p<1$. Obviously, the space $({\Ts}^{(1)}, \Vert \cdot\Vert_{{\Ts}^{(1)}})$ is simply $({\Ts}, \Vert \cdot\Vert_{\Ts})$.

%The canonical unit vectors form a $1$-unconditional basis of ${\Ts}^{(p)}$ for all $0<p<\infty$, which is strongly absolute and permutatively equivalent to its square.
For $0<p<\infty$, the canonical basis $\EE$ of ${\Ts}^{(p)}$  is $1$-unconditional,  it is permutatively equivalent to its square, and satisfies
\[
m^{1/p} \approx \varphi_m^l[\EE, {\Ts}^{(p)}],\quad m\in\NN.
\]
Hence in particular if $0<p<1$, $\EE$ is strongly absolute.
\end{example} 

\section{Uniqueness of unconditional basis of sums of anti-Euclidean spaces}\label{DirectSumsUnc}

\noindent Our last application of Theorem~\ref{thm:keytechniquebis} establishes that the uniqueness of unconditional bases up to permutation of anti-Euclidean quasi-Banach spaces is preserved by \textit{finite} direct sums.

\begin{theorem}\label{thm:DirecSums} Let $(\XX_i)_{i\in F}$ be a finite family of quasi-Banach spaces whose Banach envelopes are anti-Euclidean. Suppose that for each $i\in F$, 
$\BB_i$ is an unconditional basis of $\XX_{i}$ such that
\begin{enumerate}
\item[(i)] The lattice structure induced by $\BB_{i}$ in $\XX_{i}$ is L-convex;
\item[(ii)] $\BB_{i}$ is universal for well complemented block basic sequences; and
\item[(iii)] $\BB_{i}\sim \BB_{i}^2$.
\end{enumerate}
Then the space $\bigoplus_{i\in F} \XX_i$ has a unique unconditional basis up to permutation.
\end{theorem}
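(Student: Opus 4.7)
The plan is to apply Theorem~\ref{thm:keytechniquebis} to $\XX:=\bigoplus_{i\in F}\XX_i$ equipped with the direct-sum basis $\BB:=\bigoplus_{i\in F}\BB_i$. Since $F$ is finite, we need to verify its three ingredient hypotheses ($L$-convexity of $\BB$, universality of $\BB$ for well complemented block basic sequences, and $\BB\sim\BB^2$) together with the standing anti-Euclideanness of the Banach envelope of $\XX$.

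Three of these conditions reduce to coordinate-wise verifications. The square-equivalence $\BB\sim\BB^2$ is immediate, since the $2$-fold product of the direct sum is, up to the obvious rearrangement, the direct sum of the $2$-fold products, and each $\BB_i\sim\BB_i^2$ by hypothesis~(iii). $L$-convexity transfers to finite $\ell_\infty$-direct sums thanks to Kalton's characterization of $L$-convexity as $p$-convexity for some $p>0$ \cite{Kalton1984b}: a $p$-convexity estimate with constant $C_i$ in each $\XX_i$ yields the corresponding estimate with constant $\max_i C_i$ in $\XX$. For the Banach envelope, one uses first that the envelope functor commutes with finite $\ell_\infty$-sums, so that $\widehat{\XX}=\bigoplus_{i\in F}\widehat{\XX}_i$, and then that anti-Euclideanness is preserved by such sums via a rank-splitting plus pigeonhole argument: if $\ell_2^n$ were uniformly complementedly embedded in $\widehat{\XX}$, decomposing the embedding and projection across the finitely many summands and using $\sum_i \mathrm{rank}(I_{n,i})\ge n$ would place a uniformly complemented $\ell_2^{\lceil n/|F|\rceil}$ in some $\widehat{\XX}_i$; pigeonhole on the finite set $F$ then forces a single index $i_0$ for which $\widehat{\XX}_{i_0}$ carries uniformly complemented $\ell_2^n$ for arbitrarily large $n$, contradicting hypothesis.

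The main obstacle is verifying that $\BB$ is universal for well complemented block basic sequences. Given such a sequence $(\uu_m)_{m\in\Mt}$ in $\XX$ together with good projecting functionals $(\uu_m^*)_{m\in\Mt}$, we decompose $\uu_m=\sum_{i\in F}\uu_m^{(i)}$ and $\uu_m^*=\sum_{i\in F}\uu_m^{*,(i)}$ coordinatewise. Because coordinate supports are pairwise disjoint, the cross-pairings $\uu_m^{*,(i)}(\uu_n^{(j)})$ vanish whenever $i\ne j$, reducing the biorthogonality relation to $\sum_{i\in F}\uu_m^{*,(i)}(\uu_n^{(i)})=\delta_{mn}$. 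Specializing to $m=n$ and invoking pigeonhole on $F$ furnishes an assignment $m\mapsto i(m)\in F$ with $|\uu_m^{*,(i(m))}(\uu_m^{(i(m))})|\ge 1/|F|$. Partitioning $\Mt=\bigsqcup_{j\in F}\Mt^{(j)}$ with $\Mt^{(j)}=\{m\in\Mt:i(m)=j\}$, each slice $(\uu_m^{(j)})_{m\in\Mt^{(j)}}$ becomes a semi-normalized block basic sequence in $\XX_j$, the lower bound $\|\uu_m^{(j)}\|\ge 1/(|F|\sup_m\|\uu_m^*\|)$ being forced by the preceding inequality. The failure of $(\uu_m^{*,(j)})$ to be exactly biorthogonal to $(\uu_n^{(j)})$ in $\XX_j$ is measured by the off-diagonal deficits $-\sum_{i\ne j}\uu_m^{*,(i)}(\uu_n^{(i)})$; a perturbation correction of the kind provided by \cite{AlbiacAnsorena2020}*{Lemma 3.1} replaces these approximately biorthogonal systems by genuine sequences of good projecting functionals in $\XX_j$. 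Applying the universality of each $\BB_j$ yields an injection $\pi_j\colon\Mt^{(j)}\to\{j\}\times\Nt_j$ with $\pi_j(m)\in\supp(\uu_m^{(j)})\subseteq\supp(\uu_m)$, and amalgamating these into $\pi\colon\Mt\to\Nt$ produces the required one-to-one map. The permutative equivalence $(\uu_m)_{m\in\Mt}\sim(\xx_{\pi(m)})_{m\in\Mt}$ is then assembled slice-by-slice, the $\ell_\infty$-structure of $\XX$ and unconditionality absorbing the subdominant components $\uu_m^{(i)}$ for $i\ne i(m)$.

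With all hypotheses of Theorem~\ref{thm:keytechniquebis} verified for the pair $(\XX,\BB)$, the theorem applies and delivers that $\bigoplus_{i\in F}\XX_i$ has a unique unconditional basis up to permutation.
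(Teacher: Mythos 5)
Your proposal follows the same top-level strategy as the paper: form the direct-sum basis $\BB=\bigoplus_{i\in F}\BB_i$ and check the hypotheses of Theorem~\ref{thm:keytechniquebis} for the pair $\bigl(\bigoplus_{i\in F}\XX_i,\BB\bigr)$. The difference is that the paper outsources the two nontrivial verifications to external results --- anti-Euclideanness of the envelope of a finite sum to \cite{CasKal1999}*{Proposition 2.4} combined with \cite{AlbiacAnsorena2020}*{Lemma 2.3}, and universality of $\BB$ for well complemented block basic sequences to \cite{AlbiacAnsorena2020}*{Proposition 3.4} --- whereas you sketch direct proofs. Your coordinatewise pigeonhole argument for universality (restrict each $\uu_m$ to a coordinate $i(m)$ carrying at least $1/|F|$ of the biorthogonal pairing, use \cite{AlbiacAnsorena2020}*{Lemma 3.1} to see that the restricted sequence is a well complemented block basic sequence in $\XX_{i(m)}$ equivalent to the original, then apply universality of $\BB_{i(m)}$ slice by slice and amalgamate over the finitely many slices) is sound and is essentially the content of the cited Proposition 3.4; it buys self-containedness at the cost of length. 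One caveat: in your argument that anti-Euclideanness passes to finite sums, the quantity to split across the summands is not the rank of the coordinate components but the trace of the factored identity $\Id_{\ell_2^n}=\sum_{i\in F}P_nJ_iQ_iI_n$, where $Q_i$ and $J_i$ are the coordinate projection onto and the inclusion of the $i$th summand. A component $Q_iI_n$ can have full rank while being arbitrarily badly invertible, so the inequality $\sum_i\operatorname{rank}(I_{n,i})\ge n$ does not by itself yield a uniformly complemented $\ell_2^{\lceil n/|F|\rceil}$ inside some $\widehat{\XX}_i$. Choosing $i$ with $\operatorname{tr}(P_nJ_iQ_iI_n)\ge n/|F|$ and passing to a proportional-dimensional subspace on which this norm-bounded operator is well invertible --- the Casazza--Kalton argument behind \cite{CasKal1999}*{Proposition 2.4} --- repairs this step; with that correction all hypotheses of Theorem~\ref{thm:keytechniquebis} are verified and the conclusion follows as you state.
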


\begin{proof} Combining \cite{CasKal1999}*{Proposition 2.4} and \cite{AlbiacAnsorena2020}*{Lemma 2.3} we see that the Banach envelope of $\XX=\bigoplus_{i\in F} \XX_i$ is anti-Euclidean. It is clear that the basis $\BB=\bigoplus_{i\in F} \BB_i$ is L-convex and permutatively equivalent to its square. By \cite{AlbiacAnsorena2020}*{Proposition 3.4}, $\BB$ is universal for well complemented block basic sequences. So, the result follows from Theorem~\ref{thm:keytechniquebis}.
\end{proof}

Merging the results from Sections~\ref{Sec:ApplAnti-Euclidean} and \ref{SectStrongAbs} with Theorem~\ref{thm:DirecSums} provides new additions to the list of spaces with unique unconditional basis up to a permutation.

\begin{corollary} Let $F$ be a finite set of indeces. Suppose that for each $i\in F$, $X_{i}$ is one of the following spaces:
\begin{itemize}
\item[(i)] $\ell_{\varphi}$, where $\varphi$ verifies \eqref{eq:Orlicz1} and \eqref{eq:OrliczK21}, in particular $\ell_{p}$ for $p\le 1$;
\item[(ii)] $d(\ww, p)$, where $\ww$ verifies \eqref{eq:lorentk2};
\item[(iii)] $\Ts$;
\item[(iv)] $\ell(p_{n})$, where $(p_{n})_{n=1}^{\infty}$ verifies the hypothesis of Theorem~\ref{Nakanouncthm};
\item[(v)] $\ell_{p_1}(\cdots\ell_{p_i}(\cdots(\ell_{p_n})))$, where $0<p_i<1$ for $i\in\NN[n]$;
\item[(vi)] $H_p(\TT^{d})$ for $d\in \NN$ and $0<p<1$; 
\item[(vii)] $\ring{\tl}_{p,q}^{s,d}$ or $\tl_{p,q}^{s,d}$ as in Example~\ref{TribelLizEx};
\item[(viii)] ${\Ts}^{(p)}$ for $0<p<1$.
\end{itemize}
Then $\XX=\bigoplus_{i\in F} \XX_i$ has a unique unconditional basis up to permutation.

\end{corollary}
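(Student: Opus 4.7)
The plan is to invoke Theorem~\ref{thm:DirecSums} directly: for each $i\in F$, I would take $\BB_i$ to be the canonical unconditional basis of $\XX_i$ and verify the three hypotheses (i)--(iii) together with the anti-Euclideanity of the Banach envelope of $\XX_i$. Each of these four facts has already been catalogued in Sections~\ref{Sec:ApplAnti-Euclidean} and \ref{SectStrongAbs}, so the proof is essentially a bookkeeping exercise: once the list is checked case by case, Theorem~\ref{thm:DirecSums} produces the uniqueness of unconditional basis up to permutation of $\bigoplus_{i\in F}\XX_i$.

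Concretely, for the anti-Euclideanity of the Banach envelope: in cases (i)--(ii) and (v)--(viii) the Banach envelope is $\ell_1$ (via Lemma~\ref{lem:BEl1} together with Proposition~\ref{prop:K2SA}(i) in the strongly absolute cases, and via the corresponding envelope results in Sections~\ref{ex:Orlicz} and \ref{ex:Lorentz}); in case (iii) the space $\Ts$ is itself anti-Euclidean by the right-dominance of its basis together with $r(\Ts)=1$; in case (iv) anti-Euclideanity follows from Corollary~\ref{cor:BNAE} once we note that the hypothesis on $(p_n)$ forces $\limsup_n p_n\le 1$ (the convergence of the series $\sum_n c^{1/|p_n-p_{\pi_i(n)}|}$ prevents $p_n$ from clustering above $1$ in the relevant way). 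For the L-convexity condition (i), everything is trivially covered: the Orlicz, Lorentz and Bourgin--Nakano spaces are $p$-convex by construction, $\Ts$ is even locally convex, the iterated $\ell_{p_1}(\cdots\ell_{p_n})$ spaces and $H_p(\TT^d)$ and the Triebel--Lizorkin spaces are $p$-convex for the appropriate $p$, and $\Ts^{(p)}$ is $p$-convex by definition.

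Condition (ii), namely that $\BB_i$ is universal for well complemented block basic sequences, is covered in each case either by the peaking property (via Proposition~\ref{prop:k2one}) or by a direct verification: Orlicz (Section~\ref{ex:Orlicz}, peaking), Lorentz (Section~\ref{ex:Lorentz}, peaking), $\Ts$ (\cite{CasKal1998}*{Theorem 5.6} applied to its right-dominant basis), Bourgin--Nakano (Proposition~\ref{prop:BNUWC}), and cases (v)--(viii) by Proposition~\ref{prop:K2SA}(ii) combined with Proposition~\ref{prop:k2one}, since their canonical bases are strongly absolute (established in Section~\ref{SectStrongAbs} through Proposition~\ref{prop:SADem} applied to the lower democracy function estimate $\varphi_m^l\approx m^{1/p}$). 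Condition (iii), that $\BB_i\sim\BB_i^2$, holds because the Orlicz, Lorentz and iterated-$\ell_p$ bases are symmetric; the Tsirelson and $p$-convexified Tsirelson bases are equivalent to their squares by \cite{CasKal1998}*{Proposition 5.5}; the Bourgin--Nakano case follows from Proposition~\ref{prop:BNSQ} applied to the summability hypothesis in Theorem~\ref{Nakanouncthm}; and for $H_p(\TT^d)$ and the Triebel--Lizorkin spaces, the equivalence with the square is a standard consequence of the self-similar structure of the respective wavelet/atomic decompositions.

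The main obstacle, as far as it deserves that name, is to match the hypotheses verbatim in the Bourgin--Nakano case (iv), where both the anti-Euclidean envelope condition and the equivalence with the square have to be extracted from the quantitative condition on $(p_n)$; once that is done, all other cases reduce to citing results established earlier in the paper. Having verified (i)--(iii) and anti-Euclideanity for each $\XX_i$, an application of Theorem~\ref{thm:DirecSums} finishes the proof.
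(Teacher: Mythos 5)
Your proposal is correct and follows exactly the route the paper intends: the paper offers no separate argument for this corollary beyond merging the case-by-case verifications of Sections~\ref{Sec:ApplAnti-Euclidean} and \ref{SectStrongAbs} with Theorem~\ref{thm:DirecSums}, which is precisely your bookkeeping scheme. Two small slips in your justifications are worth correcting, though neither affects the conclusion. First, in case (iv) you do not need to (and cannot) derive $\limsup_n p_n\le 1$ from the summability condition --- it is an explicit hypothesis of Theorem~\ref{Nakanouncthm}, and your parenthetical is false as stated (take $p_n=2$ and $p_{\pi_i(n)}=p_n$, so each term $c^{1/|p_n-p_{\pi_i(n)}|}$ vanishes while $\limsup_n p_n=2$). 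Second, the canonical basis of $\ell_{p_1}(\cdots(\ell_{p_n}))$ is \emph{not} symmetric when the exponents differ; the correct reason it is permutatively equivalent to its square is the natural identification of $\ell_{p_1}(\cdots)\oplus\ell_{p_1}(\cdots)$ with $\ell_{p_1}(\cdots)$ obtained by splitting the outermost index set into two infinite pieces, which carries the square of the canonical basis onto a permutation of the canonical basis.
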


%\bibliographystyle{abbrv} 
%\bibliography{AnsoBiblio} 
%\end{document}

\begin{bibsection}
\begin{biblist}

\bib{AlbiacAnsorena2020}{article}{
author={Albiac, F.},
author={Ansorena, J. L.},
title={Projections and unconditional bases in direct sums of $\ell_p$ spaces, $0<p\le \infty$},
journal={arXiv:1909.06829 [math.FA]}
}

\bib{AABW2019}{article}{
author={Albiac, F.},
author={Ansorena, J.~L.},
author={Bern\'a, P.},
author={Wojtaszczyk, P.},
title={Greedy approximation for biorthogonal systems in quasi-Banach spaces},
journal={arXiv:1903.11651 [math.FA]},
}

\bib{AADK2016}{article}{
author={Albiac, F.},
author={Ansorena, J.~L.},
author={Dilworth, S.~J.},
author={Kutzarova, D.},
title={Banach spaces with a unique greedy basis},
journal={J. Approx. Theory},
volume={210},
date={2016},
pages={80--102},
%issn={0021-9045},
%review={\MR{3532713}},
%doi={10.1016/j.jat.2016.06.005},
}

\bib{AlbiacKalton2016}{book}{
author={Albiac, F.},
author={Kalton, N. J.},
title={Topics in Banach space theory, 2nd revised and updated edition},
series={Graduate Texts in Mathematics},
volume={233},
publisher={Springer International Publishing},
%place={New York},
date={2016},
pages={xx+508},
}

\bib{AKL2004}{article}{
author={Albiac, F.},
author={Kalton, N. J.},
author={Ler\'{a}noz, C.},
title={Uniqueness of the unconditional basis of $\ell_1(\ell_p)$ and $\ell_p(\ell_1)$, $0<p<1$},
journal={Positivity},
volume={8},
date={2004},
number={4},
pages={443--454},
%issn={1385-1292},
%review={\MR{2117671}},
%doi={10.1007/s11117-003-8542-z},
}

\bib{AlbiacLeranoz2008}{article}{
author={Albiac, F.},
author={Ler\'{a}noz, C.},
title={Uniqueness of unconditional basis in Lorentz sequence spaces},
journal={Proc. Amer. Math. Soc.},
volume={136},
date={2008},
number={5},
pages={1643--1647},
%issn={0002-9939},
%review={\MR{2373593}},
%doi={10.1090/S0002-9939-08-09222-8},
}

\bib{AlbiacLeranoz2010}{article}{
author={Albiac, F.},
author={Ler\'{a}noz, C.},
title={An alternative approach to the uniqueness of unconditional basis
of $\ell_p(c_0)$ for $0<p<1$},
journal={Expo. Math.},
volume={28},
date={2010},
number={4},
pages={379--384},
% issn={0723-0869},
% review={\MR{2734453}},
% doi={10.1016/j.exmath.2010.03.004},
}

\bib{AlbiacLeranoz2011}{article}{
author={Albiac, F.},
author={Ler\'{a}noz, C.},
title={Uniqueness of unconditional bases in nonlocally convex
$\ell_1$-products},
journal={J. Math. Anal. Appl.},
volume={374},
date={2011},
number={2},
pages={394--401},
% issn={0022-247X},
% review={\MR{2729229}},
% doi={10.1016/j.jmaa.2010.09.048},
}

\bib{AlbiacLeranoz2011b}{article}{
author={Albiac, F.},
author={Ler\'{a}noz, C.},
title={Uniqueness of unconditional bases in nonlocally convex $c_0$-products},
journal={Israel J. Math.},
volume={184},
date={2011},
pages={79--91},
% issn={0021-2172},
% review={\MR{2823970}},
% doi={10.1007/s11856-011-0060-2},
}

\bib{BCLT1985}{article}{
author={Bourgain, J.},
author={Casazza, P.~G.},
author={Lindenstrauss, J.},
author={Tzafriri, L.},
title={Banach spaces with a unique unconditional basis, up to permutation},
journal={Mem. Amer. Math. Soc.},
volume={54},
date={1985},
number={322},
pages={iv+111},
%issn={0065-9266},
%review={\MR{782647}},
%doi={10.1090/memo/0322},
}

\bib{Bourgin1943}{article}{
author={Bourgin, D.G.},
title={Linear topological spaces},
journal={Amer. J. Math.},
volume={65},
date={1943},
pages={637--659},
%issn={0002-9327},
%review={\MR{0009104 (5,103a)}},
}

\bib{Casazza1989}{article}{
author={Casazza, P. G.},
title={The Schroeder-Bernstein property for Banach spaces},
conference={
title={Banach space theory},
address={Iowa City, IA},
date={1987},
},
book={
series={Contemp. Math.},
volume={85},
publisher={Amer. Math. Soc., Providence, RI},
},
date={1989},
pages={61--77},
% review={\MR{983381}},
% doi={10.1090/conm/085/983381},
}

\bib{CasKal1998}{article}{
author={Casazza, P.~G.},
author={Kalton, N.~J.},
title={Uniqueness of unconditional bases in Banach spaces},
journal={Israel J. Math.},
volume={103},
date={1998},
pages={141--175},
%issn={0021-2172},
%review={\MR{1613564}},
%doi={10.1007/BF02762272},
}

\bib{CasKal1999}{article}{
author={Casazza, P.~G.},
author={Kalton, N.~J.},
title={Uniqueness of unconditional bases in $c_0$-products},
journal={Studia Math.},
volume={133},
date={1999},
number={3},
pages={275--294},
%issn={0039-3223},
%review={\MR{1687211}},
}

\bib{CasShu1989}{book}{
author={Casazza, P.~G.},
author={Shura, T.~J.},
title={Tsirel\cprime son's space},
series={Lecture Notes in Mathematics},
volume={1363},
%note={With an appendix by J. Baker, O. Slotterbeck and R. Aron},
publisher={Springer-Verlag, Berlin},
date={1989},
pages={viii + 204},
%isbn={3-540-50678-0},
%review={\MR{981801}},
%doi={10.1007/BFb0085267},
}

\bib{EdelWoj1976}{article}{
author={{\`E}del{\cprime}{\v{s}}te{\u\i}n, I. S.},
author={Wojtaszczyk, P.},
title={On projections and unconditional bases in direct sums of Banach
spaces},
journal={Studia Math.},
volume={56},
date={1976},
number={3},
pages={263--276},
% issn={0039-3223},
% review={\MR{0425585 (54 \#13539)}},
}

\bib{FrJaWe1991}{book}{
author={Frazier, M.},
author={Jawerth, B.},
author={Weiss, G.},
title={Littlewood-Paley theory and the study of function spaces},
series={CBMS Regional Conference Series in Mathematics},
volume={79},
publisher={Published for the Conference Board of the Mathematical Sciences, Washington, DC; by the American Mathematical Society, Providence, RI},
date={1991},
%pages={viii+132}, 
% isbn={0-8218-0731-5},
% review={\MR{1107300 (92m:42021)}},
}

\bib{GowMau1997}{article}{
author={Gowers, W.~T.},
author={Maurey, B.},
title={Banach spaces with small spaces of operators},
journal={Math. Ann.},
volume={307},
date={1997},
number={4},
pages={543--568},
%issn={0025-5831},
%review={\MR{1464131}},
%doi={10.1007/s002080050050},
}

\bib{Hall1948}{article}{
author={Hall, M., Jr.},
title={Distinct representatives of subsets},
journal={Bull. Amer. Math. Soc.},
volume={54},
date={1948},
pages={922--926},
%issn={0002-9904},
%review={\MR{27033}},
%doi={10.1090/S0002-9904-1948-09098-X},
}

\bib{Hennefeld1973}{article}{
author={Hennefeld, J.},
title={On nonequivalent normalized unconditional bases for Banach spaces},
journal={Proc. Amer. Math. Soc.},
volume={41},
date={1973},
pages={156--158},
%issn={0002-9939},
% review={\MR{0320716 (47 \#9251)}},
}

\bib{Kalton1977}{article}{
author={Kalton, N. J.},
title={Orlicz sequence spaces without local convexity},
journal={Math. Proc. Cambridge Philos. Soc.},
volume={81},
date={1977},
number={2},
pages={253--277},
%issn={0305-0041},
%review={\MR{0433194}},
%doi={10.1017/S0305004100053342},
}

\bib{Kalton1984b}{article}{
author={Kalton, N. J.},
title={Convexity conditions for nonlocally convex lattices},
journal={Glasgow Math. J.},
volume={25},
date={1984},
number={2},
pages={141--152},
%issn={0017-0895},
%review={\MR{752808}},
%doi={10.1017/S0017089500005553},
}

\bib{KLW1990}{article}{
author={Kalton, N. J.},
author={Ler\'{a}noz, C.},
author={Wojtaszczyk, P.},
title={Uniqueness of unconditional bases in quasi-Banach spaces with applications to Hardy spaces},
journal={Israel J. Math.},
volume={72},
date={1990},
number={3},
pages={299--311 (1991)},
%issn={0021-2172},
%review={\MR{1120223}},
%doi={10.1007/BF02773786},
}

\bib{KPR1984}{book}{
author={Kalton, N.~J.},
author={Peck, N.~T.},
author={Roberts, J.~W.},
title={An $F$-space sampler},
series={London Mathematical Society Lecture Note Series},
volume={89},
publisher={Cambridge University Press},
place={Cambridge},
date={1984},
}

\bib{KotheToeplitz1934}{article}{
author={K\"othe, G.},
author={Toeplitz, O.},
title={Lineare Raume mit unendlich vielen Koordinaten und Ringen unendlicher Matrizen},
journal={J. Reine Angew Math.},
volume={171},
date={1934},
pages={193\ndash 226},
}

\bib{Leranoz1992}{article}{
author={Ler\'{a}noz, C.},
title={Uniqueness of unconditional bases of $c_0(\ell_p)$, $0<p<1$},
journal={Studia Math.},
volume={102},
date={1992},
number={3},
pages={193--207},
%issn={0039-3223},
%review={\MR{1170550}},
}

\bib{LinPel1968}{article}{
author={Lindenstrauss, J.},
author={Pe{\l}czy{\'n}ski, A.},
title={Absolutely summing operators in $L_{p}$-spaces and their
applications},
journal={Studia Math.},
volume={29},
date={1968},
pages={275--326},
% issn={0039-3223},
% review={\MR{0231188 (37 \#6743)}},
}

%\bib{LinTza1971a}{article}{
%author={Lindenstrauss, J.},
%author={Tzafriri, L.},
%title={On Orlicz sequence spaces},
%journal={Israel J. Math.},
%volume={10},
%date={1971},
%pages={379--390},
% issn={0021-2172},
% review={\MR{0313780 (47 \#2334)}},
%}

\bib{LinZip1969}{article}{
author={Lindenstrauss, J.},
author={Zippin, M.},
title={Banach spaces with a unique unconditional basis},
journal={J. Functional Analysis},
volume={3},
date={1969},
pages={115--125},
% review={\MR{0236668 (38 \#4963)}},
}

\bib{Nakano1950}{book}{
author={Nakano, H.},
title={Modulared Semi-Ordered Linear Spaces},
publisher={Maruzen Co., Ltd., Tokyo},
date={1950},
pages={i+288},
%review={\MR{0038565}},
}

\bib{Nakano1951}{article}{
author={Nakano, H.},
title={Modulared sequence spaces},
journal={Proc. Japan Acad.},
volume={27},
date={1951},
pages={508--512},
%issn={0021-4280},
%review={\MR{0047929 (13,954g)}},
}

\bib{Pel1969}{article}{
author={Pe{\l}czy{\'n}ski, A.},
title={Universal bases},
journal={Studia Math.},
volume={32},
date={1969},
pages={247--268},
%issn={0039-3223},
%review={\MR{0126145 (23 \#A3441)}},
}

\bib{Temlyakov1998}{article}{
author={Temlyakov, V. N.},
title={The best $m$-term approximation and greedy algorithms},
journal={Adv. Comput. Math.},
volume={8},
date={1998},
number={3},
pages={249--265},
%issn={1019-7168},
%review={\MR{1628182}},
%doi={10.1023/A:1018900431309},
}

\bib{TriebelIII2006}{book}{
author={Triebel, H.},
title={Theory of function spaces. III},
series={Monographs in Mathematics},
volume={100},
publisher={Birkh\"{a}user Verlag, Basel},
date={2006},
pages={xii+426},
%isbn={978-3-7643-7581-2},
%isbn={3-7643-7581-7},
%review={\MR{2250142}},
}

\bib{Woj1997}{article}{
author={Wojtaszczyk, P.},
title={Uniqueness of unconditional bases in quasi-Banach spaces with
applications to Hardy spaces. II},
journal={Israel J. Math.},
volume={97},
date={1997},
pages={253--280},
%issn={0021-2172},
%review={\MR{1441252}},
%doi={10.1007/BF02774040},
}

\bib{Wojto1988}{article}{
author={W\'{o}jtowicz, M.},
title={On the permutative equivalence of unconditional bases in $F$-spaces},
journal={Funct. Approx. Comment. Math.},
volume={16},
date={1988},
pages={51--54},
%issn={0208-6573},
%review={\MR{965366}},
}

\end{biblist}
\end{bibsection}
%-----------------------

%------------------------------------------------------------------------
\end{document}